\newtheorem{theorem}{Theorem}[section]
\newtheorem{fact}[theorem]{Fact}
\newtheorem{lemma}[theorem]{Lemma}
\newtheorem{corollary}[theorem]{Corollary}
\newtheorem{proposition}[theorem]{Proposition}
\theoremstyle{definition}
\newtheorem{definition}[theorem]{Definition}
\newtheorem{example}[theorem]{Example}
\newtheorem{remark}[theorem]{Remark}
\newcommand{\abar}{\bar{a}}
\newcommand{\bbar}{\bar{b}}
\newcommand{\cbar}{\bar{c}}
\newcommand{\dbar}{\bar{d}}
\newcommand{\mbar}{\bar{m}}
\newcommand{\nbar}{\bar{n}}
\newcommand{\ubar}{\bar{u}}
\newcommand{\xbar}{\bar{x}}
\newcommand{\ybar}{\bar{y}}
\newcommand{\zbar}{\bar{z}}
\newcommand{\claim}{\hfill$\dashv_{\text{\scriptsize{claim}}}$}
\def\eq{\operatorname{eq}}
\def\seq{\subseteq}
\def\nv{\text{-}}
\def\inv{^{\text{-}1}}
\def\Th{\operatorname{Th}}
\def\N{\mathbb{N}}
\def\R{\mathbb{R}}
\def\Z{\mathbb{Z}}
\def\Q{\mathbb{Q}}
\def\C{\mathbb{C}}
\def\K{\mathbb{K}}
\def\cL{\mathcal{L}}
\def\cA{\mathcal{A}}
\def\cB{\mathcal{B}}
\def\cG{\mathcal{G}}
\def\cZ{\mathcal{Z}}
\def\cK{\mathcal{K}}
\def\cM{\mathcal{M}}
\def\cN{\mathcal{N}}
\def\cU{\mathcal{U}}
\def\acl{\operatorname{acl}}
\def\qf{\operatorname{qf}}
\def\alg{\operatorname{alg}}
\def\tp{\operatorname{tp}}
\def\Ind{\setbox0=\hbox{$x$}\kern\wd0\hbox to 0pt{\hss$\mid$\hss}
\lower.9\ht0\hbox to 0pt{\hss$\smile$\hss}\kern\wd0}
\def\Notind{\setbox0=\hbox{$x$}\kern\wd0\hbox to 0pt{\mathchardef
\nn=12854\hss$\nn$\kern1.4\wd0\hss}\hbox to
0pt{\hss$\mid$\hss}\lower.9\ht0 \hbox to 0pt{\hss$\smile$\hss}\kern\wd0}
\def\ind{\mathop{\mathpalette\Ind{}}}
\title{Weakly minimal groups with a new predicate}
\date{November 25, 2019}
\author[G. Conant]{Gabriel Conant}
\address{Department of Mathematics\\
University of Notre Dame\\
Notre Dame, IN, 46656\\
 USA}
\email{gconant@nd.edu}
\author[M. C. Laskowski]{Michael C. Laskowski}
\address{Department of Mathematics\\
University of Maryland\\
College Park, MD 20742\\
 USA}
\email{mcl@math.umd.edu}
\begin{document}

\begin{abstract}
Fix a weakly minimal (i.e., superstable $U$-rank $1$) structure $\cM$. Let $\cM^*$ be an expansion by constants for an elementary substructure, and let $A$ be an arbitrary subset of the universe $M$. We show that all formulas in the expansion $(\cM^*,A)$ are equivalent to bounded formulas, and so $(\cM,A)$ is stable (or NIP) if and only if the $\cM$-induced structure $A_{\cM}$ on $A$ is stable (or NIP). We then restrict to the case that $\cM$ is a pure abelian group with a weakly minimal theory, and $A_{\cM}$ is mutually algebraic (equivalently, weakly minimal with trivial forking). This setting encompasses most of the recent research on stable expansions of $(\Z,+)$. Using various characterizations of mutual algebraicity, we give  new examples of stable structures of the form $(\cM,A)$. Most notably, we show that if $(G,+)$ is a weakly minimal additive subgroup of the algebraic numbers, $A\seq G$ is enumerated by a homogeneous linear recurrence relation with algebraic coefficients, and no repeated root of the characteristic polynomial of $A$ is a root of unity, then $(G,+,B)$ is superstable for any $B\seq A$.  
\end{abstract}

\maketitle

\vspace{-27pt}

\section{Introduction}

Given a structure $\cM$, and a set $A\seq M$, a common line of investigation concerns model-theoretic properties of $\cM$ that are preserved in the expansion $(\cM,A)$ of $\cM$ by a unary predicate naming $A$. In this situation, the \emph{$\cM$-induced structure on $A$}, denoted $A_{\cM}$ (see Definition \ref{def:induced}), is interpretable in $(\cM,A)$ and so model theoretic complexity in $A_{\cM}$ persists in $(\cM,A)$. Altogether, a fundamental question is when model theoretic tameness present in both $\cM$ and $A_{\cM}$ is also present in $(\cM,A)$. In \cite{CaZi}, Casanovas and Ziegler define the notion of a set $A\seq M$ that is \emph{bounded in $\cM$} (see Definition \ref{def:bounded}), which is a certain ``quantifier organization" property of formulas in the expansion $(\cM,A)$, and they show that if $A$ is bounded in $\cM$ then $(\cM,A)$ is stable if and only if $\cM$ and $A_{\cM}$ are stable. The analogous result for NIP was shown by Chernikov and Simon \cite{ChSiDP1}. 

A notable instance of the situation above concerns expansions of the complex field $(\C,+,\cdot)$ by a finite rank subgroup $\Gamma$ of a semi-abelian variety. In this setting, \emph{Lang's conjecture} (now a theorem of Faltings and Vojta) is equivalent to the statement  that $(\C,+,\cdot,\Gamma)$ is stable and $\Gamma_{(\C,+,\cdot)}$ is $1$-based. This equivalence is explained by Pillay in \cite{PiLC}, and also describes the model-theoretic ingredients of Hrushovski's \cite{HrMLFF} proof of Mordell-Lang for function fields. A consequence of Pillay's work is that if $\cM$ is strongly minimal, then any $A\seq M$ is bounded in $\cM$ (see \cite[Corollary 5.4]{CaZi}).

Drawing from results of Poizat \cite{Poizpairs} on ``beautiful pairs" of models of a stable theory, Casanovas and Ziegler \cite{CaZi} also isolate the more semantic notion of a \emph{small} set in $\cM$ (see Definition \ref{def:small}), and show that if $\cM$ is stable \emph{and} has nfcp, then small sets are bounded. Altogether, this yields a strategy for proving stability (or NIP)  of an expansion $(\cM,A)$ of an nfcp structure $\cM$: one first shows that $A$ is small in $\cM$ and then that $A_{\cM}$ is stable (or NIP). This strategy has been recently used by Palac\'{i}n and Sklinos \cite{PaSk}, the first author \cite{CoSS,CoGG}, and Lambotte and Point \cite{LaPo} to produce new families of stable expansions of $(\Z,+)$. 

The first main result of this paper is that if $\cM$ is \emph{weakly minimal} (i.e., superstable of $U$-rank $1$),  then \emph{any} subset of $M$ is bounded in the expansion of $\cM$ by constants for some $\cM_0\preceq\cM$ (see Theorem \ref{thm:wmt-bounded}). This generalizes Pillay's result on strongly minimal structures (modulo the extra constants for $\cM_0$, which are necessary; see Remark \ref{rem:assumptions}), and eliminates the role of smallness in the strategy for preserving stability used in \cite{CoSS}, \cite{CoGG}, \cite{LaPo}, and \cite{PaSk}. We also obtain the following conclusion about expansions of weakly minimal structures by unary predicates.

\newtheorem*{thm:wmt-main}{Theorem \ref{thm:wmt-main}}
\begin{thm:wmt-main}
Suppose $\cM$ is weakly minimal and $A\seq M$.
\begin{enumerate}[$(a)$]
\item If $A_{\cM}$ is stable of $U$-rank $\alpha$ then $(\cM,A)$ is stable of $U$-rank at most $\alpha\cdot\omega$. 
\item If $A_{\cM}$ is NIP then $(\cM,A)$ is NIP.
\end{enumerate}
\end{thm:wmt-main}

In the rest of the paper, we focus on expansions of pure abelian groups $\cG=(G,+)$ whose theory is weakly minimal (see Proposition \ref{prop:wmagchar} for an algebraic characterization of such groups). Given $A\seq G$,  the induced structure $A_{\cG}$ decouples into two parts, namely, the \emph{quantifier-free} induced structure $A^{\qf}_{\cG}$ (see Definition \ref{def:indqf}), along with unary predicates $A\cap nG$ for all $n\geq 1$. The focus of \cite{LaPo} and \cite{PaSk}  is on expansions of $\cZ=(\Z,+)$ by sets $A\seq\Z$ that are eventually periodic modulo any fixed $n\geq 1$, which implies good behavior for these extra unary predicates.  However, as observed by the first author in \cite{CoSS,CoGG},  the sets $A\seq\Z$ considered in \cite{CoSS}, \cite{CoGG}, \cite{LaPo}, and \cite{PaSk} all have the property that \emph{any} expansion of $A^{\qf}_{\cZ}$ by unary predicates is stable, and so this extra assumption of periodicity is unnecessary.  In the present paper, we isolate a model-theoretic setting for this phenomenon. Specifically, we consider  \emph{mutually algebraic structures}, which were defined by the second author in \cite{LaskMAS}, and shown to satisfy the property that any expansion by unary predicates is stable and has nfcp. In Section \ref{sec:WMAG}, we prove the following result.

\newtheorem*{thm:MAG-main}{Theorem \ref{thm:MAG-main}}
\begin{thm:MAG-main}
Let $\cG=(G,+)$ be a weakly minimal abelian group. 
Fix $A\seq G$, and suppose $A_{\cG}^{\qf}$ is mutually algebraic.  Then, for any finite $F\subset G$ and any $B\seq A+F$, $(\cG,B)$ is  superstable of $U$-rank at most $\omega$. Moreover, if $\cG$ has finite torsion then $(\cG,B)$ has nfcp; and if $\cG=(\Z,+)$ and $B$ is infinite then $(\cG,B)$ has $U$-rank $\omega$.  
\end{thm:MAG-main}

Preservation of nfcp in Theorem \ref{thm:MAG-main}, is obtained via a result of Casanovas and Ziegler \cite{CaZi} involving small sets (see Proposition \ref{prop:CZsmall}). In order to apply \cite{CaZi}, we show  that if $\cG=(G,+)$ is a weakly minimal abelian group with finite torsion, and $A\seq G$ is such that $A_{\cG}$ is mutually algebraic, then $A$ is small in $\cG$ (see Corollary \ref{cor:interpretG}). This result is also interesting in light of the fact that smallness of $A$ and mutual algebraicity of $A_{(\Z,+)}$ are given as separate arguments in the work on stable expansions of the form $(\Z,+,A)$ from \cite{CoSS}, \cite{CoGG}, \cite{LaPo}, and \cite{PaSk}.

Finally, in Section \ref{sec:Zexp}, we use Theorem \ref{thm:MAG-main} to find new examples of stable expansions of weakly minimal abelian groups. In particular, we show that if $\cG=(G,+)$ is a weakly minimal abelian group, $A$ is a subset of $G$, and one of the following situations holds, then $A_{\cG}^{\qf}$ is mutually algebraic and so   Theorem \ref{thm:MAG-main} applies.

\begin{enumerate}
\item [\textbf{\S\ref{sec:Kepler}.}] 
$\cG$ is a subgroup of $(\C,+)$, $A=\{a_n\}_{n=0}^\infty$, and either  $\lim_{n\to\infty}|\frac{a_{n+1}}{a_n}|=\infty$ or $\lim_{n\to\infty}\frac{a_{n+1}}{a_n}=\tau$ for some transcendental $\tau\in\C$ with $|\tau|>1$.

\item [\textbf{\S\ref{sec:FGM}.}] 
$\cG$ is an additive subgroup of an algebraically closed field $\K$ of characteristic $0$, and $A$ is contained in a finite rank multiplicative subgroup of $\K^*$.

\item [\textbf{\S\ref{sec:LRR}.}] 
$\cG$ is a subgroup of $(\Q^{\alg},+)$, and $A$ is enumerated by a linear homogeneous recurrence relation with constant coefficients whose characteristic polynomial  has no repeated root that is also a root of unity.
\end{enumerate}

The examples in Section \ref{sec:Kepler} generalize certain families of ``sparse sets" considered in \cite{CoSS}, \cite{LaPo}, and \cite{PaSk}. In this case, we use methods similar to Lambotte and Point \cite{LaPo} to show that $A^{\qf}_{\cG}$ is interdefinable with $A$ in the language of equality. 

The examples in Section \ref{sec:FGM} generalize work of the first author from \cite{CoGG}, and complement many existing results about expansions of the \emph{field} $(\C,+,\cdot)$ by finite rank multiplicative subgroups (e.g., Belegradek \& Zilber \cite{BeZi}, and Van den Dries \& G\"{u}nayd{\i}n \cite{vdDGu}). In this case, we use a number-theoretic result of Everste, Schlickewei, and Schmidt \cite{ESS} to give an extremely quick proof that $A^{\qf}_{\cG}$ is mutually algebraic. This proof also highlights a pleasing parallel between the original definition of mutual algebraicity from \cite{LaskMAS} and the behavior of solutions of linear equations which lie in a finite rank multiplicative group.

Section \ref{sec:LRR} provides a significant generalization of the examples from \cite{CoSS} and \cite{LaPo} of stable structures of the form $(\Z,+,A)$, where $A$ is enumerated by a homogeneous linear recurrence relation. These sources impose fairly restrictive assumptions such as irreducibility of the characteristic polynomial $p(x)$ of the recurrence. In particular, stability of $(\Z,+,A)$ even when $p(x)$ is separable was open. Here we consider a weakly minimal subgroup $\cG$ of $(\Q^{\alg},+)$, and a set $A\seq G$ enumerated by a recurrence relation whose characteristic polynomial $p(x)$ has no roots of unity as repeated roots. To prove  $A^{\qf}_{\cG}$ is mutually algebraic, we first exhibit mutual algebraicity of an auxiliary structure $\N^{\Phi}_{\cK}$, formulated using a number field over which $A$ is defined, and then show that $A^{\qf}_{\cG}$ is suitably interpreted in $\N^{\Phi}_{\cK}$. The proof of mutual algebraicity for $\N^{\Phi}_{\cK}$ uses a new characterization of mutual algebraic structures, due to the second author and Terry \cite{LasTe1}, as well as a quantitative version of work of M. Laurent \cite{MLaur1,MLaur2}, due to Schlickwei and Schmidt \cite{SchSchPEE}, on the number of solutions to polynomial-exponential equations over number fields. Finally, in Theorem \ref{thm:LRRC}, we give a more direct proof in the case that $p(x)$ is separable, which also applies to any algebraically closed field of characteristic $0$.

\subsection*{Acknowledgements} This work began while both authors were participants in the Model Theory, Combinatorics and Valued Fields trimester program at Insitut Henri Poincar\'{e} in Spring 2018. We thank IHP for their hospitality. We also thank Tom Scanlon for suggesting the work of M. Laurent, and the anonymous referee for valuable revisions and suggestions. The second author was partially supported by NSF grant DMS-1308546. 

\section{Bounded sets in weakly minimal theories}\label{sec:WMG}

 Throughout this section, let $T$ be a complete theory with infinite models in some language $\cL$.  Given $\cM\models T$, when we say that a set $X\seq M^n$ is \emph{$\cM$-definable}, we mean definable with parameters from $M$. Let $\cL(P)=\cL\cup\{P\}$ where $P$ is a unary relation symbol not in $\cL$. Given $\cM\models T$ and $A\seq M$, let $(\cM,A)$ be the $\cL(P)$-structure expanding $\cM$ in which $P$ is interpreted as $A$. 
 
 \begin{definition}\label{def:bounded}$~$
 \begin{enumerate}
\item  An $\cL(P)$-formula $\phi(x_1,\ldots,x_n)$ is \textbf{bounded} if it is of the form
 \[
 Q_1 y_1\in P\ldots Q_m y_m\in P\, \psi(x_1,\ldots,x_n,y_1,\ldots,y_m)
 \]
 for some quantifiers $Q_1,\ldots,Q_m$ and some $\cL$-formula $\psi(\xbar,\ybar)$.
 
\item Given $\cM\models T$,  a set $A\seq M$ is \textbf{bounded in $\cM$} if every $\cL(P)$-formula is equivalent, modulo $\Th(\cM,A)$, to a bounded $\cL(P)$-formula. 
\end{enumerate}
 \end{definition}

\begin{definition}\label{def:induced} 
Given $\cM\models T$ and a sort $S$ from $\cL$, let $\cL^{\cM}_S$ denote the relational language containing an $n$-ary relation $R_X$ of sort $S^n$, for any $n\geq 1$ and any $\cM$-definable $X\seq (M^S)^n$. Given $A\seq M^S$, let $A_{\cM}$ denote the $\cL_S^{\cM}$-structure, with universe $A$, in which each symbol $R_X$ is interpreted as $A^n\cap X$. We call $A_{\cM}$ the \textbf{$\cM$-induced structure on $A$}. 
\end{definition}

The following is Proposition 3.1 of \cite{CaZi}. 
  
 \begin{proposition}[Casanovas \& Ziegler]\label{prop:CZ-bounded}
 Fix $\cM\models T$ and $A\seq M$. If $A$ is bounded in $\cM$, then $(\cM,A)$ is stable if and only if $\cM$ and $A_{\cM}$ are stable. 
\end{proposition}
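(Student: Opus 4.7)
The forward direction is routine. Since $\cM$ is a reduct of $(\cM,A)$, stability of the latter gives stability of the former. Moreover, $A_\cM$ is interpretable in $(\cM,A)$ (with universe $P$): each symbol $R_X$ of $\cL^{\cM}_S$ is interpreted on $A$ as $X \cap P^n$, which is $\cL(P)$-definable, so stability of $(\cM,A)$ entails stability of $A_\cM$ as well.

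For the converse, assume $\cM$ and $A_\cM$ are $\kappa$-stable for some $\kappa \geq |T|$, and let $B \seq M$ with $|B| \leq \kappa$. To bound $|S_n^{(\cM,A)}(B)|$ by $\kappa$, I would exploit the boundedness hypothesis to reduce to bounded formulas. For a bounded formula
\[
\phi(\xbar,\bbar) = Q_1 y_1 \in P \cdots Q_m y_m \in P\, \psi(\xbar,\ybar,\bbar),
\]
the truth value of $\phi(\abar,\bbar)$ in $(\cM,A)$ is determined by the \emph{trace} $D_\psi(\abar,\bbar) := \psi(\abar,\cM^{|\ybar|},\bbar) \cap A^{|\ybar|}$, which is $\emptyset$-definable in $A_\cM$ (it is the relation $R_X$ for the $\cM$-definable set $X = \psi(\abar,\cM^{|\ybar|},\bbar)$). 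Hence $\tp_{(\cM,A)}(\abar/B)$ is controlled jointly by $\tp^{\cL}_{\cM}(\abar/B)$ and the family of traces $\{D_\psi(\abar,\bbar)\}_{\psi,\,\bbar\in B^{<\omega}}$.

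I would then count. There are at most $\kappa$ choices for $\tp^{\cL}_{\cM}(\abar/B)$, by $\kappa$-stability of $\cM$. For each fixed $\cL$-type, the remaining freedom lives in the trace families. Using $\kappa$-stability of $A_\cM$, I would pick a support $A_0 \seq A$ of cardinality $\leq \kappa$ such that every trace $D_\psi(\abar,\bbar)$ is determined, up to $A_\cM$-type, by its restriction to $A_0$; this bounds the number of admissible trace families by $\kappa$, and multiplying yields the desired estimate $|S_n^{(\cM,A)}(B)|\leq\kappa$.

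The main obstacle is making this ``localization to $A_0$'' step precise: showing that $\kappa$-stability of $A_\cM$ really compresses the potentially large family of traces into data over a small support. One route is to pass to $A_\cM^{\eq}$ and work with canonical parameters for the definable sets $D_\psi(\abar,\bbar)$, noting that tuples with the same canonical parameters over $A_0$ yield the same trace on all of $A$. An alternative is a contrapositive argument: an order-property instance for a bounded $\cL(P)$-formula in $(\cM,A)$ would, after extracting an $\cL$-indiscernible subsequence inside $\cM$ and unpacking the $Q_i y_i \in P$ prefix, produce an order-property instance for some $\emptyset$-definable formula of $A_\cM$, contradicting its stability. Either route requires careful bookkeeping of the interaction between the $\cL$-matrix $\psi$ and the ``$\in P$'' quantifiers.
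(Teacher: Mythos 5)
The paper does not actually prove this proposition; it is quoted verbatim as Proposition 3.1 of Casanovas--Ziegler, so there is no in-paper argument to compare against. Your forward direction is fine, and your reduction of an $\cL(P)$-type over $B$ to the family of traces $D_\psi(\abar,\bbar)=\psi(\abar,\cM^{|\ybar|},\bbar)\cap A^{|\ybar|}$ is the correct skeleton: boundedness does reduce everything to the relativized quantifier prefixes, whose truth values are determined by these traces.

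The gap is the one you flag yourself, and it is not bookkeeping but the mathematical heart of the proof. To establish stability of $\Th(\cM,A)$ you must count types over small sets $B$ inside a saturated $(\cN,P^{\cN})\succeq_{\cL(P)}(\cM,A)$, realized by tuples $\abar\in N$ lying outside $M$. For such $\abar$ the trace $\psi(\abar,\cN^{|\ybar|},\bbar)\cap (P^{\cN})^{|\ybar|}$ is an \emph{externally} definable subset of $(P^{\cN})^{|\ybar|}$: it is not of the form $R_X$ for an $\cM$-definable $X$, so your claim that it is $\emptyset$-definable in the induced structure is only literally true in the original model, where there is nothing to prove. Consequently the ``localization to $A_0$'' cannot work as stated: if a trace is not a definable set of the induced structure, its ``$A_{\cM}$-type'' carries no content, and restricting it to a small $A_0$ genuinely loses information (the witnesses to an $\exists \ybar\in P$ may all lie outside $A_0$). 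The missing ingredient is definability of types in the stable theory $T$: $\tp_{\cL}(\abar/P^{\cN}\cup B)$ does not fork over some $D\seq P^{\cN}\cup B$ with $|D|\leq |T|$ and is definable over $\acl^{\eq}(D)$, so each trace becomes an instance $R_Z(\ybar,\bar{e})$ of an induced relation with a \emph{finite} parameter tuple $\bar{e}$ from $P^{\cN}$, and the entire family of traces is controlled by a single defining schema over $D$. Only at that point does stability of $A_{\cM}$ enter, to bound the number of $\cL^{\cM}_S$-types of the tuples $\bar{e}$ and hence the number of possible truth values of the relativized sentences. Your remark about canonical parameters in $A_{\cM}^{\eq}$ points at the wrong structure --- the relevant canonical bases live in $\cN^{\eq}$ and must be pushed into $\acl^{\eq}(P^{\cN}\cup B)$ --- and your contrapositive route faces exactly the same obstacle (for NIP this is where Chernikov--Simon need honest definitions). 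So the proposal is a correct outline with the decisive step absent.
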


We will also use the following characterization of bounded sets in stable theories, which is part of Proposition 5.3 of \cite{CaZi}. 

\begin{proposition}[Casanovas \& Ziegler]\label{prop:5.3}
If $T$ is stable then, for any $\cM\models T$ and $A\seq M$, the following are equivalent.
\begin{enumerate}[$(i)$]
\item $A$ is bounded in $\cM$.
\item If $(\cN,B)\equiv_{\cL(P)}(\cM,A)$ is $|T|^+$-saturated, $f$ is an $\cL$-elementary map in $\cN$, which is a finite extension of a permutation of $B$, and $a\in N$, then there is $b\in N$ such that $f\cup\{(a,b)\}$ is $\cL$-elementary.
\end{enumerate}
\end{proposition}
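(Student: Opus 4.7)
The plan is to first upgrade $\cL$-elementarity of $f$ to $\cL(P)$-elementarity using boundedness. Any $\cL(P)$-formula $\phi(\ybar)$ is equivalent modulo $\Th(\cN,B)$ to a bounded formula $Q_1 z_1 \in P \cdots Q_m z_m \in P\,\psi(\ybar,\zbar)$ with $\psi\in\cL$. Evaluated at a tuple $\ubar$ from $\operatorname{dom}(f) = B \cup \cbar$, the $P$-quantifiers range over $B$; since $f$ permutes $B$ bijectively and is $\cL$-elementary on all of $B\cup\cbar$, a direct induction on the quantifier prefix (substituting $z_i \mapsto f(z_i)$ at each quantified stage) preserves the truth value, so $f$ is $\cL(P)$-elementary on its domain. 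Because $\cbar$ is finite, $\tp_{\cL(P)}(a/\cbar)$ has size at most $|T|$, so by $|T|^+$-saturation of $(\cN, B)$ one can choose $b \in N$ realizing its $f$-image, yielding $\cbar a \equiv_{\cL(P)} \dbar b$. To ensure that $f \cup \{(a,b)\}$ is $\cL$-elementary on the full domain $B \cup \cbar \cup \{a\}$, I would invoke stability: $\tp_{\cL}(a/B\cbar)$ does not fork over some finite $\cbar^* \seq B \cup \cbar$ (which we may take to contain $\cbar$), and the associated strong type is stationary. Replacing $\cbar$ by $\cbar^*$ at the outset and choosing $b$ to realize the corresponding strong type over $f(\cbar^*)$ then forces $\cbar^* \ubar a \equiv_{\cL} f(\cbar^*) f(\ubar) b$ for every $\ubar \in B$ by uniqueness of the non-forking extension.

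\textbf{(ii) $\Rightarrow$ (i).} Working in a $|T|^+$-saturated $(\cN, B) \equiv_{\cL(P)} (\cM,A)$, it suffices by a standard compactness reduction to show that any two tuples $\cbar, \dbar$ in $\cN$ with matching \emph{bounded} $\cL(P)$-types are $\cL(P)$-indistinguishable. Equality of bounded types immediately yields $\cbar \equiv_{\cL} \dbar$ and, for each $\cL$-formula $\psi(\ybar,\zbar)$, equal cardinalities for the $B$-sets $\{\ubar \in B^{|\zbar|} : \cN \models \psi(\cbar,\ubar)\}$ and $\{\ubar \in B^{|\zbar|} : \cN \models \psi(\dbar,\ubar)\}$. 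A stability-based pairing of these realizations (using that in a stable theory the $B$-populations are controlled by finitely many non-forking parameters) produces an $\cL$-elementary permutation of $B$ extending $\cbar \mapsto \dbar$. This is exactly the setup of (ii); iterating (ii) in a back-and-forth along an enumeration of $N$ then builds an $\cL(P)$-automorphism of $(\cN,B)$ sending $\cbar$ to $\dbar$, whence $\cbar \equiv_{\cL(P)} \dbar$.

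\textbf{Main obstacle.} The technical heart lies in (i) $\Rightarrow$ (ii): the bounded-unfolding step delivers $\cL(P)$-elementarity only over the finitely many new parameters $\cbar$, whereas what (ii) demands is $\cL$-elementarity after adjoining \emph{arbitrarily many} $B$-parameters. Bridging this gap cleanly requires stability via stationarity of strong types and uniqueness of non-forking extensions over a canonical base, together with an astute choice of $b$; it is here that the proof genuinely exploits stability of $T$ rather than mere boundedness of $A$. The symmetric delicate point in (ii) $\Rightarrow$ (i)---constructing the initial $\cL$-elementary permutation of $B$ matching $\cbar$ to $\dbar$ from equality of bounded types---is likewise handled by stability-theoretic counting.
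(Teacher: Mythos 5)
First, a caveat: the paper does not prove this proposition---it is quoted directly from \cite{CaZi}---so I am measuring your argument against the standard proof there. Your treatment of $(ii)\Rightarrow(i)$ is the right back-and-forth-plus-compactness argument in outline (although the initial $\cL$-elementary permutation of $B$ matching $\cbar$ to $\dbar$ comes from a back-and-forth \emph{inside} $B$, using $|T|^+$-saturation of $(\cN,B)$ and the fact that equality of bounded types is an extendible condition; no ``stability-based counting'' is needed or really available there). Likewise, the first step of your $(i)\Rightarrow(ii)$---that boundedness forces $f$ to preserve all $\cL(P)$-formulas over its domain---is correct and is indeed how the proof begins.

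The genuine gap sits exactly where you place ``the technical heart.'' Choosing $b$ to realize the $f$-image of $\text{stp}(a/\cbar^*)$ does not license an appeal to uniqueness of the non-forking extension: for that you would also need $b\ind_{f(\cbar^*)}B\dbar$, and nothing in your construction produces such a $b$, nor need one exist in $\cN$ at all. If it always did, condition $(ii)$ would hold for every $A$ in every stable $T$, contradicting Poizat's unstable pair $(\cM,N)$ cited in the paper, where strong types over small subsets are realized in abundance but the extension property fails. So boundedness must be invoked a second time at precisely this step. The repair, which is how \cite{CaZi} proceed, goes through definability of types rather than stationarity: $\tp_{\cL}(a/B\cbar)$ is definable over $\cbar\bbar_0$ for some $\bbar_0\seq B$ with $|\bbar_0|\leq|T|$, and the scheme asserting that $\tp_{\cL}(x/B\dbar)$ is defined by the $f$-images of these definitions, namely the formulas $\forall\zbar\in P\,\bigl(\psi(x,\dbar,\zbar)\leftrightarrow d_\psi(\zbar,\dbar,f(\bbar_0))\bigr)$, is a set of at most $|T|$ \emph{bounded} $\cL(P)$-formulas over the small parameter set $\dbar f(\bbar_0)$. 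Its finite satisfiability follows from your (correct) first step, $|T|^+$-saturation realizes it by some $b\in N$, and any such $b$ automatically has the required $\cL$-type over \emph{all} of $B\dbar$, with no independence assertion needed. (A minor further point: in a merely stable theory the base $\cbar^*$ has size $\leq|T|$ rather than being finite; this is harmless since $\cbar^*\cap B$ already lies in the domain of $f$, but ``finite'' as written is only correct for superstable $T$.)
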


For the rest of the paper, we will focus on expansions of weakly minimal theories.

\begin{definition}
A complete theory $T$ with infinite models is \textbf{weakly minimal} if it is stable and any forking extension of a $1$-type is algebraic.
\end{definition}

We will also call models of $T$ \textbf{weakly minimal} when $T$ is weakly minimal.   Recall that any stable theory has a $U$-rank in $\text{Ord}\cup\{\infty\}$, which is ordinal-valued if and only if the theory is superstable. Recall also that $T$ is weakly minimal if and only if it is stable of $U$-rank $1$. The following result  is  \cite[Theorem 2.11]{CoSS}, and is proved using Proposition \ref{prop:CZ-bounded} and techniques similar to the work of Palac\'{i}n and Sklinos \cite{PaSk} on the expansion of $(\Z,+)$ by $\{2^n:n\in\N\}$.  (We use $\cdot$ for multiplication of ordinals, which extends to $\text{Ord}\cup\{\infty\}$ in the obvious way.)

\begin{theorem}[Conant]\label{thm:CoSS}
Assume $T$ is weakly minimal and fix $\cM\models T$. Suppose $A\seq M$ is bounded in $\cM$ and is such that $A_{\cM}$ is stable of $U$-rank $\alpha$. Then $(\cM,A)$ is stable of $U$-rank at most $\alpha\cdot\omega$.
\end{theorem}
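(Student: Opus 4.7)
Stability of $(\cM,A)$ is immediate from Proposition \ref{prop:CZ-bounded}, since $T$ is stable (being weakly minimal), $A_{\cM}$ is stable by hypothesis, and $A$ is bounded in $\cM$.  For the $U$-rank bound, the plan is to fix a sufficiently saturated $(\cN,B)\models\Th(\cM,A)$ and to show that for any $C\seq N$ and $a\in N$, $U^{(\cM,A)}(a/C)\leq\alpha\cdot\omega$.  Given such a pair, if $a\in\acl_{\cL}(C\cup B)$ let $\bbar$ be a tuple from $B$ of minimal length $n=n(a,C)$ with $a\in\acl_{\cL}(C\cup\bbar)$; otherwise call $a$ generic over $C$.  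Since $\alpha\cdot\omega$ is a limit ordinal, it suffices to produce, for each non-generic $(a,C)$, an ordinal $\beta(n)<\alpha\cdot\omega$ with $U^{(\cM,A)}(a/C)\leq\beta(n)$, and then reduce the generic case to the non-generic one.

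The heart of the proof is the inequality
\[
U^{(\cM,A)}(a/C)\leq U^{A_{\cM}}\bigl(\bbar \mid B\cap\acl_{\cL}(C)\bigr)
\]
in the non-generic case.  Granting this, iterated Lascar inequality bounds the right-hand side by the $n$-fold Hessenberg sum $\alpha\oplus\cdots\oplus\alpha$, which sits strictly below $\alpha\cdot\omega$ by Cantor normal form arithmetic.  The inequality itself is established by exhibiting $\tp^{\cL(P)}(a/C)$ as controlled by $\tp^{A_{\cM}}(\bbar\mid B\cap\acl_{\cL}(C))$: any $\cL$-elementary permutation of $B$ that fixes $B\cap\acl_{\cL}(C)$ pointwise and preserves the $A_{\cM}$-type of $\bbar$ lifts, via iterated application of Proposition \ref{prop:5.3}, to an $\cL(P)$-automorphism of $\cN$ fixing $C$ pointwise and mapping $a$ to another realization of $\tp^{\cL(P)}(a/C)$.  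Here weak minimality of $T$ is used to ensure that once $\bbar$ is pinned down there are only finitely many $\cL$-algebraic options for $a$ over $C\bbar$, which do not contribute to the $U$-rank.  Consequently, forking $\cL(P)$-extensions of $\tp^{\cL(P)}(a/C)$ induce forking $A_{\cM}$-extensions of $\tp^{A_{\cM}}(\bbar\mid B\cap\acl_{\cL}(C))$, yielding the displayed bound.  In the generic case, a parallel application of Proposition \ref{prop:5.3} (now with the identity permutation of $B$) shows that $\tp^{\cL(P)}(a/C)$ is unique among $\cL$-generics, and every forking $\cL(P)$-extension brings $a$ into $\acl_{\cL}(C'\cup B)$ for some $C'\supseteq C$, dropping into the non-generic analysis at some finite level $n$; taking a supremum gives $U^{(\cM,A)}(a/C)\leq\alpha\cdot\omega$.

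The main obstacle is the transfer-of-forking statement that underlies the displayed inequality.  A priori, an $\cL(P)$-extension of $\tp^{\cL(P)}(a/C)$ may fork without visibly affecting the $A_{\cM}$-type of $\bbar$, so one must carefully leverage boundedness to reduce every $\cL(P)$-formula satisfied by $a$ to one whose truth depends only on bounded $P$-quantification plus the $\cL$-algebraic recipe linking $a$ to $C\bbar$, and then invoke Proposition \ref{prop:5.3} to supply enough $\cL(P)$-automorphisms to realize that reduction.  Once the $\cL(P)$-type of $a$ over $C$ is seen to be controlled by the $A_{\cM}$-type of $\bbar$ in this way, the $U$-rank bound closes via Lascar's inequality and the ordinal arithmetic $\alpha\oplus\cdots\oplus\alpha<\alpha\cdot\omega$.
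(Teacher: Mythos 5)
First, a point of comparison: the paper does not prove this theorem at all --- it is imported verbatim from \cite[Theorem 2.11]{CoSS}, with only the remark that the proof uses Proposition \ref{prop:CZ-bounded} and techniques in the style of Palac\'{i}n--Sklinos. Your architecture (stability from Proposition \ref{prop:CZ-bounded}; for the rank bound, split according to whether $a\in\acl_{\cL}(C\cup B)$, witness the algebraic case by a minimal tuple $\bbar\in B^n$, bound that case by the rank of an $n$-tuple in the induced structure via Lascar and $\alpha\oplus\cdots\oplus\alpha<\alpha\cdot\omega$, and recover the generic case as a supremum) is consistent with that description and is the right skeleton. The problem is that the load-bearing step --- the inequality $U^{(\cM,A)}(a/C)\leq U^{A_{\cM}}\bigl(\bbar\mid B\cap\acl_{\cL}(C)\bigr)$ --- is introduced with ``granting this,'' and the justification you then offer does not actually prove it. The automorphism argument (lifting $\cL$-elementary permutations of $B$ fixing $B\cap\acl_{\cL}(C)$ to $\cL(P)$-automorphisms over $C$ via Proposition \ref{prop:5.3}) at best shows that $\tp^{\cL(P)}(a/C)$ is \emph{determined} by the induced type of $\bbar$ together with the algebraic recipe over $C\bbar$; that bounds the number of types and yields stability, but it does not transfer \emph{forking}. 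To get the rank inequality one must show that every $\cL(P)$-forking extension of $\tp^{\cL(P)}(a/C)$ either makes $a$ algebraic over the new base or forces the induced type of a suitable witnessing tuple to fork in $B_{\cN}$ over $B\cap\acl_{\cL}(C)$ --- and this needs a genuine forking-calculus argument, with care taken over the fact that the witness $\bbar$ is not canonical and may change (or shorten) as the base grows. You name this as ``the main obstacle'' and then leave it unresolved.

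Two further concrete gaps. Proposition \ref{prop:5.3} only extends \emph{finite} extensions of permutations of $B$ one element at a time, so it does not directly supply an $\cL(P)$-automorphism of $\cN$ fixing an arbitrary, possibly infinite, base $C$ pointwise; you either need to reduce the $U$-rank computation to small bases first and say so, or run a back-and-forth whose restriction to $B$ remains a global permutation at every stage. And in the generic case, the claim that every forking $\cL(P)$-extension of the type of $a\notin\acl_{\cL}(C\cup B)$ drags $a$ into $\acl_{\cL}(C'\cup B)$ is exactly where weak minimality must be invoked (over an $\cL$-algebraically closed set a nonalgebraic $1$-type has a unique nonalgebraic, hence nonforking, extension), and it is asserted rather than argued; without it the supremum computation giving $\alpha\cdot\omega$ does not close. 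In short: the decomposition and the ordinal arithmetic are fine, but the two forking-transfer claims on which the whole bound rests are missing their proofs.
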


\begin{definition}$~$
\begin{enumerate}
\item Given $\cM\models T$, let $\cL_{M}$ be the expansion of $\cL$ by adding a constant symbol for each element of $M$, and let $T_{\cM}$ be the elementary diagram of $\cM$ in the expanded language $\cL_{M}$.
\item Fix $\cM\models T$ and $\cM_0\preceq\cM$. A set $A\seq M$ is \textbf{bounded in $\cM$ with respect to $\cL_{M_0}$} if it is bounded in the canonical $\cL_{M_0}$-expansion of $\cM$, i.e., for all $\cL(P)$-formulas $\phi(\xbar;\ybar)$ and all $\bbar\in M_0^{\ybar}$, there is a bounded $\cL(P)$-formula $\psi(\xbar;\zbar)$ and $\cbar\in M_0^{\zbar}$ such that $(\cM,A)\models\forall \xbar(\phi(\xbar;\bbar)\leftrightarrow\psi(\xbar;\cbar))$. 
\end{enumerate}
\end{definition}

We now state our first main result.

\begin{theorem}\label{thm:wmt-bounded}
If $T$ is weakly minimal, $\cM\models T$, and $\cM_0\preceq\cM$, then every subset of $M$ is bounded in $\cM$ with respect to $\cL_{M_0}$.
\end{theorem}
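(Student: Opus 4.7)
The plan is to apply Proposition~\ref{prop:5.3} to the theory $T_{\cM_0}$, which remains weakly minimal after naming parameters from $\cM_0$. Fix a $|T_{\cM_0}|^+$-saturated $(\cN^*, B) \equiv_{\cL_{M_0}(P)} (\cM^*, A)$, together with an $\cL_{M_0}$-elementary partial map $f$ on $\cN^*$ whose domain is $B \cup F$ for some finite $F$ and such that $f(B) = B$. Given $a \in N$, the goal is to produce $b \in N$ with $f \cup \{(a,b)\}$ $\cL_{M_0}$-elementary, and I split according to whether $a \in \acl_{\cL_{M_0}}(B \cup F)$.

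In the algebraic case, the complete type $\tp_{\cL_{M_0}}(a/B\cup F)$ is isolated by some $\cL_{M_0}$-formula $\phi(x,\cbar)$ with $\cbar$ a finite tuple from $B \cup F$. Then $\phi(x,f(\cbar))$ isolates a unique $b \in N$ by elementarity of $f$ on $\cbar$ and $|T_{\cM_0}|^+$-saturation. Since $a$ and $b$ are the unique realizations of corresponding isolated formulas over elementary-related tuples, $f \cup \{(a,b)\}$ is $\cL_{M_0}$-elementary on all of $B \cup F \cup \{a\}$.

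In the non-algebraic case, set $p_0 := \tp_{\cL_{M_0}}(a/\emptyset)$. Stationarity of $p_0$ over $\emptyset$ (weak minimality of $T_{\cM_0}$) identifies $\tp_{\cL_{M_0}}(a/B\cup F)$ with the unique non-forking extension of $p_0$, and its $f$-image is the unique non-forking extension of $p_0$ to $B \cup f(F)$. Thus it suffices to find $b \in N$ with $\tp_{\cL_{M_0}}(b/\emptyset) = p_0$ and $b \notin \acl_{\cL_{M_0}}(B \cup f(F))$; stationarity then forces $\tp_{\cL_{M_0}}(b/B \cup f(F)) = f_{*}\tp_{\cL_{M_0}}(a/B\cup F)$, yielding the desired extension of $f$. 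To produce $b$, I consider the $\cL_{M_0}(P)$-type $r(x)$ over the finite parameter tuple $f(F)$ consisting of the $\cL_{M_0}$-type $f_{*}\tp_{\cL_{M_0}}(a/F)$ together with the universal $\cL_{M_0}(P)$-formulas
\[ \forall \ybar \bigl(P(\ybar) \wedge \exists^{\leq n} x'\,\psi(x',\ybar,f(F)) \to \neg\psi(x,\ybar,f(F))\bigr) \]
(for $\psi \in \cL_{M_0}$ and $n \in \N$), which together assert that $x \notin \acl_{\cL_{M_0}}(P \cup f(F))$. Since $|r| \le |T_{\cM_0}|$ and the parameters are finite, once $r$ is shown consistent with $\Th_{\cL_{M_0}(P)}(\cN^*,B)$, the $|T_{\cM_0}|^+$-saturation of $(\cN^*,B)$ produces a realization $b \in N$.

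The main obstacle is the consistency of $r$: if $|B|$ exceeds $|T_{\cM_0}|$, saturation of $(\cN^*,B)$ alone does not obviously supply a realization of $p_0$ outside $\acl_{\cL_{M_0}}(B \cup f(F))$. My approach is to establish consistency by passing to a sufficiently rich $\cL_{M_0}(P)$-elementary extension $(\cN'^*,B')$ of $(\cN^*,B)$ in which the (unique) non-forking extension of $p_0$ to $B' \cup f(F)$ is realized; any such realization $b_0$ witnesses $r(x)$ in $(\cN'^*, B')$ because $b_0 \notin \acl_{\cL_{M_0}}(B' \cup f(F))$ is exactly the acl-avoidance asserted by $r$ when $P$ is interpreted as $B'$. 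Carefully choosing this extension so that its $\cL_{M_0}$-saturation exceeds $|B'|$ (ensuring realization of the non-forking extension over $B' \cup f(F)$) is the delicate part of the argument, and is where weak minimality of $T_{\cM_0}$ plays the decisive role by making the whole type $q$ of $b$ definable over $\emptyset$ and hence essentially determined by $p_0$.
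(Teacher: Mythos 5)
Your reduction in the non-algebraic case is correct and matches the shape of the paper's argument: since $p_0=\tp_{\cL_{M_0}}(a/\emptyset)$ is a type over the named model $M_0$ and hence stationary, and since by weak minimality every non-algebraic extension is non-forking, everything comes down to producing \emph{some} $b\models p_0$ with $b\notin\acl_{\cL_{M_0}}(B\cup f(F))$. (The algebraic case is also fine, modulo the word ``unique'': $\phi(x,f(\cbar))$ isolates an algebraic type and any of its finitely many realizations works.) The gap is in how you propose to produce $b$. You want to show your type $r(x)$ is consistent by passing to an $\cL_{M_0}(P)$-elementary extension $(\cN'^*,B')$ whose $\cL_{M_0}$-reduct realizes all $1$-types over $B'$ plus a finite tuple. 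But the existence of such a pair is precisely the assertion that $B$ is \emph{small} in the sense of Definition \ref{def:small}, and this fails in general: Section \ref{sec:small} is devoted to characterizing when it holds, and by Corollary \ref{cor:wmg-small} a non-small set in a unidimensional weakly minimal theory satisfies $N=\acl_{\cL_M}(B)$ in \emph{every} elementarily equivalent pair, so no amount of passing to elementary extensions of the pair produces the saturation you need. Since the whole point of Theorem \ref{thm:wmt-bounded} is that boundedness holds even for non-small $A$, any route through such a saturated pair is circular. Definability of the non-forking extension over $\emptyset$, which you invoke at the end, does not help: definability of a type never yields its realization in a given model.

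What is actually needed, and what the paper supplies, is an argument carried out inside the fixed structure $(\cN^*,B)$ comparing the two algebraic closures $M_1:=\acl_{\cL_{M_0}}(B\cup F)$ and $M_2:=\acl_{\cL_{M_0}}(B\cup f(F))$. After first saturating $\cM_0$ (this is why Lemma \ref{lem:wmfacts} and the three-structure setup $(\cM,A,M_0)$ appear: one needs $M_1$, $M_2$, and $\acl_{\cL_{M_0}}(B)$ to be $|T|^+$-saturated models so that the dimension theory of regular types applies), one extends $f$ to an elementary bijection $f^*\colon M_1\to M_2$ and shows, for the regular type $p=\tp(a/M_1)$, that $\dim_p(N^*/M_1)=\dim_p(N^*/M_2)$. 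This uses additivity of $\dim_p$ over $\acl_{\cL_{M_0}}(B)$ together with the observation that $f^*$ carries a maximal independent set witnessing $\dim_p(M_1/\acl(B))$ to one witnessing $\dim_p(M_2/\acl(B))$, with a separate argument when $\dim_p(N^*/\acl(B))$ is infinite. The element $a$ witnesses that the first dimension is positive, hence so is the second, which is exactly the existence of your $b$. Nothing in your proposal performs this comparison, so the non-algebraic case is not established; you have correctly located the difficulty but not resolved it.
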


Before continuing to the proof, we use Theorem \ref{thm:wmt-bounded} to establish the second main result of this section.

\begin{theorem}\label{thm:wmt-main}
Suppose $T$ is weakly minimal, $\cM\models T$, and $A\seq M$.
\begin{enumerate}[$(a)$]
\item If $A_{\cM}$ is stable of $U$-rank $\alpha$, then $(\cM,A)$ is stable of $U$-rank at most $\alpha\cdot\omega$.
\item If $A_{\cM}$ is NIP then $(\cM,A)$ is NIP.
\end{enumerate}
\end{theorem}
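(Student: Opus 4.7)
The plan is to derive this theorem as an almost immediate corollary of Theorem \ref{thm:wmt-bounded}, combined with Theorem \ref{thm:CoSS} for part $(a)$ and the NIP analogue of Proposition \ref{prop:CZ-bounded} due to Chernikov and Simon \cite{ChSiDP1} for part $(b)$. The only wrinkle is that Theorem \ref{thm:wmt-bounded} gives boundedness with respect to $\cL_{M_0}$ rather than in the original language $\cL$, so I need to pass through an expansion by constants and check that this does not affect the relevant invariants.

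First I would fix some $\cM_0 \preceq \cM$ (any will do) and form $\cM^* := (\cM, c_a)_{a \in M_0}$, the canonical $\cL_{M_0}$-expansion. Since expansion by constants preserves stability and $U$-rank, $\Th(\cM^*)$ is again weakly minimal. Moreover, by Definition \ref{def:induced} the induced structures $A_{\cM}$ and $A_{\cM^*}$ coincide, because $\cM$-definable sets are already allowed to use arbitrary parameters from $M$. In particular, the hypotheses on $A_{\cM}$ in $(a)$ and $(b)$ transfer verbatim to $A_{\cM^*}$.

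Next I would invoke Theorem \ref{thm:wmt-bounded} to conclude that $A$ is bounded in $\cM^*$, viewed as a structure in the language $\cL_{M_0}(P)$. For $(a)$, Theorem \ref{thm:CoSS} applied to the weakly minimal theory $\Th(\cM^*)$ then yields that $(\cM^*, A)$ is stable of $U$-rank at most $\alpha\cdot \omega$. For $(b)$, the Chernikov--Simon analogue of Proposition \ref{prop:CZ-bounded} gives that $(\cM^*, A)$ is NIP (using that $\cM^*$ is stable, hence NIP, and that $A_{\cM^*} = A_{\cM}$ is NIP by assumption).

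Finally I would pass back from $(\cM^*, A)$ to $(\cM, A)$. Since the two structures differ only by constant symbols naming elements of $M_0$, stability, NIP, and the $U$-rank of the resulting theory are all preserved. This yields the desired bounds on $(\cM, A)$ in both parts. I do not foresee any serious obstacle here: all the heavy lifting sits inside Theorem \ref{thm:wmt-bounded}, and the remaining argument is a routine reduction through an expansion by parameters.
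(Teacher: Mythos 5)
Your proposal is correct and follows essentially the same route as the paper: the paper's proof takes $\cM_0=\cM$ and compresses the bookkeeping into a single ``WLOG assume $\cL=\cL_M$,'' which works precisely because $A_{\cM}=A_{\cM^*}$ (as you note) and $(\cM,A)$ is a reduct of $(\cM^*,A)$, so stability, NIP, and $U$-rank bounds pass down.
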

\begin{proof}
By definition of $A_{\cM}$, we may assume without loss of generality that $\cL=\cL_{M}$ and $T=T_{\cM}$. By Theorem \ref{thm:wmt-bounded}, $A$ is bounded in $\cM$. So part $(a)$ follows from Theorem \ref{thm:CoSS}, and part $(b)$ follows from \cite[Corollary 2.5]{ChSiDP1}.
\end{proof}

The proof of Theorem \ref{thm:wmt-bounded} breaks into several pieces. We first note various facts about weakly minimal theories. First, note that if $T$ is weakly minimal and $\cM\models T$, then $T_{\cM}$ is weakly minimal. 

\begin{lemma}\label{lem:wmfacts}
Suppose $T$ is weakly minimal, $\cM_0\models T$, $\cM_0\preceq\cM$, and $M_0\seq A\seq M$. Then $\acl(A)\models T$ and $\cM_0\preceq\acl(A)\preceq\cM$. Moreover, if $\cM_0$ is $|T|^+$-saturated, then  $\acl(A)$ is $|T|^+$-saturated as well.    
\end{lemma}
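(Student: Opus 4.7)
The plan is to verify everything via the Tarski--Vaught criterion together with the following key ``weakly minimal coheir'' observation: whenever $B\supseteq M_0$ and $a\in M$ with $a\notin\acl(B)$, the type $\tp(a/B)$ has $U$-rank $1$, agreeing with the $U$-rank of $\tp(a/M_0)$ (since $a\notin\acl(M_0)=M_0$), so it does not fork over $M_0$; because $\cM_0$ is a model of a stable theory, $\tp(a/B)$ is then a coheir of $\tp(a/M_0)$, i.e.\ each of its formulas is realized in $M_0$.

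For $\acl(A)\preceq\cM$ I would verify Tarski--Vaught directly. Given an $\cL$-formula $\phi(x,\bar c)$ with $\bar c\in\acl(A)$ and a witness $a\in\cM$, the algebraic case is immediate since then $a\in\acl(\bar c)\seq\acl(A)$. For the non-algebraic case, I would form $A^+=A\cup\bar c$, so that $\acl(A^+)=\acl(A)$ but now $\bar c\in A^+$. If $a\in\acl(A)$ we are done; otherwise $a\notin\acl(A^+)$, and the coheir observation applied to $\tp(a/A^+)$ shows that $\phi(x,\bar c)$, which lies in $\tp(a/A^+)$, is realized by some element of $M_0\seq\acl(A)$. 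Once $\acl(A)\preceq\cM$ is established, $\acl(A)\models T$ is automatic, and $\cM_0\preceq\acl(A)$ follows from the standard fact that $\cM_0\preceq\cM$ combined with $M_0\seq\acl(A)\preceq\cM$ yields $\cM_0\preceq\acl(A)$.

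For the saturation clause, fix $B\seq\acl(A)$ with $|B|\leq|T|$ and $p\in S(B)$, and aim to realize $p$ in $\acl(A)$. The algebraic case is trivial; otherwise I would pass to $T^{\eq}$ and set $\bar B=\acl^{\eq}(B)$ and $C_0=\bar B\cap M_0^{\eq}$, both algebraically closed in $T^{\eq}$ and of size at most $|T|$, with $C_0\seq M_0^{\eq}$. Let $\bar p$ be a non-forking extension of $p$ to $\bar B$; by weak minimality it is non-algebraic of $U$-rank $1$, and $\bar p|_{C_0}$ is a stationary non-algebraic type of $U$-rank $1$. The partial type $\bar p|_{C_0}(x)\cup\{x\neq a:a\in\acl(\bar B)\cap M_0\}$ has parameters in a subset of $M_0^{\eq}$ of size at most $|T|$ and is consistent because $\bar p|_{C_0}$ is non-algebraic, so by $|T|^+$-saturation of $\cM_0$ (extended to $T^{\eq}$) it is realized by some $b_0\in M_0$. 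Then $b_0\notin\acl(\bar B)$, so $\tp(b_0/\bar B)$ is a non-algebraic extension of $\bar p|_{C_0}$, hence non-forking, hence equal to $\bar p$ by stationarity of $\bar p|_{C_0}$; restricting to $B$ gives $\tp(b_0/B)=p$, and $b_0\in M_0\seq\acl(A)$ is the required realization.

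The main obstacle will be the saturation clause: one has to work in $T^{\eq}$ to access stationarity of the restricted type (the key uniqueness device), and one has to execute the cardinality count showing $\acl(\bar B)\cap M_0$ is a ``small'' forbidden set so that a realization $b_0$ can be chosen outside it. The elementarity part is comparatively light once the parameter $\bar c$ is absorbed into an enlarged base.
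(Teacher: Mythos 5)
Your argument for $\acl(A)\preceq\cM$ (and hence $\acl(A)\models T$ and $\cM_0\preceq\acl(A)$) is essentially the paper's: a Tarski--Vaught check in which a non-algebraic witness is forced, by weak minimality and the fact that nonforking over a model $\cM_0$ is finite satisfiability in $M_0$, to have a realization already inside $M_0$. This part matches the paper up to cosmetic bookkeeping ($A^+=A\cup\bar c$ versus the paper's ``WLOG $A=\acl(A)$'').

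The saturation clause, however, is handled by a genuinely different route, and your version is correct. The paper proves the stronger statement that \emph{any} $\cN\succeq\cM_0$ is $|T|^+$-saturated: it takes the $|T|^+$-prime model $\cN^*$ over $N$, picks $b\in N^*\setminus N$, notes $b\ind_{M_0}N$ by weak minimality, and then invokes the domination of $N^*$ by $N$ over $M_0$ to contradict $b\in N^*$. Your proof instead realizes a given non-algebraic $p\in S(B)$ (with $B\seq\acl(A)$, $|B|\le|T|$) \emph{inside $M_0$ itself}: pass to $T^{\eq}$, take $\bar B=\acl^{\eq}(B)$ and $C_0=\bar B\cap M_0^{\eq}$ (algebraically closed, hence a stationary base), use $|T|^+$-saturation of $\cM_0^{\eq}$ to realize $\bar p|_{C_0}$ while avoiding the $\le|T|$-sized set $\acl(\bar B)\cap M_0$, and then appeal to weak minimality (non-algebraic $\Rightarrow$ $U$-rank $1$ $\Rightarrow$ nonforking over $C_0$) together with stationarity to upgrade this to $\tp(b_0/\bar B)=\bar p$ and thus $\tp(b_0/B)=p$. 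The cardinality checks ($|\acl^{\eq}(B)|\le|T|$, consistency of the difference type) go through. The trade-off is that the paper's argument is shorter and more structural but leans on domination theory and prime models, while yours is more elementary and self-contained but requires care with $T^{\eq}$, the intersection base $C_0$, and the cardinal bookkeeping. Both are valid.
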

\begin{proof}
Without loss of generality, assume $A=\acl(A)$. To show $A\preceq \cM$, we choose an $\cL$-formula $\phi(x;\abar)$, with $\abar\subset A$, that has a solution $b\in M$, and we show that $\phi(x;\abar)$ has a solution in $A$. If $b\in A$ we are done, so assume $b\not\in A$. Then $b\ind_{M_0}\abar$ and so, by finitely satisfiability, there is $b^*\in M_0$ such that $\cM\models\varphi(b^*;\abar)$, as desired. Next, since $\cM_0\preceq \cM$ and $M_0\seq A\seq M$, it follows that $\cM_0\preceq A$.

Now assume $\cM_0$ is $|T|^+$-saturated. We argue that any model $\cN\succeq\cM_0$ must also be $|T|^+$-saturated, which suffices. The proof is essentially the same as \cite[Proposition 3.5]{GooLask} (in fact, the following argument can be adapted to any non-multidimensional theory by replacing the use of weak minimality with an appropriate version of the ``three-model lemma"). Let $\cN^*$ be the $|T|^+$-prime model over $N$. If $\cN=\cN^*$ we finish, so assume otherwise. Choose $b\in N^*\backslash N$. Then $\tp(b/N)$ is a non-algebraic extension of $\tp(b/M_0)$ and so $b\ind_{M_0} N$ by weak minimality. Since $N^*$ is dominated by $N$ over $M_0$ (adapt \cite[Lemma 1.4.3.4$(iii)$]{Pibook} to the category of $|T|^+$-saturated models), we have $b\ind_{M_0}N^*$, which is a contradiction. 
\end{proof}

Suppose now that $T$ is weakly minimal. Then a $1$-type over a model of $T$ is regular if and only if it is non-algebraic. Suppose $\cM\preceq\cN$ are $|T|^+$-saturated models of $T$. By weak minimality and exchange for algebraic independence, we have that for any regular  $p,q\in S_1(M)$, if $p$ and $q$ are non-orthogonal, and $I\seq p(N)$ and $J\seq q(N)$ are maximal $M$-independent sets, then $|I|=|J|$ (note that by $|T|^+$ saturation, orthogonality and weak orthogonality coincide for regular types over $M$; see \cite[Lemma 1.4.3.1]{Pibook}). So, for any regular type $p$ over some other model of $T$, we have a well-defined dimension $\dim_p(N/M)$, namely, the cardinality of a maximal $M$-independent set of realizations in $N$ of any regular $q\in S_1(M)$ non-orthogonal to $p$.  The following property of $\dim_p$ is a standard exercise (see \cite{Pibook}, \cite{Shbook}).

\begin{fact}\label{fact:weight}
Assume $T$ is weakly minimal and $\cM\preceq\cN\models T$ are $|T|^+$-saturated. Suppose $p$ is a regular type over some model of $T$, and $\cN^*\succeq\cN$. Then $\dim_p(N^*/M)$ is finite if and only if $\dim_p(N^*/N)$ and $\dim_p(N/M)$ are finite, and in this case $\dim_p(N^*/M)=\dim_p(N^*/N)+\dim_p(N/M)$.
\end{fact}

We now prove a proposition that carries additional hypotheses, which we subsequently remove in the proof of Theorem \ref{thm:wmt-bounded}. 

\begin{proposition}\label{prop:sat-version}
Suppose $T$ is weakly minimal, $\cM_0\models T$ is $|T|^+$-saturated, $\cM_0\preceq\cM$, and $A\seq M$ is $\cL_{M_0}$-algebraically closed (so, in particular, $M_0\seq A\seq M$). Then $A$ is bounded in $\cM$ with respect to $\cL_{M_0}$.
\end{proposition}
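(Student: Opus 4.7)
The plan is to verify condition $(ii)$ of Proposition \ref{prop:5.3} applied to the weakly minimal theory $T_{\cM_0}$ in place of $T$. Accordingly, fix a $|T_{\cM_0}|^+$-saturated $(\cN,B)\equiv_{\cL_{M_0}(P)}(\cM,A)$, an $\cL_{M_0}$-elementary map $f$ in $\cN$ which is a finite extension of some permutation $\sigma$ of $B$ by pairs $(a_i,b_i)_{i\leq n}$, and $a\in N$; the goal is to produce $b\in N$ such that $f\cup\{(a,b)\}$ is $\cL_{M_0}$-elementary. The trivial case $A=M$ is immediate (every $\cL(P)$-formula is already $\cL$, hence bounded), so assume $A\neq M$ and therefore $B\neq N$.

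As preparation, note that the statements ``$M_0\seq A$'' and ``$A=\acl_{\cL_{M_0}}(A)$'' are expressible as schemes in $\cL_{M_0}(P)$, so they transfer to $(\cN,B)$. Thus $M_0\seq B=\acl_{\cL_{M_0}}(B)$, and Lemma \ref{lem:wmfacts} gives that $B\preceq\cN$ in the language $\cL_{M_0}$; the same lemma, applied to $\acl_{\cL_{M_0}}(\text{im}(f))\supseteq M_0$, shows that $\acl_{\cL_{M_0}}(\text{im}(f))$ is also a $|T_{\cM_0}|^+$-saturated $\cL_{M_0}$-elementary substructure of $\cN$. In particular $\sigma=f|_B$ is an $\cL_{M_0}$-automorphism of $B$.

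If $a\in \acl_{\cL_{M_0}}(\text{dom}(f))$, I pick an $\cL_{M_0}$-formula $\varphi(x;\bar{a},\bar{c})$ with $\bar{c}\in B$ that is algebraic in $x$ and isolates $a$ among its solutions; applying $\cL_{M_0}$-elementarity of $f$, the formula $\varphi(x;\bar{b},\sigma(\bar{c}))$ has a unique corresponding solution $b\in N$, and this $b$ works. The main case is $a\notin\acl_{\cL_{M_0}}(\text{dom}(f))$. Here I aim to find some $b\in N\setminus\acl_{\cL_{M_0}}(\text{im}(f))$. Given such a $b$, a standard consequence of weak minimality is that over any set containing a model there is a unique non-algebraic $1$-type; applied to both $\text{dom}(f)$ and $\text{im}(f)$ (each of which contains the model $B$), this forces $\tp_{\cL_{M_0}}(b/\text{im}(f))=f(\tp_{\cL_{M_0}}(a/\text{dom}(f)))$, which is the required elementarity.

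The main obstacle is producing this $b$, which I handle with the dimension calculus of Fact \ref{fact:weight}. Let $p$ be the generic $1$-type over $B$, let $k=\dim_p(\acl_{\cL_{M_0}}(\text{dom}(f))/B)\leq n$, and note that by $\cL_{M_0}$-elementarity of $f$ (which restricts to an automorphism of $B$) we have $\dim_p(\acl_{\cL_{M_0}}(\text{im}(f))/B)=k$ as well. Since $a\in N\setminus\acl_{\cL_{M_0}}(\text{dom}(f))$, extending a maximal $B$-independent set in $\acl_{\cL_{M_0}}(\text{dom}(f))$ by $a$ gives $\dim_p(N/B)\geq k+1$. Applying Fact \ref{fact:weight}$(c)$ with $\cM=B$, $\cN=\acl_{\cL_{M_0}}(\text{im}(f))$, and $\cN^*=\cN$, if $\dim_p(N/\acl_{\cL_{M_0}}(\text{im}(f)))$ were $0$ then $\dim_p(N/B)$ would be exactly $k$, a contradiction. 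Hence this dimension is at least $1$, yielding the desired $b$, and condition $(ii)$ of Proposition \ref{prop:5.3} is verified.
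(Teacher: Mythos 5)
Your overall strategy matches the paper's: verify condition $(ii)$ of Proposition \ref{prop:5.3} for $T_{\cM_0}$, use Lemma \ref{lem:wmfacts} to see that $B$, $\acl_{\cL_{M_0}}(\operatorname{dom}(f))$ and $\acl_{\cL_{M_0}}(\operatorname{im}(f))$ are $|T|^+$-saturated models, and run a dimension count via Fact \ref{fact:weight}. But the pivotal step rests on a false claim. It is not a consequence of weak minimality that ``over any set containing a model there is a unique non-algebraic $1$-type'': this fails already for $\Th(\Z,+)$ (non-algebraic $1$-types over a model are distinguished by congruence conditions) and, more drastically, for an equivalence relation with infinitely many infinite classes, where distinct non-algebraic types over a model can even be orthogonal. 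Consequently ``the generic $1$-type over $B$'' is not well-defined, and producing some $b\in N\setminus\acl_{\cL_{M_0}}(\operatorname{im}(f))$ does not force $\tp_{\cL_{M_0}}(b/\operatorname{im}(f))$ to equal $f(\tp_{\cL_{M_0}}(a/\operatorname{dom}(f)))$ — $b$ could realize a different (possibly orthogonal) non-algebraic type. The dimension bookkeeping inherits the same problem: $a$ only contributes to $\dim_p(N/B)$ when $\tp(a/\acl_{\cL_{M_0}}(\operatorname{dom}(f)))$ is non-orthogonal to $p$, and the equality $\dim_p(\acl_{\cL_{M_0}}(\operatorname{dom}(f))/B)=\dim_p(\acl_{\cL_{M_0}}(\operatorname{im}(f))/B)$ needs an argument, since $f$ carries types non-orthogonal to $p$ to types non-orthogonal to $f(p)$, not to $p$.

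The repair is exactly where the paper does its real work, and it is also the one place the constants for $\cM_0$ are essential. Set $p:=\tp(a/\acl_{\cL_{M_0}}(\operatorname{dom}(f)))$ and $p':=f^*(p)$, where $f^*$ extends $f$ to the algebraic closures. Since $f^*$ is $\cL_{M_0}$-elementary, $p$ and $p'$ are regular types with the same restriction to the $|T|^+$-saturated model $\cM_0$, hence are non-orthogonal, so $\dim_p$ and $\dim_{p'}$ agree and all the dimensions you want to compare live in one non-orthogonality class. One then shows $\dim_{p'}(N/\acl_{\cL_{M_0}}(\operatorname{im}(f)))>0$ (by your additivity argument, plus a separate case when $\dim_p(N/B)$ is infinite, since Fact \ref{fact:weight}$(c)$ is stated for finite dimensions), and finally uses $|T|^+$-saturation of $\acl_{\cL_{M_0}}(\operatorname{im}(f))$ to upgrade non-orthogonality to non-weak-orthogonality and conclude that $p'$ itself — not merely some non-algebraic type — is realized in $N$. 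Without this orthogonality analysis the main case of your argument does not go through.
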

\begin{proof}
We will apply Proposition \ref{prop:5.3} with respect to the $\cL_{M_0}$-theory $T_{\cM_0}$. Given $A$ as in the statement, choose any sufficiently saturated  $(\cM^*,A^*)\succeq_{\cL_{M_0}(P)}(\cM,A)$. Fix finite $\bbar,\cbar\subset M^*$ and an $\cL_{M_0}$-elementary bijection $f\colon A^*\bbar\to A^*\cbar$ extending a permutation of $A^*$. Choose any $d\in M^*\backslash A^*\bbar$. It suffices to find $d'\in M^*$ such that $f\cup\{(d,d')\}$ is $\cL_{M_0}$-elementary. By Lemma \ref{lem:wmfacts}, the structures $A^*$, $\cM_1:=\acl_{\cL_{M_0}}(A^*\bbar)$, and $\cM_2:=\acl_{\cL_{M_0}}(A^*\cbar)$ are $|T|^+$-saturated models of $T_{\cM_0}$. Note that we can extend $f$ to an $\cL_{M_0}$-isomorphism $f^*\colon \cM_1\to\cM_2$. Let $p:=\tp(d/M_1)$ and $p':=f^*(p)\in S_1(M_2)$. We want to show that $p'$ is realized by some $d'\in M^*$. 
 
 Now, if $d\in M_1$ then we are done, so assume otherwise. Then $p$ and $p'$ are regular, and have the same restriction to $M_0$ since $f^*$ is $\cL_{M_0}$-elementary. In particular, $p$ and $p'$ are non-orthogonal, and so $\dim_{p'}(M^*/M_2)=\dim_p(M^*/M_2)$. So, to show $p'$ is realized in $\cM^*$, it suffices to show $\dim_{p}(M^*/M_2)>0$. For a contradiction, suppose $\dim_p(M^*/M_2)=0$. Note that $\dim_p(M_2/A_*)$ is finite since it is bounded above by $\dim_{\acl}(M_2/A^*)$. So  $\dim_p(M^*/A^*)=\dim_p(M_2/A^*)$ by Fact \ref{fact:weight}. Note also that $\dim_{p}(M_1/A^*)=\dim_p(M_2/A^*)$ since $f^*$ is an $\cL_{M_0}$-isomorphism extending a permutation of $A^*$. So $\dim_p(M^*/A^*)=\dim_p(M_1/A^*)$ which, by Fact \ref{fact:weight}, yields $\dim_p(M^*/M_1)=0$. But this contradicts our assumption that $d\not\in M_1$.
\end{proof}

We can now prove Theorem \ref{thm:wmt-bounded}.

\begin{proof}[Proof of Theorem \ref{thm:wmt-bounded}]
Assume $T$ is weakly minimal, $\cM\models T$, and $\cM_0\preceq\cM$. Choose $A\seq M$ arbitrarily. We want to show $A$ is bounded in $\cM$ with respect to $\cL_{M_0}$. Consider the $\cL(P,Q)$-structure $(\cM,A,M_0)$. Choose a $|T|^+$-saturated $\cL(P,Q)$-elementary extension $(\cM^*,A^*,M^*_0)$, and note that $M^*_0$ is the universe of a $|T|^+$-saturated $\cL$-elementary extension $\cM^*_0$ of $\cM_0$.

We now work with the theory $T_{\cM^*_0}$ in the language $\cL^*:=\cL_{M^*_0}$. Let $(\cN^*,B)\equiv_{\cL^*(P)}(\cM^*,A^*)$ be $|T_{\cM^*_0}|^+$-saturated. Let $B^*=\acl_{\cL^*}(B)$. We have that $T$ is weakly minimal, $\cM^*_0\models T$ is $|T|^+$-saturated, $\cM^*_0\preceq\cN^*$, and $B^*\seq N^*$ is $\cL^*$-algebraically closed. So we may apply Proposition \ref{prop:sat-version} to conclude that $B^*$ is bounded in $\cN^*$ with respect to $\cL^*$. Now, suppose $\cbar,\dbar\subset N^*$ are finite and $h\colon B\cbar\to B\dbar$ is an $\cL^*$-elementary bijection in $\cN^*$ extending a permutation of $B$. Then $h$ extends to an $\cL^*$-elementary bijection $h^*\colon B^*\cbar\to B^*\dbar$. Since $B^*$ is bounded in $\cN^*$ with respect to $\cL^*$, Proposition \ref{prop:5.3} implies that for every $a\in N^*$ there is an $a'\in N^*$ such that $h^*\cup\{(a,a')\}$ is $\cL^*$-elementary in $\cN^*$. Applying Proposition \ref{prop:5.3} again, we conclude that $B$ is bounded in $\cN^*$ with respect to $\cL^*$. By elementarity, $A^*$ is bounded in $\cM^*$ with respect to $\cL^*$.  

Now, fix any $\cL(P)$-formula $\phi(\xbar;\ybar)$ and let $\Gamma(\ybar)$ be the $\cL(P,Q)$-type 
\[
\{\ybar\in Q\}\cup\{\forall \zbar\in Q\,\neg\forall\xbar (\phi(\xbar;\ybar)\leftrightarrow \psi(\xbar;\zbar)):\psi(\xbar;\zbar)\text{ is a bounded $\cL(P)$-formula}\}.
\]
Since $A^*$ is bounded in $\cM^*$ with respect to $\cL^*$, $\Gamma(\ybar)$ is not realized in $\cN:=(\cM^*,A^*,M^*_0)$. By saturation of $\cN$, $\Gamma(\ybar)$ is inconsistent with $\Th(\cN)$. By compactness, there are finitely many bounded $\cL(P)$-formulas $\psi_1(\xbar;\zbar_1),\ldots,\psi_\ell(\xbar;\zbar_\ell)$ such that
\[
\cN\models\forall\ybar\in Q\bigvee_{i=1}^{\ell}\exists \zbar_i\in Q\,\forall\xbar(\phi(\xbar;\ybar)\leftrightarrow\psi_i(\xbar;\zbar_i)).
\]
So $(\cM,A,M_0)$ realizes this sentence, and so we see that for every $\abar\in M_0^{\ybar}$ there is $1\leq i\leq\ell$ and $\cbar\in M_0^{\zbar_i}$ such that $(\cM,A)\models\forall \xbar(\phi(\xbar;\bbar)\leftrightarrow\psi_i(\xbar;\cbar))$. 

As the $\cL(P)$-formula $\phi(\xbar;\ybar)$ above was arbitrary, we conclude that $A$ is bounded in $\cM$ with respect to $\cL_{M_0}$. 
\end{proof}

\begin{remark}\label{rem:assumptions}
We make some comments on the assumptions in Theorem \ref{thm:wmt-bounded}.
\begin{enumerate}[$(1)$]
\item  Theorem \ref{thm:wmt-bounded} cannot be generalized to arbitrary stable theories. For example, Poizat \cite{Poizpairs} constructed an $\omega$-stable theory $T$ and $\cN\prec\cM\models T$ such that the pair $(\cM,N)$ is unstable. By stability of $T$, $N_{\cM}$ is the same as $\cN$  and so, by Proposition \ref{prop:CZ-bounded}, $N$ is not bounded in $\cM$ (or any expansion by constants). In \cite{Bouspairs}, Bouscaren shows that if $T$ is superstable, then every theory of pairs of models of $T$ is stable if and only if $T$ does not have the dimensional order property.

\item The additional constants naming a substructure are necessary in order to prove Theorem \ref{thm:wmt-bounded}. For example,  let $T$ be the theory of an equivalence relation $E$ with two infinite classes. Fix $\cM\models T$ and distinct $a_1,a_2,b\in M$ such that $E(a_1,a_2)$ and $\neg E(a_1,b)$. Then $A=M\backslash\{a_1,a_2,b\}$ is not bounded in $\cM$. To see this, note that $a_1$ and $b$ clearly have different $\cL(P)$-types while, on the other hand, there is an $\cL$-elementary map from $Aa_1$ to $Ab$, extending a permutation of $A$, and so $a_1$ and $b$ satisfy the same bounded $\cL(P)$-formulas.
\end{enumerate}
\end{remark}

In \cite{PiLC}, Pillay proves that if $T$ is \emph{strongly minimal}, $\cM\models T$, and $A\seq M$, then $A$ is bounded in $\cM$ without adding any extra constants (see also \cite[Corollary 5.4]{CaZi}).  
Although it will not be necessary for our later results, it is interesting to see that the same holds for weakly minimal \emph{expansions of groups}. 

\begin{theorem}\label{thm:noconstants}
Suppose $T$ is the theory of a weakly minimal expansion of a group, and $\cM\models T$. Then every subset of $M$ is bounded in $\cM$.
\end{theorem}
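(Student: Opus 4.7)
The plan is to verify condition (ii) of Proposition \ref{prop:5.3} for $(\cM,A)$ directly in the language $\cL$, by adapting the dimension-counting argument of Proposition \ref{prop:sat-version} and replacing the role of the constants for $\cM_0$ by the unidimensionality of $T$. A weakly minimal expansion of a group is unidimensional: any two non-algebraic $1$-types over any (possibly different) bases are non-orthogonal, since each can be viewed as a translate of the generic type of the group via the $\cL$-definable group operation. Unidimensionality is precisely what the common restriction to $\cM_0$ was supplying in Proposition \ref{prop:sat-version}, so dispensing with the base amounts to substituting this property.

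Concretely, given a $|T|^+$-saturated $(\cN,B) \equiv_{\cL(P)} (\cM,A)$, an $\cL$-elementary map $f \colon B\bar c \to B\bar d$ extending a permutation of $B$, and $a \in N$, the goal is to produce $b \in N$ with $f \cup \{(a,b)\}$ being $\cL$-elementary. The algebraic case $a \in \acl_{\cL}(B\bar c)$ is handled by a saturation argument, transporting the finite fiber of a witnessing algebraic formula across $f$. In the main case $a \notin \acl_{\cL}(B\bar c)$, the type $p := \tp_{\cL}(a/B\bar c)$ is non-algebraic, hence regular by weak minimality. I would extend $f$ to an $\cL$-elementary bijection $f^* \colon \cN_1 \to \cN_2$, where $\cN_1 := \acl_{\cL}(B\bar c)$ and $\cN_2 := \acl_{\cL}(B\bar d)$, set $p' := f^*(p) \in S_1(N_2)$, and run the dimension count from Proposition \ref{prop:sat-version}. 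Since $p$ and $p'$ are non-algebraic and $T$ is unidimensional, $p \not\perp p'$ automatically, so Fact \ref{fact:weight}(a) applies and the same computation yields $\dim_{p'}(\cN/\cN_2) > 0$, giving the required realization~$b$.

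The main obstacle will be ensuring that $\cN_1$ and $\cN_2$ are $|T|^+$-saturated $\cL$-elementary submodels of $\cN$, a fact that in Proposition \ref{prop:sat-version} came for free from Lemma \ref{lem:wmfacts} because $\cM_0 \preceq A^*$. Without a base model inside $B$, one must install one, and this is where the ``expansion of a group'' hypothesis is essential. The strategy is to first pass to a more saturated extension of $(\cN,B)$ and extend $f$ to fix a single generic element $g \in N$ (chosen generic over $B\bar c \bar d a$); in a weakly minimal expansion of a group, translation by $g$ produces enough $\cL$-definable bijections that $\acl_{\cL}(g)$ contains an $\cL$-elementary submodel of $\cN$, which can then play the role of $\cM_0$ in Lemma \ref{lem:wmfacts} and guarantee the required saturation and elementarity of $\cN_1$ and $\cN_2$. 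Making this insertion rigorous—so that the previous paragraph's argument can be run verbatim—is the technical crux; once done, the output of Proposition \ref{prop:5.3}(ii) holds and $A$ is bounded in $\cM$ in the language $\cL$.
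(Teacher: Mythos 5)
Your reduction correctly identifies the obstacle — without a base model inside $B$, the sets $\acl_{\cL}(B\bar c)$ and $\acl_{\cL}(B\bar d)$ need not be (saturated) models of $T$, so the argument of Proposition~\ref{prop:sat-version} cannot be run verbatim — but the proposed fix fails. For a single element $g$, $\acl_{\cL}(g)$ is never $|T|^+$-saturated (it has size at most $|T|$), so it cannot play the role that the $|T|^+$-saturated $\cM_0$ plays in Lemma~\ref{lem:wmfacts} and Proposition~\ref{prop:sat-version}; and it need not even be a model: in $\Th(\Z,+)$, if $g$ realizes the principal generic type then $\acl_{\cL}(g)$ is the divisible hull of $\Z g$, isomorphic to $\Q$, which is not a model of $\Th(\Z,+)$. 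Fixing a countable tuple $\bar g$ whose algebraic closure is a saturated model is possible in principle, but extending $f$ to fix $\bar g$ pointwise while remaining $\cL$-elementary is essentially the passage to $\cL_{M_0}$-boundedness (the contents of Theorem~\ref{thm:wmt-bounded}) rather than a proof of $\cL$-boundedness. There is also a minor imprecision: unidimensionality (Fact~\ref{fact:WMGuni}) is about non-algebraic \emph{stationary} types, while $\tp(a/B\bar c)$ over a non-model $B\bar c$ need not be stationary, so you must pass to strong types before invoking non-orthogonality.

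The paper's own proof sidesteps the need for a base model entirely. It defines a dimension $\dim_p(C/B)$ for a \emph{strong type $p$ over $\emptyset$} and arbitrary sets $B\seq C$, working over algebraically closed sets $C_1=\acl(B\bar c)$ and $C_2=\acl(B\bar d)$ that are \emph{not} assumed to be models or saturated. The key replacement for Fact~\ref{fact:weight}$(a)$ is a group-theoretic Claim: if $p,q$ are non-algebraic strong types over $\emptyset$ both realized in $\acl(C)$, then $\dim_p(G/C)=\dim_q(G/C)$, proved directly by the translation $a\mapsto ab_0^{\text{-}1}c_0$ (here is where the group structure actually enters). The argument then splits into three cases governed by whether $p$ and the principal generic $r$ are realized in $C_1$, using saturation of $(\cG,B)$, the Claim, additivity of $\dim_r$, and $\emptyset$-invariance of $r$ to conclude $\dim_q(G/C_2)>0$. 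You should rebuild your argument along those lines rather than trying to force a base model into the picture.
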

\begin{proof}
Fix $A\seq M$ and let $(\cG,B)\succeq_{\cL(P)}(\cM,A)$ be $|T|^+$-saturated. Fix finite $\cbar,\dbar\subset G$ and suppose $f\colon B\cbar\to B\dbar$ is a partial $\cL$-elementary map extending a permutation of $B$. Fix $a\in G$. We want to find  $b\in G$ such that $f\cup\{(a,b)\}$ is $\cL$-elementary. Then $A$ will be bounded in $\cM$ by Proposition \ref{prop:5.3}. 

For the rest of the proof, we work in $T$. We will focus on $1$-types over $\acl^{\eq}(\emptyset)$, for which we have the following simplified version of the dimension used above. Given  $p\in S_1(\acl^{\eq}(\emptyset))$ and algebraically closed sets $C\seq D\seq G$, let $\dim_p(D/C)$ be the cardinality of a maximal $C$-independent subset of $p(D)$ (which is well-defined by weak minimality). 

\medskip

\noindent\emph{Claim:} Fix $p,q\in S_1(\acl^{\eq}(\emptyset))$ and algebraically closed sets $C\seq D\seq G$.
\begin{enumerate}[$(a)$]
\item $\dim_p(G/C)$ is finite if and only if $\dim_p(G/D)$ and $\dim_p(D/C)$ are finite, and in this case $\dim_p(G/C)=\dim_p(G/D)+\dim_p(D/C)$.
\item If  $p$ and $q$ are both realized in $C$, then $\dim_p(G/C)=\dim_q(G/C)$.
\end{enumerate}

\noindent\emph{Proof}: 
Part $(a)$ follows from the fact that if $I\seq p(G)$ is a maximal $D$-independent set and $J\seq p(D)$ is a maximal $C$-independent set, then $I\cap J=\emptyset$ and, by exchange, $I\cup J$ is a maximal $C$-independent subset of $p(G)$.

Part $(b)$. It suffices to show $\dim_p(G/C)\leq\dim_q(G/C)$. Fix $b_0,c_0\in C$ realizing $p$ and $q$, respectively. Given any $C$-independent set $I\seq p(G)$, let $J=\{ab\inv_0 c_0:a\in I\}$. Then we clearly have that $J$ is $C$-independent, and that $|J|=|I|$. Moreover, for any $a\in I$, we have $\text{stp}(a)=\text{stp}(b_0)$, and so $ab_0\inv\in G^0=\text{Stab}(q)$, which implies $ab\inv_0c_0\models q$. So $J\seq q(G)$ and, altogether, $\dim_p(G/C)\leq \dim_q(G/C)$.\claim 

\medskip

Now let $C_1=\acl(B\cbar)$ and $C_2=\acl(B\dbar)$.  Without loss of generality, we may extend $f$ so that it is a map from $C_1$ to $C_2$.  Let $p=\text{stp}(a)$, and let $\cG^*$ be a sufficiently saturated elementary extension of $\cG$. Choose $b_*\in G^*$ such that $f\cup \{(a,b_*)\}$ is elementary, and let $q=\text{stp}(b_*)$. If $b_*\in G$ then we are done, so assume otherwise. In particular, $b_*\not\in C_2$, which implies $a\not\in C_1$ and $b_*\ind_\emptyset C_2$. To find our desired $b$, it suffices by the stationarity of $q$ to find $b\in G\backslash C_2$ realizing $q$. In other words, we want to show $\dim_q(G/C_2)>0$.

Suppose first that $p$ is not realized in $C_1$. Since $\cG$ is $|T|^+$-saturated, there is a realization $b$ of $q$ in $G$. Toward a contradiction, suppose $b\in C_2$. Then $\text{stp}(b_*)=\text{stp}(b)$, and so $b_*\inv b\in (G^*)^0$. Then $a\inv f\inv(b)\in G^0$, and so $\text{stp}(a)=\text{stp}(f\inv(b))$, which contradicts that $p$ is not realized in $C_1$.

Next, let $r\in S_1(\acl^{\eq}(\emptyset))$ be the principal generic. Suppose $r$ is not realized in $C_1$. Since $r$ is $\emptyset$-invariant, it is also not realized in $C_2$. Note that if $b_1,b_2\models q$, with $b_1\ind_\emptyset b_2$, then $b_1\inv b_2\models r$. So we have $\dim_q(C_2/\emptyset)\leq 1$. Since $\dim_q(G/\emptyset)$ is infinite (by $|T|^+$-saturation of $\cG$), it follows that $\dim_q(G/C_2)$ is infinite.  

Finally, suppose $p$ and $r$ are both realized in $C_1$. As above, $r$ is realized in $C_2$. Also $q$ is realized in $C_2$ since $f(p(C_1))\seq q(C_2)$. By part $(b)$ of the claim, 
\[
\dim_p(G/C_1)=\dim_r(G/C_1)\makebox[.5in]{and}\dim_q(G/C_2)=\dim_r(G/C_2).
\]
In particular, we may assume $\dim_r(G/C_2)$ is finite. Note also that $\dim_r(C_2/B)$ is finite since it is bounded above by $\dim_{\acl}(C_2/B)$. By part $(a)$ of the claim,
\[
\dim_r(G/C_1)+\dim_r(C_1/B)=\dim_r(G/B)=\dim_r(G/C_2)+\dim_r(C_2/B).
\]
Since $f\colon C_1\to C_2$ extends a permutation of $B$, and $r$ is $\emptyset$-invariant, we also have $\dim_r(C_1/B)=\dim_r(C_2/B)$, and so $\dim_r(G/C_1)=\dim_r(G/C_2)$. Altogether, this yields $\dim_p(G/C_1)=\dim_q(G/C_2)$. Since $\dim_p(G/C_1)>0$ (witnessed by $a$), we have $\dim_q(G/C_2)>0$.  
\end{proof}

\section{Expansions by unary predicates}\label{sec:WMAG}

In this section, we give our main general result on preservation of stability and nfcp in expansions of  weakly minimal abelian groups  (see Theorem \ref{thm:MAG-main} below). Along with the work from Section \ref{sec:WMG}, the proof will require several more ingredients.

\subsection{Weakly minimal abelian groups}

By a \emph{pure} group, we mean a group  as a structure in the group language. It is well-known that any pure abelian group is stable and eliminates quantifiers in the expansion by binary relations for equivalence modulo $n$ for all $n\geq 1$ (see, e.g., \cite{IbKiTa}). By a \textbf{weakly minimal abelian group}, we mean an infinite abelian group $(G,+)$ whose pure theory is weakly minimal. The following folklore result gives an algebraic characterization of such groups.

\begin{proposition}\label{prop:wmagchar}
An infinite abelian group $(G,+)$ is weakly minimal if and only if, for all $n\geq 1$, the subgroups  $nG=\{nx:x\in G\}$ and $\textnormal{Tor}_n(G)=\{x\in G:nx=0\}$ are each either finite or of finite index.
\end{proposition}
\begin{proof}
It is well-known that a pure abelian group is weakly minimal if and only if every definable subgroup is  finite or of finite index (see, e.g., \cite[Proposition 2.1]{HruLov}). So suppose we only know that this holds for the subgroups $nG$ and $\textnormal{Tor}_n(G)$. Given $\cM\succeq (G,+)$, any definable subset of $M$ is a finite Boolean combination of sets defined by $mx=r$ or $mx\equiv_n r$ where $m,n\geq 1$ and $r\in M$. Using the assumption on $G$, it is straightforward to check that any such set is finite or $\cL_G$-definable. It follows that $(G,+)$ is weakly minimal (see \cite[Theorem 21]{BPW}).
\end{proof}

We will say that $(G,+)$ has \textbf{finite torsion} if $\textnormal{Tor}_n(G)$ is finite for all $n\geq 1$.

\begin{proposition}\label{prop:wmagacl}
If $(G,+)$ is a weakly minimal abelian group with finite torsion and $A\seq G$, then  $\acl(A)$ is the divisible hull of the subgroup generated by $A$.
\end{proposition}
\begin{proof}
Fix $A\seq G$ and let $H$ be the divisible hull of $\langle A\rangle $. Then $H\seq \acl(A)$ since $G$ has finite torsion. So suppose $\phi(x)$ is an algebraic formula with parameters from $A$. By quantifier elimination, we may assume $\phi(x)$ is a conjunction of  formulas  of the form $mx=r$, $mx\equiv_n r$, and their negations, where $m,n\geq 1$ and $r\in \langle A\rangle$. Since $G$ has finite torsion, any formula of the form $mx=r$ has finitely many solutions, which all lie in $H$. Moreover, if $n\geq 1$ then $nG\cong G/\textnormal{Tor}_n(G)$ is infinite, and thus has finite index. So the negation of a formula of the form $mx\equiv_n r$ is equivalent to a finite disjunction of positive instances of such formulas. Altogether, we may assume $\phi(x)$ is a conjunction of formulas of the form $mx\equiv_n r$. As $nG$ is infinite for all $n\geq 1$, such a formula either has infinitely many solutions or no solutions.
\end{proof}

\begin{definition}\label{def:indqf}$~$
\begin{enumerate}
\item Given an $\cL$-structure $\cM$, and a set $A\seq M$, let $A^{\qf}_{\cM}$ denote the reduct of $A_{\cM}$ to relations of the form $A^n\cap X$, for any $n\geq 1 $ and $X\seq M^n$  definable by a quantifier-free $\cL_M$-formula. 
\item We say that two structures $\cM_1$ and $\cM_2$, with the same universe $M$ (but possibly different languages), are \textbf{interdefinable} if, for any $n\geq 1$ and $X\seq M^n$, $X$ is $\cM_1$-definable if and only if it is $\cM_2$-definable.
\end{enumerate}
\end{definition}

The following is an immediate consequence of quantifier elimination. 

\begin{proposition}\label{prop:wmagind}
If $\cG=(G,+)$ is an abelian group and $A\seq G$ then $A_{\cG}$ is interdefinable with the expansion of $A_{\cG}^{\qf}$ by unary predicates $A\cap nG$ for $n\geq 1$.
\end{proposition}

\subsection{Mutually algebraic structures}
In the examples of stable expansions of $\cG=(\Z,+)$ from \cite{CoSS}, \cite{CoGG}, \cite{LaPo}, and \cite{PaSk}, the sets $A\seq\Z$ of interest turn out to satisfy the property that that \emph{any} expansion of $A^{\qf}_{\cG}$ by unary predicates is stable. In this section, we describe a robust model-theoretic setting for this behavior, which will be crucial for later results.  Let $\cA$ denote an arbitrary $\cL$-structure with universe $A$.

\begin{definition}\label{def:MA}
Fix an $\cL_A$-formula $R(z_1,\ldots,z_n)$. We say that $R$ is \textbf{mutually algebraic} if there is an integer $N\geq 1$ such that, for any $1\leq i\leq n$ and any $b\in A$, 
\[
\left|\left\{(a_1,\ldots,a_{n-1})\in A^{n-1}:\cA\models R(a_1,\ldots,a_{i-1},b,a_i,\ldots,a_{n-1})\right\}\right|\leq N.
\]
(In particular, any unary formula is mutually algebraic.)

Let $\cL_R$ be the language containing just the relation $R(\zbar)$. Given a nonempty tuple $\xbar\seq\zbar$ and a finite set $B\seq A$, let $S^{R}_{\xbar}(B)$ be the set of complete quantifier-free $\cL_R$-types realized in $\cA$, which are  in the variables $\xbar$, and over parameters from $B$.

Given $m\geq 1$, $\xbar\seq\zbar$ nonempty, and $B\seq A$ finite, we say that a type $p\in S^R_{\xbar}(B)$ \textbf{supports an $m$-array} if  $p$ has at least $m$ pairwise disjoint realizations in $\cA$ (where $\abar,\bbar\models p$ are \emph{disjoint} if $\abar\cap\bbar=\emptyset$). We say $R$ has \textbf{uniformly bounded arrays in $\cA$} if there are $m,N\in\N$ such that, for any nonempty tuple $\xbar\seq\zbar$ and any finite $B\seq A$, at most $N$ types in $S^R_{\xbar}(B)$ support an $m$-array.
\end{definition}

\begin{theorem}[\textnormal{Laskowski; Laskowski \& Terry}]\label{thm:MAfacts}
The following are equivalent.
\begin{enumerate}[$(i)$]
\item $\Th(\cA)$ is weakly minimal with trivial forking.\footnote{Recall that a (stable) theory has \emph{trivial forking} if, for any tuples $a,b,c$ and set $D$ in some model, if $\tp(a/Dbc)$ forks over $D$ then either $\tp(a/Db)$ or $\tp(a/Dc)$ forks over $D$ (see, e.g., \cite{JBGoode}).}
\item Every atomic $\cL$-formula is equivalent (modulo $\Th_{\cL_A}(\cA)$) to a Boolean combination of mutually algebraic $\cL_A$-formulas.
\item Every atomic $\cL$-formula has uniformly bounded arrays in $\cA$.
\end{enumerate}
\end{theorem}
\begin{proof}
See Proposition 2.7 and Theorem 3.3 of \cite{LaskMAS} for the equivalence of $(i)$ and $(ii)$. See \cite[Theorem 7.3]{LasTe1} for the equivalence with $(iii)$. 
\end{proof}

Following \cite{LaskMAS}, we call $\cA$ \textbf{mutually algebraic} if it satisfies the conditions in Theorem \ref{thm:MAfacts}. Each of these formulations of mutually algebraicity will be crucially used in our subsequent work. We will also need the following results. 

\begin{theorem}[\textnormal{Laskowski}]\label{thm:MAredexp}
If $\cA$ is mutually algebraic, and $\cM$ is a reduct of $\cA$ or an expansion of $\cA$ by unary predicates, then $\cM$ is mutually algebraic.
\end{theorem}
\begin{proof} 
This follows from Theorem 3.3 and Lemma 2.10 of \cite{LaskMAS}.  
\end{proof}

\begin{corollary}\label{cor:MAinter}
Suppose $\cA$ is mutually algebraic and $\cM$ is a first-order structure interpretable in $\cA$. If the universe of $\cM$ has $U$-rank $1$ as a definable set in $\cA^{\eq}$, then $\cM$ is mutually algebraic. 
\end{corollary}
\begin{proof}
Let $M$ be the universe of $\cM$, which we view as a definable set in $\cA^{\eq}$. Then $\cM$ is a reduct of the $\cA^{\eq}$-induced structure on $M$ and so, by Theorem \ref{thm:MAredexp}, we may assume $\cM=M_{\cA}$. Since $M$ is definable, it is bounded in $\cA^{\eq}$. Since $\cA^{\eq}$ is stable and $M$ has $U$-rank $1$ as a definable set, it follows that $\cM$ is weakly minimal (see, e.g., \cite[Theorem 2.10]{CoSS}). Since $\Th(\cA)$ has trivial forking, so does $\Th(\cA^{\eq})$ by \cite[Lemma 1]{JBGoode}. From this one can show that $\Th(\cM)$ has trivial forking (see, e.g., \cite[Proposition 2.7]{CoSS}). So $\cM$ is mutually algebraic by Theorem \ref{thm:MAfacts}. 
\end{proof}

\subsection{Small sets and nfcp}
Let $T$ be a complete $\cL$-theory. Recall that $T$ has \textbf{nfcp}  if for any $\cL$-formula $\phi(\xbar;\ybar)$ there is $k\geq 1$ such that, for any $\cM\models T$ and $B\seq M^{\ybar}$, the partial type $\{\phi(\xbar;\bbar):\bbar\in B\}$ is consistent if and only if it is $k$-consistent. 

\begin{fact}\label{fact:wmEinfty}$~$
\begin{enumerate}[$(a)$]
\item $T$ has nfcp if and only if it is stable and eliminates $\exists^\infty$ in all imaginary sorts.
\item If $T$ is weakly minimal then it has nfcp. 
\end{enumerate}
\end{fact}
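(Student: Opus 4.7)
The plan for part (a), which is a classical characterization of nfcp due to Shelah, is to sketch both directions. For $(\Rightarrow)$: unstability produces the order property, which in turn witnesses fcp via the auxiliary formula $\phi(\bar x;\bar y) \wedge \lnot\phi(\bar x;\bar y')$, so nfcp forces stability. Elimination of $\exists^\infty$ in the home sort then follows by applying nfcp directly to $\phi(x;\bar y) \wedge \bigwedge_{i<k} x \ne z_i$: the resulting threshold $k$ uniformly bounds $|\phi(M;\bar b)|$ whenever this set is finite. To extend to imaginary sorts I would invoke the standard fact that, in a stable theory, nfcp passes to $T^{\eq}$. For $(\Leftarrow)$, given $\phi(\bar x;\bar y)$, I would use stable definability to replace each parameter instance by a canonical parameter in a fixed imaginary sort $S_\phi$; elimination of $\exists^\infty$ in $S_\phi$, together with a compactness argument encoding ``finite satisfiability'' as a first-order statement about these canonical parameters, yields the uniform consistency threshold demanded by nfcp.

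For part (b), stability is immediate from the definition of weakly minimal, so the substantive content is elimination of $\exists^\infty$. In the home sort I would argue as follows: if $\phi(x;\bar b)$ is finite, then weak minimality (absence of nonalgebraic forking extensions of 1-types) forces every realization to lie in $\acl(\bar b)$, and the algebraic degree of any complete 1-type isolated by an instance of $\phi$ is bounded uniformly in terms of $\phi$ alone (a consequence of definability of algebraic degrees in stable theories together with the home sort having $U$-rank at most $1$, which prevents an infinite family of finite-but-unbounded instances). Choosing $k$ above this uniform bound guarantees $|\phi(x;\bar b)| \ge k$ implies $|\phi(x;\bar b)| = \infty$.

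The main obstacle will be lifting the $\exists^\infty$ elimination from the home sort to arbitrary imaginary sorts. My plan is to use that $T^{\eq}$ inherits superstability with a well-behaved $U$-rank, so that a definable family of finite sets in an imaginary sort $S = M^n/E$ can be analyzed by pulling back along the quotient map $M^n \to S$ to a home-sort formula; stable definability then uniformizes the algebraic degrees of the fibers across the family, and the home-sort bound transfers to give the required uniform $k$ in $S$. Combining this with part (a) then yields that $T$ has nfcp.
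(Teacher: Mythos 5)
The paper does not give a mathematical proof here; it simply cites Shelah's fcp theorem \cite[Theorem~II.4.4]{Shbook} for part (a) and Section 2 of Gagelman \cite{GagGT} for part (b). You instead attempt to supply the underlying arguments. For part (a) your sketch follows the standard shape of Shelah's proof and is defensible as a sketch, though you should be aware that both directions (especially passing $\exists^\infty$-elimination from the home sort to $T^{\eq}$, and the $(\Leftarrow)$ compactness argument via canonical parameters) are substantive portions of the fcp theorem and not routine.

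Part (b) has a genuine gap. Stability is indeed part of the definition, but your argument for elimination of $\exists^\infty$ in the home sort is circular: the phrase ``the home sort having $U$-rank at most $1$\ldots prevents an infinite family of finite-but-unbounded instances'' is precisely a restatement of elimination of $\exists^\infty$, not a justification of it. There is no general ``definability of algebraic degrees'' that yields a uniform bound on $|\phi(M,\bar b)|$ over the finite instances; this is exactly what must be proved, and the proof (in Gagelman's Section~2) hinges on the fact that in a weakly minimal theory $\acl$ satisfies exchange, which your sketch never invokes. The transfer to imaginary sorts is also not as straightforward as ``pull back along the quotient map'': if $\phi(z,\bar y)$ lives on $M^n/E$, its pullback to $M^n$ can be infinite even when $\phi(S,\bar b)$ is finite (the finitely many $E$-classes need not be finite), and $M^n$ itself has $U$-rank $n>1$, so the home-sort bound does not directly apply. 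You should either reproduce the actual argument from \cite{GagGT} or simply cite it, as the paper does.
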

\begin{proof}
Part $(a)$ is one of the equivalences of Shelah's \emph{fcp theorem} \cite[Theorem II.4.4]{Shbook}. For part $(b)$, it follows from Section 2 of \cite{GagGT} that any weakly minimal theory eliminates $\exists^\infty$ in all imaginary sorts. 
\end{proof}

Given $\cM\models T$ and $A\seq M$, Casanovas and Ziegler \cite{CaZi} provide a test for transferring nfcp from $\cM$ and $A_{\cM}$ to $(\cM,A)$. 

\begin{definition}\label{def:small}
Suppose $\cM\models T$ and $A\seq M$. Then $A\seq M$ is \textbf{small in $\cM$} if there is $(\cN,B)\equiv_{\cL(P)}(\cM,A)$ such that, for any finite tuple $\bbar$ from $N$, any type $p\in S_1^{\cL}(B\bbar)$ is realized in $\cN$. 
\end{definition}

The following is \cite[Proposition 5.7]{CaZi}.
  
 \begin{proposition}[Casanovas \& Ziegler]\label{prop:CZsmall}
 Suppose $\cM\models T$ and $A\seq M$ is small in $\cM$. Then $(\cM,A)$ has nfcp if and only if $\cM$ and $A_{\cM}$ have nfcp.
\end{proposition}

\begin{remark}\label{rem:small}
If $\cM\models T$ and $A\seq M$ is $\cM$-definable, then $A$ is small in $\cM$ if and only if it is finite.
\end{remark}

Recall that $T$ is \textbf{unidimensional} if any two non-algebraic stationary types are non-orthogonal. This notion is of interest to us due to the following standard fact (see, e.g., \cite[Remark 4.5.11]{Pibook}).

\begin{fact}\label{fact:WMGuni}
If $T$ is weakly minimal and expands a group then it is unidimensional.
\end{fact}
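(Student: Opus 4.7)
The plan is to use the group operation to exhibit non-orthogonality of any two non-algebraic stationary types directly. Given such types $p$ and $q$, I would first reduce to the case where both are based over a common model $\cM\models T$: embed their domains into an $|T|^+$-saturated ambient model, pick an elementary substructure $\cM$ containing those domains, and replace $p,q$ by their (unique, since $\cM$ is a model) non-forking extensions to $\cM$. Because $\cM\preceq\cG$ is an elementary substructure of the ambient group, $\cM$ is itself a subgroup, so products involving elements of $\cM$ stay well-defined.

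Next, using stationarity, I would take realizations $a\models p$ and $b\models q$ with $a\ind_M b$, and set $c:=a\cdot b$. The key claim is that $c\notin\acl(Ma)$ and $c\notin\acl(M)$: in either case $b=a^{-1}c$ would lie in $\acl(Ma)$, and since $b\notin\acl(M)$ (because $q$ is non-algebraic) this contradicts $a\ind_M b$. Since $T$ has $U$-rank $1$, $c\notin\acl(Ma)$ forces $U(c/Ma)=U(c/M)=1$, hence $c\ind_M a$; the symmetric argument, using $a=cb^{-1}$, gives $c\ind_M b$.

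The final step is to look at extensions to the base $Mc$. By the previous paragraph $\tp(a/Mc)$ is the (unique) non-forking extension of $p$ to $Mc$, and similarly for $b$ and $q$. On the other hand, $b=a^{-1}c\in\acl(Mca)$, while $b\notin\acl(Mc)$ (else $\tp(b/Mc)$ would be algebraic, contradicting $b\ind_M c$ together with $b\notin\acl(M)$). Hence $a\nind_{Mc} b$, witnessing $p\not\perp q$.

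The only real obstacle is bookkeeping: confirming that one may put $p$ and $q$ over a common model without losing non-algebraicity or stationarity, and tracking the implications of weak minimality to conclude that ``not in the algebraic closure'' is the same as ``independent''. The algebraic content of the argument is simply that in a group the product of two independent non-algebraic elements is again non-algebraic and independent from each factor, yet together with either factor recovers the other.
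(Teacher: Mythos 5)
Your argument is correct, and it takes a somewhat different route from the paper's. The paper's proof (following Pillay) is a two-line appeal to genericity: by weak minimality every non-algebraic $1$-type over a sufficiently saturated model $G$ is generic, the generics over a model form a single orbit under translation by $G$, so $q=gp$ for some $g\in G$ and non-orthogonality is immediate since realizations of $p$ and $q$ are then interdefinable over $G$. You instead avoid the machinery of generic types and the orbit structure altogether: you translate by a \emph{new} generic element, namely $c=a\cdot b$ for independent realizations $a\models p$, $b\models q$ over a common model $\cM$, and verify by direct $U$-rank/$\acl$ bookkeeping that $c\ind_M a$, $c\ind_M b$, while $a$ and $b$ become interalgebraic over $Mc$; this exhibits non-weakly-orthogonal non-forking extensions over $Mc$ and hence $p\not\perp q$. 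All the steps check out (in $U$-rank $1$, forking over a model is exactly falling into the algebraic closure, which is what drives each implication), and the argument works equally well for non-abelian groups. What the paper's approach buys is brevity, at the cost of quoting the standard theory of generics in stable groups; yours is longer but self-contained, using only the basic forking calculus. Both proofs, like the paper's, silently reduce to $1$-types (your product $a\cdot b$ only makes sense for elements of the group), which is the standard reduction for unidimensionality in a weakly minimal theory.
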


The next proposition gives a characterization of small sets in weakly minimal unidimensional theories, which generalizes a result of Casanovas and Ziegler for strongly minimal theories (see the remarks following \cite[Corollary 5.4]{CaZi}).

\begin{proposition}\label{prop:wmg-small}
Suppose $T$ is weakly minimal and unidimensional. Then $A\seq M$ is not small in $\cM$ if and only if there is a finite tuple $\mbar$ from $M$ such that if $(\cN,B)\succeq_{\cL(P)}(\cM,A)$, then $N=\acl_{\cL}(B\mbar)$. 
\end{proposition}
\begin{proof}
First assume $A\seq M$ is not small in $\cM$. Let $(\cN,B)\succeq_{\cL(P)}(\cM,A)$ be $|T_M|^+$-saturated, and let $(\cU,C)\succeq_{\cL(P)}(\cN,B)$ be $|N|^+$-saturated. By assumption on $A$, there is a finite tuple $\dbar\subset U$ and a type $p\in S_1^{\cL}(NC\dbar)$ such that $p$ is not realized in $\cU$. Let $p_0\in S_1^{\cL}(N)$ be the restriction of $p$. We claim that $p_0(U)\seq \acl_{\cL}(NC\dbar)$. Suppose otherwise that there is $a\in p_0(U)\backslash \acl_{\cL}(NC\dbar)$, and let $q=\tp_{\cL}(a/NC\dbar)$. Then $p$ and $q$ are non-algebraic types with the same restriction to $N$, and so $p=q$ by weak minimality and stationarity, which contradicts that $p$ is not realized in $\cU$. 

Next, we claim that $U=\acl_{\cL}(NC\dbar)$. So fix $b\in U\backslash N$, and let $q_0=\tp_{\cL}(b/N)$. Then $q_0$ is non-algebraic, and thus non-orthogonal to $p_0$ by unidimensionality. By saturation of $\cN$, $q_0$ is not weakly orthogonal to $p_0$. So there is $a\models p_0$ such that $b\in \acl_{\cL}(aN)$. By the above, $a\in p_0(U)\seq\acl_{\cL}(NC\dbar)$, and so $b\in\acl_{\cL}(NC\dbar)$. 

Altogether, $(\cU,C)$ omits the partial type
\[
\pi(x)=\left\{\forall\ybar\in P\left(\exists^{<\infty}u\,\phi(u;\ybar)\rightarrow\neg\phi(x;\ybar)\right):\phi(x;\ybar)\text{ is an $\cL_{N\dbar}$-formula}\right\}.
\]
By saturation of $(\cU,C)$, we may choose an $\cL$-formula $\phi(x;\ybar;\zbar)$, and some $\bbar\in (N\dbar)^{\zbar}$ such that $\phi(x;\cbar;\bbar)$ is algebraic for all $\cbar\in C^{\ybar}$, and $U=\bigcup_{\cbar\in C^{\ybar}}\phi(U;\cbar;\bbar)$. So the desired result follows from  $\cL(P)$-elementarity.  

Conversely, suppose $A\seq M$ satisfies the latter condition. As in the arguments above, we can use elimination of $\exists^\infty$ and saturated extensions of $(\cM,A)$ to express this assumption elementarily. In particular, for any $(\cN,B)\equiv_{\cL(P)}(\cM,A)$ there is a finite tuple $\cbar$ from $N$ such that $N=\acl_{\cL}(B\cbar)$, and so any non-algebraic $p\in S_1^{\cL}(B\cbar)$ is omitted in $\cN$. This shows that $A$ is not small in $\cM$. 
\end{proof}

\begin{remark}
In the previous result,  the assumption of unidimensionality cannot be removed. For example, let $T$ be the theory of an equivalence relation $E$ with two infinite classes. Fix $\cM\models T$ and let $A\seq M$ be one $E$-class. Then $T$ is weakly minimal (but not unidimensional), $A$ is not small in $\cM$ by Remark \ref{rem:small}, and if $(\cN,B)\equiv_{\cL_M(P)}(\cM,A)$ is $\aleph_1$-saturated then $N\neq \acl_{\cL}(BM)$. 
\end{remark}

Next, we refine Proposition \ref{prop:wmg-small} for the case of weakly minimal abelian groups with finite torsion. In particular, we show that small sets in such groups are characterized by a combinatorial sparsity condition analyzed in \cite{CoSS} for the case of $(\Z,+)$.

\begin{definition}
Let $(G,+)$ be an abelian group, and suppose $A\seq G$.
\begin{enumerate}
\item Given $n\in\Z$, let $nA=\{nx:x\in A\}$.
\item Given $n\geq 1$, let $\Sigma^{\pm}_n(A)=\{a_1+\ldots+a_n:a_1,\ldots,a_n\in A\cup\nv A\cup\{0\}\}$.
\item $A$ is \textbf{generic} if there is a finite set $F\seq G$ such that $G=A+F$. 
\end{enumerate}
\end{definition}

\begin{proposition}\label{prop:wmag-small}
Let $\cG$ be a weakly minimal abelian group with finite torsion. Then $A\seq G$ is small in $\cG$ if and only if $\Sigma^{\pm}_n(A)$ is non-generic for all $n\geq 1$.
\end{proposition}
\begin{proof}
Let $\cL$ denote the language of groups. Suppose $A\seq G$ is such that $G=\Sigma^{\pm}_n(A)+F$ for some $n\geq 1$ and finite $F\subset G$. For any $(\cN,B)\succeq_{\cL(P)}(\cG,A)$ we have $N=\Sigma^{\pm}_n(B)+F\seq\acl_{\cL}(B\cup F)$. So $A$ is not small in $\cG$ by Proposition \ref{prop:wmg-small}.

Conversely, suppose $A\seq G$ is not small in $\cG$. Let $(\cN,B)\succeq_{\cL(P)}(\cG,A)$ be saturated. By Proposition \ref{prop:wmg-small}, $N=\acl_{\cL}(B\cup F)$ for some finite $F\subset G$. Given $k,n\geq 1$, set $X_{k,n}=\{x\in N:mx\in\Sigma^{\pm}_n(B\cup F)\text{ for some }1\leq m\leq k\}$. 
Then each $X_{k,n}$ is $(\cN,B)$-definable, and $\acl_{\cL}(B\cup F)=\bigcup_{k,n}X_{k,n}$ by Proposition \ref{prop:wmagacl}. So $N=\bigcup_{n,k}X_{k,n}$ which, by saturation of $(\cN,B)$, implies that there are $k,n\geq 1$ such that $N=X_{k,n}$. By $\cL(P)$-elementarity, it follows that for any $x\in G$ there is $m\leq k$ such that $mx\in \Sigma^{\pm}_n(A\cup F)$. 

Now, setting $\ell=k!$, we have $\ell G\seq \sum^{\pm}_{\ell n}(A\cup F)\seq \sum^{\pm}_{\ell n}(A)+\sum^{\pm}_{\ell n}(F)$ by the above. So $\sum^{\pm}_{\ell n}(A)$ is generic since $\ell G$ is generic and $\sum^{\pm}_{\ell n}(F)$ is finite.\footnote{The authors are grateful to the referee for simplifying the end of this proof.}
\end{proof}

\begin{remark}\label{rem:SS}
Let $\cG$ be a weakly minimal abelian group with finite torsion, and suppose $A\seq G$. Proposition \ref{prop:wmag-small} shows that $A$ is small in $\cG$ if and only if the iterated sumsets $\Sigma^{\pm}_n(A)$ are ``algebraically small" in the sense of being non-generic. In fact, this can be strengthened to say that $A$ is small in $\cG$ if and only if $\Sigma^{\pm}_n(A)$ has (upper) Banach density $0$ for all $n\geq 1$. This follows from the standard exercise that if a set $X\seq G$ has positive Banach density, then $X-X$ is generic.  
\end{remark}

\begin{corollary}\label{cor:interpretG}
Let $\cG$ be a weakly minimal abelian group with finite torsion.
\begin{enumerate}[$(a)$]
\item If $A\seq G$ is not small in $\cG$ then $A_{\cG}$ interprets $\cG$.
\item Given $A\seq G$, if $\Th(A_{\cG})$ is stable with trivial forking then $A$ is small in $\cG$.
\end{enumerate}
\end{corollary}
\begin{proof}
Part $(a)$. Suppose $A\seq G$ is not small in $\cG$. By Proposition \ref{prop:wmag-small}, we may fix a finite set $F\subset G$ and some $n\geq 1$ such that $G= \Sigma^{\pm}_n(A)+F$. From this, a standard argument shows that $\cG$ is interpretable in $A_{\cG}$ (details are left to the reader). 

Part $(b)$ follows from part $(a)$ since a stable theory with trivial forking cannot interpret an infinite group. 
\end{proof}

\begin{remark}
We note that some of the previous results on small sets can be used to give more elementary proofs of special cases of the main theorems in Section \ref{sec:WMG}. 

Firstly, Proposition \ref{prop:wmg-small} yields an alternate proof that if $T$ is weakly minimal and unidimensional, $\cM\models T$, and $\cL=\cL_M$, then any $A\seq M$ is bounded in $\cM$  (a special case of Theorem \ref{thm:wmt-bounded}). The argument splits into two cases. If $A$ is small in $\cM$ then it is bounded in $\cM$ by \cite[Proposition 2.1]{CaZi} and Fact \ref{fact:wmEinfty}$(b)$. If $A$ is not small in $\cM$ then it is bounded in $\cM$ by Propositions \ref{prop:5.3} and \ref{prop:wmg-small}.

Secondly, if $A\seq\Z$ then $A$ satisfies the conclusion of Proposition \ref{prop:wmag-small} if and only if $m\Z\not\seq \Sigma^{\pm}_n(A)$ for all $m,n\geq 1$ (see \cite[Section 4]{CoSS}, where such sets are called \emph{sufficiently sparse}). It follows that if $A\seq\Z$ then either $A$ is small in $(\Z,+)$ or $N=\acl_{\cL}(B)$ for any $(\cN,B)\equiv_{\cL(P)}(\Z,+,A)$ (where $\cL$ is just the group language). So an argument similar to the one above yields an alternate proof that all subsets of $\Z$ are bounded in $(\Z,+)$, which is a special case of Theorem \ref{thm:noconstants}.
\end{remark}

\subsection{Conclusion}

We now state and prove the main result of Section \ref{sec:WMAG}.

\begin{theorem}\label{thm:MAG-main}
Let $\cG=(G,+)$ be a weakly minimal abelian group. 
Fix $A\seq G$, and suppose $A_{\cG}^{\qf}$ is mutually algebraic.  Then, for any finite $F\subset G$ and any $B\seq A+F$,  $(\cG,B)$ is superstable of $U$-rank at most $\omega$. Moreover, if $\cG$ has finite torsion then $(\cG,B)$ has nfcp; and if $\cG=(\Z,+)$ and $B$ is infinite then $(\cG,B)$ has $U$-rank $\omega$. 
\end{theorem}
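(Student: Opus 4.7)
The plan is to apply Theorem~\ref{thm:wm-exp} with $A+F$ in place of $A$, so the main task is to verify that $(A+F)_{\cG}$ is mutually algebraic. First, Proposition~\ref{prop:Gunary} combined with Theorem~\ref{thm:unary}(b) upgrades the hypothesis ``$A_{\cG}^{\qf}$ is mutually algebraic'' to ``$A_{\cG}$ is mutually algebraic,'' since $A_{\cG}$ is the expansion of $A_{\cG}^{\qf}$ by the unary predicates $A \cap nG$.

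To show $(A+F)_{\cG}$ is mutually algebraic, I would work inside $\cA^{\eq}$, where $\cA := A_{\cG}$, and invoke Corollary~\ref{cor:MAint}. Writing $F = \{f_1, \ldots, f_n\}$ (and assuming $|A| \geq n$; otherwise $A+F$ is finite), fix distinct constants $c_1, \ldots, c_n \in A$ and let $Y = A \times \{c_1, \ldots, c_n\}$, an $\cA^{\eq}$-definable set of $U$-rank $1$. Define $(a, c_i) \sim (a', c_j)$ iff $a - a' = f_j - f_i$; each defining condition is a $\cG$-definable subset of $A^2$ and hence is part of $\cA$, so $\sim$ is $\cA^{\eq}$-definable. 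Let $Z = Y/{\sim}$; then $\phi\colon Z \to A+F$, $[a, c_i] \mapsto a + f_i$, is a well-defined bijection. For any $\cG$-definable $X \seq G^k$, the pullback of $X \cap (A+F)^k$ to $Z^k$ is the image under the quotient map $Y^k \to Z^k$ of
\[
\bigcup_{(i_1, \ldots, i_k) \in [n]^k} \Bigl\{\bigl((a_1, c_{i_1}), \ldots, (a_k, c_{i_k})\bigr) \in Y^k : (a_1, \ldots, a_k) \in X - (f_{i_1}, \ldots, f_{i_k})\Bigr\},
\]
a finite union of $\cA^{\eq}$-definable sets. Hence $(A+F)_{\cG}$, transported to $Z$ via $\phi$, is a reduct of the $\cA^{\eq}$-induced structure on $Z$. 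Since $Z$ has $U$-rank $1$ in $\cA^{\eq}$ and $\cA$ is mutually algebraic, Corollary~\ref{cor:MAint} gives mutual algebraicity of the latter structure, and then Theorem~\ref{thm:unary}(a) passes this to $(A+F)_{\cG}$. Theorem~\ref{thm:wm-exp}, applied with $A+F$ in place of $A$, now yields the main clause: $(\cG, B)$ is superstable of $U$-rank at most $\omega$ for any $B \seq A+F$.

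For the torsion-free clause, observe that $A_{\cG}$ is mutually algebraic, hence stable with trivial forking, so Corollary~\ref{cor:interpretG} makes $A$ small in $\cG$, and Proposition~\ref{prop:wmag-small} then gives $A$ sufficiently sparse. Since $\Sigma_n(\pm B) \seq \Sigma_n(\pm A) + \Sigma_n(\pm F)$ and $\Sigma_n(\pm F)$ is finite, $B$ inherits sufficient sparseness from $A$: if $\Sigma_n(\pm B)$ were generic then so would $\Sigma_n(\pm A)$ be, contradicting the sparseness of $A$. Hence $B$ is small in $\cG$, and the second sentence of Theorem~\ref{thm:wm-exp} delivers nfcp for $(\cG, B)$.

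Finally, for the last clause ($\cG = (\Z, +)$, $B$ infinite), the upper bound $U((\cG, B)) \leq \omega$ is already in hand; the matching lower bound $\geq \omega$ must be obtained by constructing, for each $n$, a forking chain of length $n$ in $(\cG, B)$, exploiting both the infinitude of $B$ and the discrete weakly minimal structure of $\Z$ (e.g., by finding in a sufficiently saturated elementary extension an $\emptyset$-independent sequence of nonstandard realizations of ``$x \in B$,'' along the lines of the Palac\'in--Sklinos analysis of $(\Z,+,\{2^n\})$). The main obstacle in the whole plan is the interpretation step used to establish mutual algebraicity of $(A+F)_{\cG}$; the remaining clauses are essentially routine consequences of the machinery developed in Sections~\ref{sec:WMG}--\ref{sec:small}.
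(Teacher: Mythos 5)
Your argument for the main clause is correct and is essentially the paper's own proof: the paper also interprets $(A+F)_{\cG}$ in $A_{\cG}$ as a $U$-rank-$1$ quotient (it uses $(A\times F)/E$ with $E$ given by $a_1+f_1=a_2+f_2$, which is exactly your $\sim$) and then applies Corollary \ref{cor:MAint} followed by Theorem \ref{thm:wm-exp}. Your coding of the finite index set $F$ by distinct constants $c_1,\ldots,c_n\in A$ is, if anything, slightly more careful than the paper's formulation, and your explicit upgrade from $A^{\qf}_{\cG}$ to $A_{\cG}$ via Proposition \ref{prop:Gunary} and Theorem \ref{thm:unary}(b) is the intended (implicit) first step. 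For the nfcp clause you take a mildly different but valid route: the paper applies Corollary \ref{cor:interpretG} directly to $B$ (since $B_{\cG}$ is mutually algebraic, hence stable with trivial forking, $B$ is small), whereas you establish sufficient sparseness of $A$ and transfer it to $B\seq A+F$ through Proposition \ref{prop:wmag-small}; both work, and yours costs one extra application of the sparseness characterization.

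The one genuine gap is the final clause, that $(\Z,+,B)$ has $U$-rank exactly $\omega$ for infinite $B$. You correctly note that only the lower bound $\geq\omega$ remains, but your plan --- ``construct, for each $n$, a forking chain of length $n$ \ldots along the lines of the Palac\'in--Sklinos analysis'' --- is not a proof, and it is not clear that an $\emptyset$-independent sequence of nonstandard realizations of $x\in B$ by itself yields unbounded forking chains. The paper does not redo any such construction: it first observes that an infinite small set cannot be $\cG$-definable (Remark \ref{rem:small}, using that $B$ is small from the nfcp step), and then cites \cite[Theorem 1]{PaSk}, which says that every superstable expansion of $(\Z,+)$ of finite $U$-rank is definable in $(\Z,+)$ itself. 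Since $(\Z,+,B)$ defines the non-definable set $B$, it cannot have finite $U$-rank, and combined with the upper bound its $U$-rank is $\omega$. So the missing ingredient is precisely this black-box appeal to the Palac\'in--Sklinos theorem (together with the non-definability of $B$), rather than a bespoke forking-chain construction.
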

\begin{proof}
We may assume $A$ is infinite. By Proposition \ref{prop:wmagind} and Theorem \ref{thm:MAredexp}, $A_{\cG}$ is mutually algebraic. Fix a finite set $F\subset G$. Then $(A+ F)_{\cG}$ is interpretable in $A_{\cG}$ as a structure on $(A\times F)/E$, where $E$ is the $A_{\cG}$-definable equivalence relation on $A\times F$ given by $a_1+f_1=a_2+f_2$.   Since $F$ is finite, $(A\times F)/E$ has $U$-rank $1$ as an interpretable set in $A_{\cG}$. So $(A+ F)_{\cG}$ is mutually algebraic by Corollary \ref{cor:MAinter}. 

Now fix $B\seq A+F$. Then $\cA:=((A+F)_{\cG},B)$ is mutually algebraic by Theorem \ref{thm:MAredexp}. Therefore $B$ has $U$-rank $1$ as an $\cA$-definable set. Since $B_{\cG}$ is interpretable in $\cA$ as a structure with universe $B$, we conclude from Corollary \ref{cor:MAinter} that $\cB_{\cG}$ is mutually algebraic. By Theorem \ref{thm:wmt-main}, $(\cG,B)$ is superstable of $U$-rank at most $\omega$. 

Finally, if $G$ has finite torsion then $B$ is small in $\cG$ by Corollary \ref{cor:interpretG}$(b)$ and Theorem \ref{thm:MAfacts}. So $(\cG,B)$ has nfcp by Fact \ref{fact:wmEinfty}$(b)$ and Proposition \ref{prop:CZsmall}. Note also that if $B$ is infinite then it is not $\cG$-definable by Remark \ref{rem:small}. So if $\cG=(\Z,+)$ and $B$ is infinite then $(\cG,B)$ does not have finite $U$-rank by \cite[Theorem 1]{PaSk}.
\end{proof}

\begin{remark}\label{rem:subgroup}
Suppose $\cK=(K,+)$ is an abelian group and $A\seq K$ is such that $A^{\qf}_{\cK}$ is mutually algebraic. Let $\cG=(G,+)$ be a subgroup of $\cK$, such that $A\seq G$. Then $A^{\qf}_{\cG}$ is a reduct of $A^{\qf}_{\cK}$ and so, if $\cG$ is weakly minimal, then the conclusion of Theorem \ref{thm:MAG-main} holds.
\end{remark}

In all examples of stable structures of the form $(\Z,+,A)$, considered in \cite{CoSS}, \cite{CoGG}, \cite{LaPo}, and \cite{PaSk}, the induced structure $A_{(\Z,+)}$ is mutually algebraic, and so these results fall under the umbrella of Theorem \ref{thm:MAG-main}. However, in many of these results, a considerable amount of work is still required to demonstrate mutual algebraicity for $A_{(\Z,+)}$. Indeed, this is usually done by showing that $A_{(\Z,+)}$ is interpretable in a more familiar structure (e.g., $\N$ with the successor function). As we will see later, there are some cases where it is significantly easier to just show $A_{(\Z,+)}$ is mutually algebraic. Theorem \ref{thm:FGM} is a notable example. Moreover, as shown in the proof of Theorem \ref{thm:MAG-main}, once one knows that $A_{(\Z,+)}$ is mutually algebraic, it quickly follows that $B_{(\Z,+)}$ is mutually algebraic for any $B\seq A+F$ where $F\subset\Z$ is finite. This also eliminates a nontrivial amount of technical and tedious work in some  examples considered in the sources above (e.g., \cite[Lemma 4.17]{CoGG}).

\section{Stable expansions of weakly minimal abelian groups}\label{sec:Zexp}

In this section, we give several new families of stable expansions of weakly minimal abelian groups. The main results are Theorems \ref{thm:Kepler}, \ref{thm:FGM}, and \ref{thm:LRR}. Each one of these theorems is formulated for a weakly minimal abelian group $\cG=(G,+)$ satisfying certain further properties, which always hold for $(\Z,+)$. The conclusion of each of these theorems is that some expansion of the form $(\cG,B)$ is superstable of $U$-rank  \emph{at most} $\omega$. For each result, we obtain this by showing that the induced structure $B_{\cG}$ is mutually algebraic. Therefore, if $\cG=(\Z,+)$ and $B$ is infinite, then $(\cG,B)$ has $U$-rank \emph{exactly} $\omega$ by Theorem \ref{thm:MAG-main}. 

Given an integer $n\geq 1$, we let $[n]=\{1,\ldots,n\}$.

\subsection{Strongly lacunary sets in $\C$}\label{sec:Kepler}

An increasing sequence $(a_n)_{n=0}^\infty$ in $\R$ is often called \emph{lacunary} if $\liminf_{n\to\infty}\frac{a_{n+1}}{a_n}>1$. This motivates the following definition.

\begin{definition}
Suppose $A\seq\C$ is countable. Then $A$ is \textbf{strongly lacunary} if there is an enumeration $\{a_n\}_{n=0}^\infty$ of $A\backslash\{0\}$ such that either $\lim_{n\to\infty}|\frac{a_{n+1}}{a_n}|=\infty$ or $\lim_{n\to\infty}\frac{a_{n+1}}{a_n}=\kappa$ for some $\kappa\in\C$ such that $|\kappa|>1$. 
\end{definition}

Suppose $A\seq \C$ is strongly lacunary, witnessed by an enumeration $\{a_n\}_{n=0}^\infty$. Then there is some $N\geq 0$ such that $|a_{n+1}|>|a_n|$ for all $n\geq N$. It follows from this that if $\{c_n\}_{n=0}^\infty$ is another enumeration witnessing that $A$ is strongly lacunary, then either $\lim_{n\to\infty}|\frac{a_{n+1}}{a_n}|=\infty=\lim_{n\to\infty}|\frac{c_{n+1}}{c_n}|$, or $\lim_{n\to\infty}\frac{a_{n+1}}{a_n}=\kappa=\lim_{n\to\infty}\frac{c_{n+1}}{c_n}$ with $|\kappa|>1$. In the former case we call $A$ \textbf{divergent}, and in the latter case we call $A$ \textbf{convergent} and call $\kappa$ the \textbf{Kepler limit of $A$} (this terminology is often used in the context of Fibonacci sequence, whose Kepler limit is the golden ratio).

In \cite[Theorem 7.16$(a)$]{CoSS} the first author showed that \emph{any} divergent strongly lacunary set $A\seq\Z^+$ yields a stable expansion $(\Z,+,A)$ (this was shown independently by Lambotte and Point \cite{LaPo} under the extra assumption that the set is eventually periodic modulo any $n\geq 1$). We will reprove this below in a more general setting. On the other hand, there are strongly lacunary sets $A\seq\Z$ such that $(\Z,+,A)$ is unstable (the existence of such sets was questioned in  \cite{CoSS} and \cite{LaPo}). For example, given  $q\geq 2$, if $A_q=\{q^n+n:n\in\N\}$, then $(\Z,+,A_q)$ is interdefinable with $(\Z,+,<,x\mapsto q^x)$ (see \cite[Theorem 4.8]{CoGG}). The proof generalizes to $\{F_n+n:n\in\N\}$, where $F_n$ is the $n^{\text{th}}$ Fibonacci number, and so we also have a  strongly lacunary set $A\seq\Z^+$, with an irrational Kepler limit, such that $(\Z,+,A)$ is unstable. In this section, we show that this cannot happen for a strongly lacunary set with a transcendental Kepler limit. This generalizes a similar result of Lambotte and Point \cite{LaPo} for certain expansions of $(\Z,+)$.

\begin{lemma}\label{lem:trans-induced}
Suppose $A\seq \C$ is strongly lacunary, and either divergent or convergent with transcendental Kepler limit. Then $A_{(\C,+)}^{\qf}$ is interdefinable with $A$ in the language of equality.
\end{lemma}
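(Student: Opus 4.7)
The plan is to verify the nontrivial inclusion: every relation of the form $A^n\cap X$, with $X\seq\C^n$ a quantifier-free $(\C,+)$-definable set (possibly with parameters from $\C$), is definable in $(A,=)$ over parameters from $A$ alone. The reverse inclusion is automatic since $x=y$ is quantifier-free in the group language and each singleton $\{a\}$ for $a\in A$ is the solution set of $x-a=0$. Because every quantifier-free $(\C,+)$-formula with parameters in $\C$ is a Boolean combination of $\Z$-linear equations $E: c_1x_1+\ldots+c_kx_k=r$ with $c_i\in\Z$ and $r\in\C$, it suffices to show that the solution set $A^k\cap E$ of any fixed such $E$ is equality-definable over parameters from $A$.

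For the partition reduction, given a solution $(b_1,\ldots,b_k)\in A^k$ of $E$, let $\pi$ be the partition of $[k]$ recording which coordinates coincide. Identifying $x_i$ with $y_{P(i)}$, where $P(i)$ is the $\pi$-class of $i$, the equation becomes $\sum_P c'_P y_P = r$ with $c'_P=\sum_{i\in P}c_i$, to be solved with pairwise distinct $y_P\in A$. Let $I_0=\{P:c'_P\neq 0\}$; on its complement the $y_P$ are unconstrained except for distinctness, which is an equality condition. Thus $A^k\cap E$ stratifies, as $\pi$ ranges over the finitely many partitions of $[k]$, into the pattern $\pi$ together with a choice of $(y_P)_{P\in I_0}$ solving $\sum_{P\in I_0}c'_P y_P=r$ with distinct elements of $A$. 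It therefore suffices to show that \emph{every $\Z$-linear equation $c'_1y_1+\ldots+c'_\ell y_\ell=r$ with $\ell\geq 1$ and all $c'_i\neq 0$ has only finitely many $\ell$-tuples of pairwise distinct elements of $A$ as solutions}: these finitely many ``seeds,'' ranging over $\pi$, combine into a finite Boolean combination of equalities $x_i=x_j$ and $x_i=a$ for specific $a\in A$.

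To prove this finiteness claim I argue by contradiction. Fix the enumeration $A\setminus\{0\}=\{a_n\}_{n=0}^\infty$ witnessing strong lacunarity, write each solution as $(a_{n_1},\ldots,a_{n_\ell})$ with distinct indices, and by passing to a subsequence and relabeling assume $n_1>n_2>\cdots>n_\ell$ with $n_1\to\infty$. Divide by $a_{n_1}$ to get
\[
c'_1 + \sum_{i=2}^{\ell} c'_i\,\frac{a_{n_i}}{a_{n_1}} = \frac{r}{a_{n_1}}.
\]
In either hypothesis $|a_{n_1}|\to\infty$, so the right side tends to $0$. In the divergent case each ratio $a_{n_i}/a_{n_1}$ with $i\geq 2$ also tends to $0$, since $|a_{n+1}|/|a_n|$ eventually exceeds any prescribed constant; this forces $c'_1=0$, a contradiction. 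In the convergent transcendental case, pass to a further subsequence so that for each $i\geq 2$ the positive integer $d_i:=n_1-n_i$ either stabilizes at some value or tends to $\infty$. Setting $d_1=0$ and letting $J_0$ consist of $1$ together with the stabilizing indices, a telescoping product of the ratios $a_j/a_{j+1}$ shows $a_{n_i}/a_{n_1}\to\kappa^{-d_i}$ for $i\in J_0$ and $\to 0$ for $i\notin J_0$. The limit of the equation becomes $\sum_{i\in J_0}c'_i\,\kappa^{-d_i}=0$ with the $d_i$ distinct nonnegative integers and all $c'_i\neq 0$: this is a nonzero $\Z$-polynomial evaluated at the transcendental $\kappa^{-1}$, a contradiction.

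The main obstacle is the convergent case: the combinatorial bookkeeping (passing to a subsequence along which each difference $n_1-n_i$ either stabilizes or diverges) must mesh with the telescoping estimate for $a_{n_i}/a_{n_1}$ to yield an honest, nonzero polynomial relation in $\kappa^{-1}$. The distinctness of the $n_i$ (inherited from the partition reduction) secures distinct exponents in $J_0$, and together with the nonvanishing $c'_i$ this prevents the limit polynomial from being identically zero, allowing transcendence of $\kappa$ to close the argument.
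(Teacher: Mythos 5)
Your proof is correct and follows essentially the same route as the paper: reduce to showing that any $\Z$-linear equation $\sum c_i x_i = r$ with all $c_i \neq 0$ has only finitely many solutions in pairwise-distinct tuples from $A$, then derive a contradiction by passing to a subsequence in which the gaps from the maximal index either stabilize or diverge, dividing by the dominant term, and using transcendence of the Kepler limit (or divergence) to rule out the resulting limit relation. The paper packages this by first transporting the induced structure to $\N$ via the enumeration $a_n \mapsto n$ and phrasing the partition reduction as passing to the reduct $\cN_0$ with distinctness and nonzero coefficients built in; your formulation works directly on $A$, but the substance is the same.
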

\begin{proof}
The proof uses techniques similar to those of Palac\'{i}n and Sklinos \cite{PaSk} and Lambotte and Point \cite{LaPo}.  Let $A=\{a_n\}_{n=0}^\infty$ be an enumeration of $A$ witnessing that $A$ is strongly lacunary. We may assume $|a_{n+1}|>|a_n|$ for all $n\in\N$. 

Let $\cN$ be the structure on $\N$ induced from $A_{(\C,+)}^{\qf}$ via the map $a_n\mapsto n$. It suffices to show $\cN$ is interdefinable with the structure $\N$ in the language of equality, which we denote by $\dot{\N}$. Given $k\geq 1$, $\dbar\in \Z^k$, and $r\in \C$, define
\[
X_{\dbar;r}=\left\{\nbar\in\N^k:n_i\neq n_j\text{ for all distinct $i,j\in[k]$ and $\textstyle\sum_{i=1}^k d_ia_{n_i}=r$}\right\}.
\]
Note that any $X_{\dbar;r}$ is $\emptyset$-definable in $\cN$. Let $\cN_0$ be the reduct of $\cN$ to symbols for  $X_{\dbar;r}$, where $\dbar\in (\Z^*)^k$ and $r\in \C$. It is easy to see that $\cN$ is interdefinable with $\cN_0$, and so it suffices to show that $\cN_0$ is interdefinable with $\dot{\N}$. Fix $k\geq 1$, $\dbar\in (\Z^*)^k$, and $r\in \C$. Toward a contradiction, suppose $X_{\dbar;r}$ is infinite. 

By pigeonhole, there are infinitely many tuples in $X_{\dbar;r}$ of the same order type. After permuting the coordinates, we may fix an infinite sequence $(\nbar(t))_{t=0}^\infty$ from $X_{\dbar;r}$, such that $n(t)_1<\ldots<n(t)_k$ for all $t\in\N$. Since $(\nbar(t))_{t=0}^\infty$ is infinite, we may pass to a subsequence and assume that $(n(t)_k)_{t=0}^\infty$ diverges. For $t\in\N$ and $i\in[k]$, let $u(t)_i=n(t)_k-n(t)_i$. Then $u(t)_1>\ldots>u(t)_k$ for all $t\in\N$. Let $u_k=0$, and note that $u(t)_k=u_k$ for all $t\in\N$. If the sequence $(u(t)_{k-1})_{t=0}^\infty$ does not diverge then, by pigeonhole, it contains a constant subsequence. So, after passing to a subsequence, we may assume that either $(u(t)_{k-1})_{t=0}^\infty$ diverges, or $u(t)_{k-1}=u_{k-1}$ for all $t\in\N$ and some $u_{k-1}\in\N$. Repeating this process, we may assume that for some $\ell\in[k]$ and $u_k,u_{k-1},\ldots,u_\ell\in\N$, we have $u(t)_i=u_i$ for all $t\in\N$ and $\ell\leq i\leq k$, and $\lim_{t\to\infty}u(t)_i=\infty$ for all $1\leq i<\ell$ (note that $\ell=1$ is possible, making the second condition vacuous). 

For any $1\leq i<\ell$, since $(u(t)_i)_{t=0}^\infty$ diverges, we have that, for any $u\in\N$,
\[
0\leq \lim_{t\to\infty}\frac{|a_{n(t)_i}|}{|a_{n(t)_k}|}=\lim_{t\to\infty}\frac{|a_{n(t)_k-u(t)_i}|}{|a_{n(t)_k}|}\leq \lim_{t\to\infty}\frac{|a_{n(t)_k-u}|}{|a_{n(t)_k}|}.
\]
So we have $\lim_{t\to\infty}\frac{a_{n(t)_i}}{a_{n(t)_k}}=0$ for all $1\leq i<\ell$ (if $A$ is divergent this is clear, and if $A$ is convergent then this follows from $|\tau|>1$). Recall that $\nbar(t)\in X_{\dbar;r}$ for all $t\in\N$, and that $(n(t)_k)_{k=0}^\infty$ diverges. Altogether,
\begin{equation*}
0=\lim_{t\to\infty}\frac{r}{a_{n(t)_k}}=\lim_{t\to\infty}\sum_{i=1}^k d_i\frac{a_{n(t)_i}}{a_{n(t)_k}}=\lim_{t\to\infty}\sum_{i=\ell}^k d_i\frac{a_{n(t)_k-u_i}}{a_{n(t)_k}}.\tag{$\dagger$}
\end{equation*}
Recall that $u_\ell>\ldots>u_k=0$. Therefore,  if $A$ is divergent then the rightmost limit in $(\dagger)$ is $d_k$, and if $A$ is convergent then the rightmost limit in $(\dagger)$ is $\sum_{i=\ell}^k d_i\tau^{\nv u_i}$. In either case, this contradicts $d_i\neq 0$ for all $\ell\leq i\leq k$. 
\end{proof}

\begin{theorem}\label{thm:Kepler}
Suppose $\cG=(G,+)$ is a weakly minimal subgroup of $(\C,+)$, and $A\seq G$ is strongly lacunary and either divergent or convergent with transcendental Kepler limit. Then, for any finite $F\seq G$ and infinite $B\seq A+F$, $(\cG,B)$ has nfcp and is superstable of $U$-rank at most $\omega$.
\end{theorem}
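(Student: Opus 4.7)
The plan is to reduce the theorem to Theorem~\ref{thm:MAG-main} via Remark~\ref{rem:subgroup}, so that the only real task is to verify that $A_{(\C,+)}^{\qf}$ is mutually algebraic. Indeed, once this is established, then (since $\cG\leq(\C,+)$ and $A\seq G$) the structure $A_{\cG}^{\qf}$ is a reduct of $A_{(\C,+)}^{\qf}$, hence mutually algebraic by Theorem~\ref{thm:unary}$(a)$, and Remark~\ref{rem:subgroup} applies.

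To see that $A_{(\C,+)}^{\qf}$ is mutually algebraic, I would appeal directly to Lemma~\ref{lem:trans-induced}: that lemma says $A_{(\C,+)}^{\qf}$ is interdefinable with the pure equality structure on $A$. A pure infinite set in the language of equality is mutually algebraic, either by direct inspection (every formula is a Boolean combination of atomic formulas $x_i=x_j$ and $x_i=c$, each of which defines a set with all fibers of size at most $1$), or by invoking Theorem~\ref{thm:MAchar}$(iii)$ since the pure equality theory is visibly weakly minimal with trivial forking.

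Given mutual algebraicity of $A_{\cG}^{\qf}$, Theorem~\ref{thm:MAG-main} immediately yields that $(\cG,B)$ is superstable of $U$-rank at most $\omega$ for every finite $F\seq G$ and every $B\seq A+F$. Moreover, since $(\C,+)$ is torsion-free, the subgroup $\cG$ is torsion-free as well, so the ``moreover'' clause of Theorem~\ref{thm:MAG-main} provides nfcp. This is the entire proof.

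The main obstacle, of course, is not in this final packaging step but already behind us in Lemma~\ref{lem:trans-induced}: the delicate asymptotic argument there is what actually uses the hypothesis on the Kepler limit (transcendence of $\tau$ is essential to exclude nontrivial relations $\sum_{i=\ell}^{k} d_i \tau^{-u_i}=0$). Once that lemma is accepted, Theorem~\ref{thm:Kepler} is a formal consequence of the framework developed in Sections~\ref{sec:WMAG} and \ref{sec:small}.
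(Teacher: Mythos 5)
Your proof is correct and is essentially the paper's proof, which is a one-line appeal to Lemma \ref{lem:trans-induced} plus Theorem \ref{thm:MAG-main} via Remark \ref{rem:subgroup}; you have simply filled in the routine details (that an infinite pure set is mutually algebraic, and that a subgroup of $(\C,+)$ is torsion-free) that the paper leaves implicit.
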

\begin{proof}
Apply Lemma \ref{lem:trans-induced} and Theorem \ref{thm:MAG-main} (via Remark \ref{rem:subgroup}).
\end{proof}

\subsection{Finite rank multiplicative groups}\label{sec:FGM}

Throughout this section, we fix an algebraically closed field $\K$ of characteristic $0$ and a subgroup $\cG=(G,+)$ of the additive group $(\K,+)$. Let $\K^*$ denote the multiplicative subgroup of nonzero elements of $\K$.
Recall that the \emph{rank} of an abelian group is the cardinality of a maximal $\Z$-linearly independent set. We will give a short proof of the following theorem.

\begin{theorem}\label{thm:FGM}
Suppose $\cG$ is weakly minimal, and $A\seq G$ is contained in a finite rank subgroup of $\K^*$. Then, for any finite $F\subset G$ and any $B\seq A+F$, $(\cG,B)$ has nfcp and is superstable of $U$-rank at most $\omega$. 
\end{theorem}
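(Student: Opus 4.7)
The plan is to invoke Remark \ref{rem:subgroup} together with Theorem \ref{thm:MAG-main}, reducing the claim to showing that the quantifier-free induced structure $A^{\qf}_{(\K,+)}$ is mutually algebraic. By Theorem \ref{thm:MAchar}$(ii)$, it then suffices to prove that for each $k\geq 1$, each tuple $\cbar=(c_1,\ldots,c_k)$ of nonzero integers, and each $g\in\K$, the subset
\[
X_{\cbar,g}=\{(a_1,\ldots,a_k)\in A^k:c_1a_1+\cdots+c_ka_k=g\}
\]
of $A^k$ is defined by a Boolean combination of mutually algebraic $\cL_\K$-formulas, possibly in fewer than $k$ variables.

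Fix a finite rank multiplicative subgroup $\Gamma\leq\K^*$ with $A\seq\Gamma$, and call $\abar\in X_{\cbar,g}$ \emph{non-degenerate} if $\sum_{i\in I}c_ia_i\neq 0$ for every nonempty proper $I\subsetneq[k]$; write $X^{\mathrm{nd}}_{\cbar,g}$ for the set of non-degenerate solutions. Applied to $\sum c_ix_i=g$ with variables in the finite-rank subgroup of $\K^*$ generated by $\Gamma$, the $c_i$, and (if nonzero) $g$, the Evertse--Schlickewei--Schmidt theorem yields two cases. If $g\neq 0$, then $X^{\mathrm{nd}}_{\cbar,g}$ is finite, hence trivially mutually algebraic. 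If $g=0$, the non-degenerate solutions form finitely many orbits under diagonal scaling by $\Gamma$, so $X^{\mathrm{nd}}_{\cbar,0}\seq\bigcup_{j=1}^{N}\{(\alpha^{(j)}_1\lambda,\ldots,\alpha^{(j)}_k\lambda):\lambda\in\Gamma\}$ for some $N\geq 1$ and representative tuples $\alpha^{(j)}\in(\K^*)^k$ (coordinates nonzero by non-degeneracy of singleton sub-sums). Fixing any coordinate $a_i=b$ then pins down $\lambda=b/\alpha^{(j)}_i$ uniquely per orbit, determining all remaining coordinates; hence each single-coordinate fiber of $X^{\mathrm{nd}}_{\cbar,0}$ has size at most $N$, so $X^{\mathrm{nd}}_{\cbar,0}$ is mutually algebraic as a subset of $A^k$.

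For the degenerate solutions, write
\[
X_{\cbar,g}=X^{\mathrm{nd}}_{\cbar,g}\cup\bigcup_{\emptyset\neq I\subsetneq[k]}\bigl(Y_I^{0}\cap Y_{[k]\setminus I}^{g}\bigr),
\]
where $Y_J^{g'}=\{\abar\in A^k:\sum_{i\in J}c_ia_i=g'\}$ depends only on the coordinates indexed by $J$. Now induct on $k$: the base case $k=1$ is trivial (at most one point), and for $k>1$, each $Y_J^{g'}$ with $|J|<k$ is by induction a Boolean combination of mutually algebraic $\cL_\K$-formulas in its sub-tuple $\abar|_J$, so the conjunctions $Y^0_I\cap Y^g_{[k]\setminus I}$ (on the disjoint supports $I$ and $[k]\setminus I$) are Boolean combinations of such formulas as well. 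Together with $X^{\mathrm{nd}}_{\cbar,g}$ this exhibits $X_{\cbar,g}$ in the required form, and mutual algebraicity of $A^{\qf}_{(\K,+)}$ follows.

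The main technical delicacy is the non-degenerate case when $g=0$, where ESS delivers only finitely many $\Gamma$-orbits, not finitely many solutions; mutual algebraicity is not immediate but must be extracted from the observation that each orbit is a one-parameter family in which any single coordinate determines the scaling parameter. This orbit-to-fiber correspondence is presumably the ``pleasing parallel'' with the fiber-bound definition of mutual algebraicity alluded to in the introduction to Section \ref{sec:FGM}.
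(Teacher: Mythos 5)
Your proposal is correct and follows essentially the same route as the paper: reduce via Theorem \ref{thm:MAG-main} (and Remark \ref{rem:subgroup}) to mutual algebraicity of the quantifier-free induced structure, split each linear relation into its non-degenerate solution sets, and apply Theorem \ref{thm:ESS} --- finiteness when $g\neq 0$, and uniformly bounded fibers when $g=0$. Your ``finitely many $\Gamma$-orbits, one parameter pinned down per coordinate'' argument is just a repackaging of the paper's computation that $b\inv X\seq\Gamma_0(c_1,\ldots,c_{i-1},c_{i+1},\ldots,c_k;\nv c_i)$, and your explicit induction on $k$ for the degenerate solutions spells out the Boolean-combination decomposition the paper asserts in one line.
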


For the case $\cG=(\Z,+)$, this was proved by the first author in \cite[Theorem 3.1]{CoGG} (although explicitly only for $A\seq\Z^+$ and $F=\{0\}$). The proof relies on results concerning the structure of solutions to linear equations from finite rank multiplicative groups. This goes back to work of Mann, and is connected to number-theoretic results around Lang's Conjecture (proved by Faltings and Vojta) and Schmidt's Subspace Theorem. See \cite{PiLC} for a model-theoretic account of this relationship.

 In \cite{BeZi}, Belegradek and Zilber use these type of results to prove stability for the expansion of the \emph{field} $(\C,+,\cdot)$ by a finite rank multiplicative subgroup of the unit circle. Similar results for arbitrary finite rank subgroups of $\C^*$ were proved by Van den Dries and G\"{u}nayd{\i}n \cite{vdDGu}. Note however that the full conclusion of Theorem \ref{thm:FGM} does not hold for expansions of fields. For instance, if $\Gamma=\{2^n:n\in\Z\}$ and $\Pi=\{2^n:n\in\N\}$, then  $(\C,+,\cdot,\Gamma)$ is stable while $(\C,+,\cdot,\Pi)$ defines the ordering on $\Pi$. Note also that $\Gamma_{(\C,+,\cdot)}$ is interdefinable with $(\Z,+)$, and thus is weakly minimal but does not have trivial forking.

The work in \cite{CoGG} uses the following result, which is \cite[Theorem 1.1]{ESS}.

\begin{theorem}[Evertse, Schlickewei, Schmidt]\label{thm:ESS}
Suppose $\Gamma$ is a  subgroup of $(\K^*)^k$ of  rank at most $\rho$, for some $k,\rho\in\N$. Then there is an integer $N=N(k,\rho)$ such that, for any $c_1,\ldots,c_k\in \K$ and any $r\in \K^*$, there are at most $N$ tuples $(x_1,\ldots,x_k)\in\Gamma$ such that $c_1x_1+\ldots+c_kx_k=r$ and $\sum_{i\in I}c_ix_i\neq 0$ for all nonempty $I\seq [k]$.
\end{theorem}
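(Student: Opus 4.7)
The plan is to reduce the statement to the $S$-unit equation theorem over a number field and then invoke a quantitative form of Schmidt's subspace theorem. First, dividing through by $r$, we may assume $r=1$. The non-degeneracy hypothesis, applied to the singletons $I=\{i\}$, forces $c_i\neq 0$ for every $i\in[k]$, so the substitution $y_i=c_ix_i$ gives a bijection between the solution set and the set of tuples $(y_1,\ldots,y_k)$ lying in a fixed coset of $\Gamma$, satisfying $y_1+\cdots+y_k=1$ and $\sum_{i\in I}y_i\neq 0$ for every nonempty $I\seq[k]$. Absorbing the coset representative into the group enlarges the rank by at most $k$, so we may assume $(y_1,\ldots,y_k)\in\Gamma'$ for some subgroup $\Gamma'\leq(\K^*)^k$ of rank at most $\rho+k$.

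Next I would descend from the arbitrary algebraically closed field $\K$ of characteristic zero to a number field $F$. Pick a $\Z$-basis of $\Gamma'$ modulo torsion; its coordinates lie in a finitely generated extension $L/\Q$. A standard specialization argument---choose a transcendence basis of $L$ and specialize the transcendental coordinates to generic algebraic values, preserving rank and the non-degeneracy constraints---reduces the bound to the case $\Gamma'\leq(F^*)^k$ for some number field $F$. Then choose a finite set $S$ of places of $F$, containing all archimedean ones, at which every coordinate of every generator of $\Gamma'$ is an $S$-unit; then $|S|$ is bounded in terms of $k$ and $\rho$, and every coordinate of every solution lies in $\mathcal{O}_{F,S}^*$. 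At this point the problem becomes a version of the $S$-unit equation theorem: bound the number of non-degenerate solutions in $(\mathcal{O}_{F,S}^*)^k$ to $y_1+\cdots+y_k=1$ by a constant depending only on $k$ and $|S|$.

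For this final step I would partition solutions according to the ordering, at each place $v\in S$, of the $v$-adic absolute values $|y_i|_v$. Within each such class, the quantitative subspace theorem of Schlickewei---the $p$-adic refinement of Schmidt's subspace theorem---forces all but an explicitly bounded number of solutions to lie on a proper linear subspace of $F^k$. One then iterates on the subvarieties carved out by vanishing subsums on these exceptional subspaces, the non-degeneracy hypothesis ensuring that every surviving solution reappears in only boundedly many layers and that the recursion terminates in depth at most $k$.

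The main obstacle, and indeed the full content of \cite[Theorem~1.1]{ESS}, is this last step: obtaining a bound on the number of non-degenerate solutions that is \emph{uniform} in $F$ and in the choice of coefficients, depending only on $k$ and $|S|$ (equivalently, on $k$ and $\rho$). The uniformity is the deep Diophantine input provided by the quantitative subspace theorem, whose proof rests on geometry of numbers and a delicate analysis of heights; we take it as a black box from \cite{ESS}, and the work above amounts to the elementary reduction that bridges the statement over arbitrary $\K$ to the number-theoretic setting where the subspace theorem applies.
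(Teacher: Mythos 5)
The paper offers no proof of this statement: it is imported verbatim (up to dividing the equation by $r$) as Theorem~1.1 of \cite{ESS}, so there is no internal argument of the paper to compare yours against. Your opening reductions are harmless but largely unnecessary --- the cited theorem is already stated for an arbitrary equation $a_1x_1+\cdots+a_kx_k=1$ with an explicit bound depending only on $k$ and the rank, so only the normalization by $r$ is needed, and the coefficients need not be absorbed into the group at all.

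Where your sketch of the internal proof of \cite[Theorem~1.1]{ESS} goes wrong is the claim that one can choose $S$ with $|S|$ bounded in terms of $k$ and $\rho$, together with the parenthetical assertion that a bound depending on $k$ and $|S|$ is ``equivalently'' a bound depending on $k$ and $\rho$. Neither is true: a rank-one subgroup of $F^*$ generated by a product of $m$ distinct primes already forces $|S|\geq m+1$, so $|S|$ is not controlled by the rank. This is not a pedantic point --- uniformity in the rank alone, independent of $|S|$ and of $[F:\Q]$, is precisely the advance of \cite{ESS} over the earlier quantitative $S$-unit theorems, and it is obtained not merely from the classical $p$-adic subspace theorem plus a partition by orderings of the $|y_i|_v$, but from an ``absolute'' quantitative subspace theorem over $\overline{\Q}$ together with a separate count of solutions of small height inside the finite-rank group. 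Since the paper (and, in the end, your proposal) treats the whole statement as a black box, none of this affects the legitimacy of invoking the theorem; but your reconstruction of its proof, taken literally, would only deliver a bound depending on $|S|$, which is strictly weaker than the statement being justified.
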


We will use this result to directly show that, for $A\seq G$ as in Theorem \ref{thm:FGM}, $A_{\cG}^{\qf}$ is mutually algebraic.

\begin{proof}[Proof of Theorem \ref{thm:FGM}]
Let $A\seq G$ be as in the theorem. We assume $A$ is infinite. By Theorem \ref{thm:MAG-main}, it suffices to show $A_{\cG}^{\qf}$ is mutually algebraic. Given $k\geq 1$, $\cbar\in\{1,\nv 1\}^k$, and $r\in G$, define $A(\cbar;r):=\{\abar\in A^k:c_1a_1+\ldots+c_ka_k=r\}$ and define $A_0(\cbar;r)$ to be the set of $\abar\in A(\cbar;r)$ such that $\sum_{i\in I}c_ia_i\neq 0$ for all nonempty $I\subsetneq [k]$. Note that any $A(\cbar;r)$ is a finite Boolean combination of sets of the form $A_0(\cbar';r')$ for some $k'$-tuple $\cbar'$ and $r'\in G$. So it suffices to show that, for any $k\geq 1$, $\cbar\in\{1,\nv 1\}^k$, and $r\in G$, $A_0(\cbar;r)$ is  mutually algebraic as a $k$-ary relation on $A$. 

Fix $k\geq 1$, $\cbar\in\{1,\nv 1\}^k$, and $r\in G$. Suppose $A\seq\Gamma$, where $\Gamma$ is a subgroup of $\K^*$ of rank $\rho\in\N$.
Note that $\Gamma^k$ is a subgroup of $(\K^*)^k$ of rank $k\rho$.  Let $\Gamma_0(\cbar;r)$ be the set of $\xbar\in\Gamma^k$ such that $c_1x_1+\ldots+c_kx_k=r$ and $\sum_{i\in I}c_ix_i\neq 0$ for all nonempty $I\subsetneq [k]$.  We have $A_0(\cbar;r)\seq \Gamma_0(\cbar;r)$, and so if $r\in\K^*$ then $A_0(\cbar;r)$ is finite by Theorem \ref{thm:ESS}. So we may assume $r=0$. Given $i\in [k]$, set $\Gamma_{0,i}=\Gamma_0(c_1,\ldots,c_{i-1},c_{i+1},\ldots,c_k;\nv c_i)$. By Theorem \ref{thm:ESS}, there is some $N\geq 0$ such that $|\Gamma_{0,i}|\leq N$ for all $i\in[n]$. Fix $i\in[n]$ and $b\in A$ and set
\[
X=\left\{(a_1,\ldots,a_{k-1})\in A^{k-1}:(a_1,\ldots,a_{i-1},b,a_i,\ldots,a_{k-1})\in A_0(\cbar;0)\right\}.
\]  
Then $b\inv X\seq \Gamma_{0,i}$, and so $|X|\leq N$, as desired. 
\end{proof}

\subsection{Linear recurrence relations}\label{sec:LRR}

In this section, we consider sets of algebraic numbers, which are enumerated by linear homogeneous recurrence relations with constant coefficients. Once again, our main task will be to show mutual algebraicity of $A_{\cG}$, where $A$ and $\cG$ satisfy certain assumptions. Toward this end, we first generalize the behavior found in Theorem \ref{thm:ESS} in order to formulate a combinatorial criterion for generating mutually algebraic structures. In contrast to Section \ref{sec:FGM} however, we will need to use  the characterization of mutual algebraicity involving uniformly bounded arrays.

Fix an infinite set $A$, an abelian group $\cG=(G,+)$, and a set $\Phi$ of functions from $A$ to $G$. (We do not assume $A\seq G$.)

\begin{definition}\label{def:ESS}$~$
\begin{enumerate}
\item Given $k\geq 1$, $\bar{\varphi}\in \Phi^k$, and $r\in G$, define
\begin{align*}
A(\bar{\varphi};r) &:= \left\{\abar\in A^k:\varphi_1(a_1)+\ldots+\varphi_k(a_k)=r\right\}, \text{ and }\\
A_0(\bar{\varphi};r) &: =\left\{\abar\in A(\bar{\varphi};r):\sum_{i\in I}\varphi_i(a_i)\neq 0\text{ for all }\emptyset\neq I\subsetneq [k]\right\}.
\end{align*}
\item Let $A_{\cG}^{\Phi}$ be  the relational structure with universe $A$  and a $k$-ary relation $R_{\bar{\varphi};r}$ interpreted as $A(\bar{\varphi};r)$, for all $k\geq 1$, $\bar{\varphi}\in\Phi^k$, and $r\in G$. 
\end{enumerate} 
\end{definition}

The following is the main technical lemma.

\begin{lemma}\label{lem:ESSMA}
Suppose that, for any $k\geq 1$, there is $n_k\in\N$  such that $|A_0(\bar{\varphi};r)|\leq n_k$ for any $\bar{\varphi}\in \Phi^k$ and $r\neq 0$. Then $A_{\cG}^{\Phi}$ is mutually algebraic.
\end{lemma}
\begin{proof}
Given $k\geq 1$, $\bar{\varphi}\in\Phi^k$, $r\in G$, $\xbar\seq\zbar=(z_1,\ldots,z_k)$, and finite $B\seq A$, set
\[
S^{\bar{\varphi};r}_{\xbar}(B)= S^{R_{\bar{\varphi};r}}_{\xbar}(B)
\]
(working in $A^\Phi_{\cG}$). We show, by induction on $k\geq 1$, that there are $m_k,N_k\in\N$ such that, for any $\bar{\varphi}\in\Phi^k$, $r\in G$, finite $B\seq A$, and any nonempty  $\xbar\seq \zbar=(z_1,\ldots,z_k)$, there are at most $N_k$ types in $S^{\bar{\varphi};r}_{\xbar}(B)$ supporting an $m_k$-array. By Theorem \ref{thm:MAfacts}, this suffices to show that $\cA^{\Phi}_{\cG}$ is mutually algebraic.

For the base case $k=1$, note that any unary relation $R(z)$ has uniformly bounded arrays. Indeed, given finite $B\seq A$, there are at most two types in $S^R_z(B)$ which contain $z\neq b$ for all $b\in B$. So fix $k>1$ and suppose we have defined $m_{k-1}$ and $N_{k-1}$ satisfying the desired properties. Let $\zbar=(z_1,\ldots,z_k)$. Given $\bar{\varphi}\in\Phi^k$, $r\in G$,  finite $B\seq A$, $\xbar\seq\zbar$, and an equivalence relation $E$ on $\xbar$, let $S_{\xbar,E}^{\bar{\varphi};r}(B)$ be the set of  $p\in S_{\xbar}^{\bar{\varphi};r}(B)$ such that:
\begin{enumerate}[$(i)$]
\item $z_i\neq b\in p$ for all $z_i\in\xbar$ and $b\in B$, and
\item given $z_i,z_j\in\xbar$, $z_i=z_j\in p$ if and only if $E(z_i,z_j)$.
\end{enumerate}
We claim that it suffices to find $m_k^*$ and $N_k^*$ such that, for any $\bar{\varphi}\in\Phi^k$, $r\in G$,  finite $B\seq A$, nonempty $\xbar\seq\zbar$, and any equivalence relation $E$ on $\xbar$, at most $N_k^*$ types in $S^{\bar{\varphi};r}_{\xbar,E}(B)$ support an $m_k^*$-array. Indeed, there are only finitely many choices for $\xbar$ and $E$; and if $p\in S_{\xbar}^{\bar{\varphi};r}(B)$ is such that $z_i=b\in p$ for some $z_i\in\xbar$ and $b\in B$, then $p$ cannot support a $2$-array. Therefore, setting $m_k=\max\{m_k^*,2\}$ and $N_k=hN_k^*$, where $h$ is the number of pairs $(\xbar, E)$ as above, it follows that $m_k$ and $N_k$ satisfy the desired properties. Define
\begin{align*}
N^*_k &= 2+\max\{N_{k-1}(2^\ell-2):1\leq \ell\leq k\}\text{ and}\\
m^*_k &= 1+\max\{n_{\ell}+(m_{k-1}-1)(2^\ell-2):1\leq \ell\leq k\}.
\end{align*}
Fix $\bar{\varphi}\in\Phi^k$, $r\in G$,  finite $B\seq A$, $\xbar\seq\zbar$ nonempty, and an equivalence relation $E$ on $\xbar$. Let $S^*$ be the set of  types in $S^{\bar{\varphi};r}_{\xbar,E}(B)$ that support an $m^*_k$-array. We want to show $|S^*|\leq N^*_k$.

For $\ubar\seq\zbar$ and $\abar\in A^{\ubar}$, let $\Sigma_{\ubar}\abar$ denote $\sum_{z_i\in \ubar}\varphi_i(a_i)$. Let $\ybar=\zbar\backslash \xbar$. Given $t\in G$, let $S^*(t)$ be the set of types  $p\in S^*$ such that $p\models R_{\bar{\varphi};r}(\xbar;\bbar)$ for some $\bbar\in B^{\ybar}$ satisfying $\Sigma_{\ybar}\bbar = t$. We claim that $|S^{*}(t)|\leq 1$ for any $t\in G$. Indeed, suppose we have $p,q\in S^*(t)$ for some $t\in G$. By construction, $p$ and $q$ agree on atomic formulas in the language of equality. So we just need to show that they agree on instances of $R_{\bar{\varphi};r}(\xbar;\ybar)$. Let $\abar^1,\abar^2\in A^{\xbar}$ realize $p$ and $q$, respectively. Since $p,q\in S^*(t)$, we have $r-\Sigma_{\xbar}\abar^1=t=r-\Sigma_{\xbar}\abar^2$. Given $\dbar\in B^{\ybar}$, we have 
\[
\textstyle p\models R_{\bar{\varphi};r}(\xbar;\dbar) \Leftrightarrow \Sigma_{\ybar}\dbar=r-\Sigma_{\xbar}\abar^1 \Leftrightarrow \Sigma_{\ybar}\dbar=r-\Sigma_{\xbar}\abar^2\Leftrightarrow q\models R_{\bar{\varphi};r}(\xbar;\dbar). 
\]
Altogether, we have $p=q$.

Let $X=\{t\in G:S^*(t)\neq\emptyset\}$ and, for $t\in X$, let $q_t$ be the unique type in $S^*(t)$. Note that there is at most one type in $S^{\bar{\varphi};r}_{\xbar,E}(B)$ which contains $\neg R_{\bar{\varphi};r}(\xbar;\bbar)$ for all $\bbar\in B^{\ybar}$. Altogether, $|S^*|\leq |X|+1$. So, to finish the proof, it suffices to show $|X\backslash\{r\}|\leq N^*_k-2$. Suppose, for a contradiction, that we have pairwise distinct $t_1,\ldots,t_M\in X\backslash\{r\}$, where $M:=N^*_k-1$. For $1\leq i\leq M$, let $p_i=q_{t_i}$. 

Let $\ell=|\xbar|$. 
Fix $i\in [M]$. Since $p_i\in S^*$, we may fix pairwise disjoint realizations $\abar^1,\ldots,\abar^{m^*_k}$ of $p_i$ in $A^{\xbar}$. Moreover, there is $\bbar^i\in B^{\ybar}$ such that $\Sigma_{\ybar}\bbar^i=t_i$ and $p_i\models R_{\bar{\varphi};r}(\xbar;\bbar^i)$. So we have $\Sigma_{\xbar}\abar^j=s_i:=r-t_i$ for all $j\in [m^*_k]$.  In particular, $\abar^1,\ldots,\abar^{m^*_k}\in A((\varphi_j)_{z_j\in\xbar};s_i)$. Since $s_i\neq 0$, we have $|A_0((\varphi_j)_{z_j\in\xbar};s_i)|\leq n_\ell$ and so, after renaming the tuples, we may assume $\abar^1,\ldots,\abar^m\not\in A_0((\varphi_j)_{z_j\in\xbar};s_i)$, where $m:=m^*_k-n_\ell\geq 1+(m_{k-1}-1)(2^\ell-2)$. Let $\Omega$ be the set of nonempty proper subtuples of $\xbar$, and note that $|\Omega|=2^\ell-2$. For each $j\in [m]$, there are $\xbar^{i,j}\in\Omega$ such that $\Sigma_{\xbar^{i,j}}(a^j_l)_{z_l\in\xbar^{i,j}}=0$. Since $m\geq 1+(m_{k-1}-1)(2^\ell-2)$, there are $\xbar'\in\Omega$ and $I\seq[m]$ such that $|I|= m_{k-1}$ and $\xbar^{i,j}=\xbar'$ for all $j\in I$. After renaming tuples, we may assume $I=[m_{k-1}]$. Set $\xbar^i=\xbar\backslash\xbar'\in\Omega$. For $j\in [m_{k-1}]$, let $\abar^{j}_*=(a^j_l)_{z_l\in \xbar^i}$. Then $\abar^1_*,\ldots,\abar^{m_{k-1}}_*$ are pairwise disjoint tuples, which all realize the same type $p^*_i\in S^{\bar{\varphi}^i;r}_{\xbar^i}(B)$, where $\bar{\varphi}^i=(\varphi_j)_{z_j\in \xbar^i}$. So $p^*_i$ supports an $m_{k-1}$-array. Note also that $p^*_i\models R_{\bar{\varphi}^i;r}(\xbar^i,\bbar^i)$. 

Since $M\geq 1+N_{k-1}(2^\ell-2)$, there are $\xbar^*\in\Omega$ and $I\seq[M]$ such that $|I|=N:=N_{k-1}+1$ and $\xbar^i=\xbar^*$ for all $i\in I$. After renaming the types, we may assume $I=[N]$. Let $\bar{\varphi}^*=(\varphi_j)_{z_j\in\xbar^*}$. Then $p^*_1,\ldots,p^*_{N}$ are types in $S^{\bar{\varphi}^*;r}_{\xbar^*}(B)$. For each $i\in[N]$, we have $p^*_i\models R_{\bar{\varphi}^*;r}(\xbar^*,\bbar^i)$ and $\Sigma_{\ybar}\bbar^i=t_i$. So, if $i,j\in[N]$ are distinct, then $p^*_i\neq p^*_j$ since $t_i\neq t_j$. So we have $N$ types in $S^{\bar{\varphi}^*;r}_{\xbar^*}(B)$, each of which supports an $m_{k-1}$-array. This is a contradiction, since $N=N_{k-1}+1$ and $|\xbar^*|\leq k-1$.
\end{proof}

\begin{example}
Note that if $\cG=(G,+)$ is an abelian group, $A\seq G$, and $\Phi$ consists of the maps $x\mapsto x$ and $x\mapsto\nv x$, then $A^{\Phi}_{\cG}$ is interdefinable with $A^{\qf}_{\cG}$. With this choice of $\Phi$, one can use Lemma \ref{lem:ESSMA} to prove mutual algebraicity of $A^{\qf}_G$, where $\cG$ and $A\seq G$ are as in Theorem \ref{thm:Kepler} or Theorem \ref{thm:FGM}. (In the latter case this claim is immediate from Theorem \ref{thm:ESS}; we leave the former case as an exercise.)
\end{example}

We now consider linear recurrence relations.
Let $\Q^{\alg}$ denote the field of algebraic numbers. 
We say that a set $A\seq \Q^{\alg}$ is \textbf{enumerated by a linear recurrence relation} if $A$ is enumerated by a sequence $(a_n)_{n=0}^\infty$ such that, for some $d\geq 1$ and $\beta_1,\ldots,\beta_d\in \Q^{\alg}$, we have 
\[
a_{n+d}=\beta_1a_{n+d-1}+\ldots+\beta_da_n
\]
for any $n\in\N$.  In this case, the \textbf{characteristic polynomial} of $A$, denoted $p_A(x)$, is $x^d-\beta_1x^{d-1}-\ldots-\beta_{d-1}x-\beta_d$. We assume $d$ is minimal, so $p_A(x)$ is uniquely determined. In particular, $\beta_d\neq 0$, and so $0$ is not a root of $p_A(x)$.  Let $\mu_1,\ldots,\mu_{d_*}\in \Q^{\alg}$ be the  distinct roots of $p_A(x)$ where $d_*\leq d$. By the general theory, there are nonzero polynomials $\alpha_1(x),\ldots,\alpha_{d_*}(x)\in\Q^{\alg}[x]$ such that $\alpha_i(x)$ has degree strictly less than the multiplicity of $\mu_i$ as a root of $p_A(x)$, and, for any $n\in\N$,
\[
a_n=\alpha_1(n)\mu_1^n+\ldots+\alpha_{d_*}(n)\mu_{d_*}^n.
\]
As the set $A$ is completely determined by $\beta_1,\ldots,\beta_d$, and $a_0,\ldots,a_{d-1}$, we sometimes identify $A$ with the notation $\text{LRR}(\beta_1,\ldots,\beta_d;a_0,\ldots,a_{d-1})$.

We are interested in stable expansions of weakly minimal subgroups of $\Q^{\alg}$ by sets enumerated by a linear recurrence relation. For $(\Z,+)$, the previous literature on this question is as follows. Palac\'{i}n \& Sklinos \cite{PaSk} and Poizat \cite{PoZ} proved stability for the expansion of $(\Z,+)$ by $\{q^n:n\in\N\}=\text{LRR}(q;1)$, where $q\geq 2$.\footnote{The first author recently learned that this also follows from an older result of Moosa and Scanlon, namely, \cite[Theorem 6.11]{MooScan}.} In \cite{CoSS}, the first author proved stability of $(\Z,+,A)$, for any $A\seq\Z$ enumerated by linear recurrence relation such that $p_A(x)$ is separable (so $d_*=d$), and there is some $1\leq t\leq d$ such that $\mu_t\in\R_{>1}$ and $|\mu_i|\leq 1$ for all $i\neq t$ (e.g., the Fibonacci sequence $\text{LRR}(1,1,0,1)$). In \cite{LaPo}, Lambotte and Point proved stability for the case when $p_A(x)$ is irreducible over $\Q$ and there is some $1\leq t\leq d$ such that $\mu_t\in\R_{>1}$ and $|\mu_i|<|\mu_t|$ for all $i\neq t$. There are also well-known examples of unstable expansions of $(\Z,+)$ by linear recurrences. For instance, given $k\geq 1$, the set $P_k:=\{n^k:n\in\N\}$ is enumerated by a linear recurrence with characteristic polynomial $(x-1)^{k+1}$. By the Hilbert-Waring Theorem, $(\Z,+,P_k)$ defines the ordering, and it also defines multiplication when $k\geq 2$ (see \cite[Proposition 6]{Bes}).  Another  unstable example is the expansion of $(\Z,+)$ by $\{q^n+n:n\in\N\}$, for any fixed integer $q\geq 2$, which is enumerated by a recurrence relation with characteristic polynomial $(x-q)(x-1)^2$ (see \cite[Theorem 4.8]{CoGG}).  In this section, we separate the stable examples from the unstable ones using the observation that, in each unstable example above, $1$ is a repeated root of $p_A(x)$.

\begin{theorem}\label{thm:LRR}
Suppose $\cG=(G,+)$ is a weakly minimal subgroup of $(\Q^{\alg},+)$, and $A\seq G$ is enumerated by a linear recurrence relation such that no repeated root of the characteristic polynomial  is a root of unity. Then, for any finite $F\seq G$ and any $B\seq A+F$, $(\cG,B)$ has nfcp and is superstable of $U$-rank at most $\omega$.
\end{theorem}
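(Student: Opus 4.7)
The plan is to reduce the theorem, via Theorem \ref{thm:MAG-main} together with Remark \ref{rem:subgroup}, to showing that $A^{\qf}_{\cK}$ is mutually algebraic for a suitable number field $\cK$ with $A\seq\cK$ and $\cG$ a subgroup of $(\cK,+)$. Once this is established, $A^{\qf}_{\cG}$ is a reduct of $A^{\qf}_{\cK}$, and the stability, $U$-rank bound, and nfcp conclusions for $(\cG,B)$ follow directly from Theorem \ref{thm:MAG-main}.

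Fix an enumeration $A=\{a_n\}_{n=0}^{\infty}$ witnessing the recurrence, and write $a_n=\sum_{j=1}^{d_*}\alpha_j(n)\mu_j^n$, where $\mu_1,\ldots,\mu_{d_*}$ are the distinct characteristic roots of $p_A$ and each $\alpha_j\in\Q^{\alg}[x]$. Enlarge $\cK$ so that it contains every $\mu_j$ and every coefficient of every $\alpha_j$, and define polynomial-exponential maps $\varphi_j\colon\N\to\cK$ by $\varphi_j(n)=\alpha_j(n)\mu_j^n$. Set $\Phi=\{\pm\varphi_j:j\in[d_*]\}$, and consider the auxiliary structure $\N^{\Phi}_{\cK}$ from Section \ref{sec:ESS}. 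The surjection $n\mapsto a_n$ interprets $A^{\qf}_{\cK}$ in $\N^{\Phi}_{\cK}$: the equivalence relation $n\sim m\iff a_n=a_m$ is cut out by the atomic formula $\sum_j\varphi_j(n)-\sum_j\varphi_j(m)=0$, and any atomic $A^{\qf}_{\cK}$-relation of the form $\{\abar\in A^k:\sum_i\varepsilon_i a_i=r\}$ with $\varepsilon_i\in\{\pm 1\}$ pulls back to $\{\nbar\in\N^k:\sum_{i,j}\varepsilon_i\varphi_j(n_i)=r\}$, which is atomic in $\N^{\Phi}_{\cK}$ after identifying repeated variables. Assuming $\N^{\Phi}_{\cK}$ is mutually algebraic, the fibers of $n\mapsto a_n$ are automatically finite (apply mutual algebraicity to the relation defining $\sim$), so $A$ has $U$-rank $1$ as an interpretation in $\N^{\Phi}_{\cK}$, and Corollary \ref{cor:MAint} yields mutual algebraicity of $A^{\qf}_{\cK}$.

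By Proposition \ref{prop:ESSMA}, mutual algebraicity of $\N^{\Phi}_{\cK}$ reduces to verifying that $\N$ has the ESS property with respect to $\Phi$ and $(\cK,+)$: for each $k$, we must uniformly bound the number of tuples $\nbar\in\N^k$ satisfying $\sum_i\varepsilon_i\alpha_{j_i}(n_i)\mu_{j_i}^{n_i}=r$, whenever $r$ avoids some finite set $U_k$ and no proper subsum of the left-hand side lies in some finite set $V_k$. This is precisely the regime controlled by the quantitative Schlickewei--Schmidt theorem on polynomial-exponential equations over number fields, which bounds the number of non-degenerate solutions by a constant depending only on $k$ and $\Phi$ once $r$ avoids a finite exceptional set.

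The main obstacle, and the place where the hypothesis on repeated roots is essential, is the handling of degenerate solutions. A vanishing proper subsum $\sum_{i\in I}\varepsilon_i\alpha_{j_i}(n_i)\mu_{j_i}^{n_i}=0$ can be rewritten as a unit equation in the multiplicative group generated by the $\mu_j$'s (with polynomial coefficients), and if some repeated root of $p_A$ is a root of unity, one produces genuinely infinite families of degenerate solutions that cannot be absorbed into any finite $U_k$ or $V_k$; exactly this phenomenon drives the unstable examples such as $\{n^k:n\in\N\}$ and $\{q^n+n:n\in\N\}$. The hypothesis precludes this, ensuring that every infinite family of solutions witnessed by vanishing subsums lies in a $V_k$ built inductively from the finitely many possible subsum values. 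Packaging the Schlickewei--Schmidt count, together with this inductive analysis of degeneracies, into the exact uniform form demanded by the ESS property is the technical heart of the argument.
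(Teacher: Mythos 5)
Your high-level architecture matches the paper's: verify that $\N$ has the ESS property with respect to a family $\Phi$ of polynomial-exponential maps, invoke Proposition \ref{prop:ESSMA} and Corollary \ref{cor:MAint} to get $A^{\qf}_{\cK}$ mutually algebraic, then finish with Theorem \ref{thm:MAG-main} and Remark \ref{rem:subgroup}. The key divergence is in how you set up $\Phi$ and where you spend the ``no repeated root is a root of unity'' hypothesis, and this is where your sketch has a gap.

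You put \emph{every} characteristic root $\mu_j$, including the roots of unity, into $\Phi$ via $\varphi_j(n)=\alpha_j(n)\mu_j^n$, and you claim the ESS bound is ``precisely the regime controlled'' by Schlickewei--Schmidt (Theorem \ref{thm:QLT}). It is not: Theorem \ref{thm:QLT} requires hypothesis $(ii)$, that $\bar\lambda_1^{\nbar}=\cdots=\bar\lambda_m^{\nbar}$ forces $\nbar=\bar 0$, and this fails as soon as any $\mu_{j_i}$ is a root of unity (even a \emph{simple} one: take $n_i$ equal to its order and all other $n_\ell=0$). So the theorem simply does not apply to tuples $\bar\varphi$ that involve a root-of-unity term, and your appeal to it is not justified as stated. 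Relatedly, your last paragraph misattributes the role of the hypothesis: the crucial consequence of ``no repeated root is a root of unity'' is that for every root of unity $\mu$, the polynomial $\alpha_\mu$ is a nonzero \emph{constant}, so $\varphi_\mu$ is a \emph{finite-valued} function. That finiteness, not a unit-equation phenomenon about vanishing subsums, is what the paper exploits.

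The paper sidesteps your issue entirely: it defines $\Phi$ using only the functions attached to $\Lambda_0$, the non-roots-of-unity, so that hypothesis $(ii)$ of Theorem \ref{thm:QLT} is immediate (Lemma \ref{lem:LRRESSMA}). It then packages the entire root-of-unity contribution into the set $F=\{\sum_{\lambda\in\Lambda_1}\alpha^*_\lambda\lambda^n : n\in\N\}$, which is \emph{finite} precisely because each $\alpha^*_\lambda$ is constant, and observes $A\seq B+F$. Mutual algebraicity then propagates: from $\N^{\Phi}_{\cK}$ to $B^{\qf}_{\cK}$ via a quotient interpretation (Lemma \ref{lem:BMA}), and from $B^{\qf}_{\cK}$ to $A^{\qf}_{\cK}$ via Corollary \ref{cor:MAint} applied to the finite translate (Corollary \ref{cor:AMA}); finally Theorem \ref{thm:MAG-main} is applied to $B\seq A+F$ for arbitrary finite $F$. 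Your direct approach, with the full $\Phi$, can in principle be repaired — for $k\geq 2$, absorb the finitely many values of the root-of-unity maps into $V_k$ so that $\N_{V_k}(\bar\varphi;r)=\emptyset$ whenever $\bar\varphi$ contains such a map, and for $k=1$ absorb those values into $U_1$, reducing to the genuinely non-degenerate case — but this is exactly the case split the paper makes upstream by bifurcating $\Lambda$. As written, your proposal leaves this repair as ``the technical heart,'' which is precisely the part that needs to be done; without it, the Schlickewei--Schmidt invocation is not valid.
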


Note that this is a significant generalization of the previous results described above, since it includes any case where $p_A(x)$ is separable. We also note that the absence of roots of unity as repeated roots of $p_A(x)$ does not characterize stability of $(G,+,A)$ (see Remark \ref{rem:LRRstableunity} below). 

To prove Theorem \ref{thm:LRR}, we will use Lemma \ref{lem:ESSMA}, together with a number-theoretic tool of a similar flavor as Theorem \ref{thm:ESS}. To state this result, we need some further notation. For the rest of this section, let $A\seq \Q^{\alg}$ be enumerated by linear recurrence relation. We may assume $A$ is infinite. Fix a number field $K\seq\Q^{\alg}$ containing $\mu_1,\ldots,\mu_{d_*}$ and the coefficients of $\alpha_1(x),\ldots,\alpha_{d_*}(x)$. Given an integer $k\geq 1$ and a tuple $\bar{\lambda}=(\lambda_1,\ldots,\lambda_k)\in (K^*)^k$, define the function $\bar{\lambda}^{\xbar}\colon\Z^k\to K$ such that $\bar{\lambda}^{(n_1,\ldots,n_k)}=\lambda_1^{n_1}\cdot\ldots\cdot \lambda_k^{n_k}$. The following result (which holds for any number field) is a quantitative version of work of Laurent \cite{MLaur1,MLaur2}, due to Schlickewei and Schmidt \cite{SchSchPEE} (see also \cite[Theorem 12.1]{SchLRS}).

\begin{theorem}[Schlickewei \& Schmidt]\label{thm:QLT}
Fix $k,m\geq 1$ and, for each $i\in [m]$, fix $\bar{\lambda}_i\in (K^*)^k$ and $P_i(x_1,\ldots,x_k)\in K[x_1,\ldots,x_k]$ of degree $\delta_i$. Assume:
\begin{enumerate}[$(i)$]
\item no $P_i(\xbar)$ is identically $0$, and 
\item for any $\nbar\in\Z^k$, if $\bar{\lambda}_1^{\nbar}=\ldots=\bar{\lambda}_m^{\nbar}$ then $\nbar=\bar{0}$.
\end{enumerate}
Then there are $O_{K,m,k,\delta_1,\ldots,\delta_m}(1)$ tuples $\nbar\in\Z^k$ such that $\sum_{i=1}^mP_i(\nbar)\bar{\lambda}_i^{\nbar}=0$ and $\sum_{i\in I}P_i(\nbar)\bar{\lambda}_i^{\nbar}\neq 0$ for any nonempty $I\subsetneq [m]$.
\end{theorem}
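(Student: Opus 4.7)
The strategy is to reduce to mutual algebraicity of $A^{\qf}_{\cG}$ and exhibit it as interpretable in a mutually algebraic structure on $\N$. By Theorem~\ref{thm:MAG-main} (via Remark~\ref{rem:subgroup} with ambient group $(\Q^{\alg},+)$), it suffices to show $A^{\qf}_{\cG}$ is mutually algebraic. Let $K \seq \Q^{\alg}$ be a number field containing all roots $\mu_i$ and all coefficients of the polynomials $\alpha_i(x)$, and set $\cK = (K,+)$. I will construct a finite set $\Phi$ of functions $\N \to K$ such that $\N$ has the ESS property with respect to $\Phi$ and $\cK$ (so $\cB := \N^{\Phi}_{\cK}$ is mutually algebraic by Proposition~\ref{prop:ESSMA}), form a unary expansion $\cB^*$ (still mutually algebraic by Theorem~\ref{thm:unary}$(b)$) by adding predicates for residues mod a suitable $N$, and interpret $A^{\qf}_{\cG}$ in $\cB^*$ via the bijection $n \mapsto a_n$; since the image of $A$ has $U$-rank $1$, Corollary~\ref{cor:MAint} finishes the job.

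Unpacking $a_n = \sum_{i=1}^{d_*}\alpha_i(n)\mu_i^n$ rewrites each atomic $\sum_j c_j a_{n_j} = r$ of $A^{\qf}_{\cG}$ as a sum of $kd_*$ terms, each in one variable $n_j$. This has the shape of an atomic relation of $\cB$ on a tuple in which each $n_j$ is repeated $d_*$ times, and $A^{\qf}_{\cG}$ is recovered by restriction to that diagonal --- a restriction that preserves mutual algebraicity since fixing one diagonal coordinate fixes one ambient coordinate, bounding fibers. The residue-class unary predicates in $\cB^*$ will allow this interpretation to absorb the mod-$N$ decomposition needed in the ESS verification below.

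The technical heart is verifying the ESS property via Theorem~\ref{thm:QLT}. Rewriting the equation $\sum_j \varphi_j(n_j) = r$ as $\sum_j \varphi_j(n_j) + (-r) = 0$ allows Theorem~\ref{thm:QLT} to apply with $m = k+1$ terms, the extra constant term having base $\bar 1$; hypothesis~$(i)$ holds once $r \neq 0$ (so take $U_k = \{0\}$), and hypothesis~$(ii)$ becomes the requirement that the bases in $\Phi$ are of infinite order and that the only $\nbar$ producing simultaneously trivial exponentiations is $\nbar = \bar 0$. The naive $\Phi = \{n \mapsto \pm\alpha_i(n)\mu_i^n : i \in [d_*]\}$ fails this whenever some $\mu_i$ is a root of unity. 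To repair it, I choose $N \geq 1$ as a common multiple of the orders of the $\mu_i$'s that are roots of unity (which are \emph{simple} roots by hypothesis, so their $\alpha_i$'s are nonzero constants) and of the orders of the root-of-unity ratios $\mu_i/\mu_{i'}$. Substituting $n = Nm + s$ yields
\[
a_{Nm+s} = \gamma_s + \sum_C Q_{C,s}(m)\,\tilde\lambda_C^{\,m},
\]
where $C$ ranges over equivalence classes of non-root-of-unity $\mu_i$'s under $\mu_i^N = \mu_{i'}^N$, $\tilde\lambda_C$ is the common value $\mu_i^N$ for $i\in C$, and $Q_{C,s}(x) \in K[x]$ is an explicit polynomial. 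I take the refined $\Phi$ to consist of the piecewise functions $n \mapsto \pm Q_{C,s}((n-s)/N)\tilde\lambda_C^{(n-s)/N}$ when $n \equiv s \pmod N$ (and $0$ otherwise), across all signs, classes $C$, and residues $s \in [N]$ for which $Q_{C,s}$ is nonzero; the constants $\gamma_s$ are absorbed into the parameter $r$.

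With this refinement, each $\tilde\lambda_C$ has infinite order and $\tilde\lambda_C/\tilde\lambda_{C'}$ has infinite order for $C \neq C'$, so hypothesis~$(ii)$ holds cleanly: the augmented equation forces every exponent to vanish. Taking $V_k = \{0\}$, any ESS-surviving solution has every proper subsum nonzero, and by complementarity (if some proper subsum equals $r$, its complement equals $0$) also has no proper subsum equal to $r$, so Theorem~\ref{thm:QLT} supplies the required uniform $O_k(1)$ bound. The main obstacle I anticipate is the bookkeeping: ensuring the piecewise $\Phi$ and the residue predicates fit cleanly, that non-residue arguments contribute the value $0$ to singleton subsums (thus automatically excluded by $V_k \ni 0$, which reduces the count uniformly to a single residue class $\sbar \in [N]^k$), and that each $Q_{C,s}$ appearing in $\Phi$ is nonzero (those that are identically zero being simply omitted).
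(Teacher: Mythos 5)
Your proposal does not prove the statement in question. The statement is Theorem \ref{thm:QLT} itself --- the Schlickewei--Schmidt bound on the number of non-degenerate solutions of a polynomial-exponential equation over a number field. What you have written is instead a proof plan for Theorem \ref{thm:LRR} (stability of weakly minimal groups expanded by linear recurrences), and it treats Theorem \ref{thm:QLT} as a black box: you say explicitly that ``the technical heart is verifying the ESS property via Theorem \ref{thm:QLT}.'' Using a theorem as a tool is not the same as proving it, so as a proof of the stated result the proposal is vacuous.

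For what it is worth, the paper does not prove Theorem \ref{thm:QLT} either; it is quoted from the number-theoretic literature (Schlickewei and Schmidt, \emph{The number of solutions of polynomial-exponential equations}, as a quantitative version of Laurent's work; see also Schmidt's survey on linear recurrence sequences). An actual proof would require the quantitative Subspace Theorem and the Evertse--Schlickewei--Schmidt machinery for unit equations (the same circle of ideas as Theorem \ref{thm:ESS}), none of which appears in your argument. If your intent was to prove Theorem \ref{thm:LRR}, your outline is broadly in the spirit of Section \ref{sec:LRR} of the paper (pass to a number field, build an auxiliary structure $\N^{\Phi}_{\cK}$ with the ESS property, then interpret $A^{\qf}_{\cG}$ in it), though the paper handles the roots of unity by splitting $a_n$ into an infinite part $B$ and a finite translate set $F$ rather than by passing to residues modulo $N$; but that is a different statement from the one you were asked to prove.
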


Given $i\in[d_*]$, will  use the notation $\alpha^*_{\mu_i}(x)$ for $\alpha_i(x)$.  Set $\Lambda=\{\mu_1,\ldots,\mu_{d_*}\}$, and partition $\Lambda=\Lambda_0\cup\Lambda_1$ so that $\mu_i\in\Lambda_1$ if and only if $\mu_i$ is a root of unity. Let $\Phi$ denote the set of functions from $\N$ to $K$ of the form $x\mapsto c\alpha^*_\lambda(x)\lambda^x$ for some $\lambda\in\Lambda_0$ and $c\in\{1,\nv 1\}$. Let $\cK=(K,+)$ be the additive group in $K$.

\begin{lemma}\label{lem:LRRESSMA}
$\N^\Phi_{\cK}$ is mutually algebraic.
\end{lemma}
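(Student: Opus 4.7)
The plan is to deduce mutual algebraicity of $\N^\Phi_{\cK}$ from Proposition \ref{prop:ESSMA} by verifying that $\N$ has the ESS property with respect to $\Phi$ and $\cK$, taking $U_k = V_k = \{0\}$ for every $k \geq 1$. The condition to check is that there is a constant $n_k$ bounding $|\N_{\{0\}}(\bar{\varphi}; r)|$ uniformly in $\bar{\varphi}\in\Phi^k$ and $r\in K\setminus\{0\}$. The natural approach is to rewrite the equation $\sum_{i=1}^k \varphi_i(n_i) = r$ as a polynomial-exponential equation in standard form and then invoke Theorem \ref{thm:QLT}.

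Concretely, fix $\bar{\varphi}\in\Phi^k$, where $\varphi_i(x)=c_i\alpha^*_{\lambda_i}(x)\lambda_i^x$ with $\lambda_i\in\Lambda_0$ and $c_i\in\{1,\nv 1\}$. For $i\in[k]$ take $\bar{\lambda}_i\in(K^*)^k$ to be the vector with $\lambda_i$ in the $i$-th coordinate and $1$ elsewhere, and set $P_i(\xbar)=c_i\alpha^*_{\lambda_i}(x_i)$. Introduce one extra term $\bar{\lambda}_{k+1}=\bar{1}$ with $P_{k+1}(\xbar)=\nv r$, so that the equation $\sum_{i=1}^k\varphi_i(n_i)=r$ becomes $\sum_{i=1}^{k+1}P_i(\nbar)\bar{\lambda}_i^{\nbar}=0$. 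Each $P_i$ is nonzero since the polynomials $\alpha^*_{\lambda_i}$ are nonzero, $c_i\ne 0$, and $r\ne 0$. For condition $(ii)$ of Theorem \ref{thm:QLT}, equality $\bar{\lambda}_1^{\nbar}=\dots=\bar{\lambda}_{k+1}^{\nbar}$ forces $\lambda_i^{n_i}=1$ for all $i\in[k]$, and since each $\lambda_i\in\Lambda_0$ is \emph{not} a root of unity we conclude $\nbar=\bar 0$.

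The key observation is that the ESS condition with $V_k=\{0\}$ already implies the ``no vanishing subsum'' hypothesis in the conclusion of Theorem \ref{thm:QLT}. Indeed, for a nonempty $I\subsetneq[k+1]$ the subsum $\sum_{i\in I}P_i(\nbar)\bar{\lambda}_i^{\nbar}$ has one of the following shapes: $\sum_{i\in I}\varphi_i(n_i)$ for nonempty proper $I\subseteq[k]$, which is nonzero by ESS and equals $r\ne 0$ when $I=[k]$; or $\sum_{i\in J}\varphi_i(n_i)-r$ with $J\subsetneq[k]$, which is $\nv r\ne 0$ when $J=\emptyset$, and for $J$ nonempty proper would force $\sum_{i\in [k]\setminus J}\varphi_i(n_i)=0$ by complementary pairing, contradicting the ESS condition applied to the nonempty proper subset $[k]\setminus J$. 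Hence Theorem \ref{thm:QLT} bounds $|\N_{\{0\}}(\bar{\varphi};r)|$ by a constant $n_k$ depending only on $K$, on $k$, and on the $\delta_i=\deg P_i$; since the degrees $\deg\alpha^*_\lambda$ are bounded uniformly over the finite set $\Lambda_0$, this bound is indeed independent of $\bar{\varphi}$ and $r$. This establishes the ESS property and completes the proof via Proposition \ref{prop:ESSMA}.

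The main (minor) obstacle is choosing $U_k$, $V_k$, and the auxiliary term $(P_{k+1},\bar{\lambda}_{k+1})$ so that both the independence hypothesis $(ii)$ of Theorem \ref{thm:QLT} and the ``no vanishing subsum'' clause transfer cleanly from the ESS hypothesis. The trick is precisely the complementary-pairing observation: a $V_k$ as small as $\{0\}$ suffices because non-vanishing of all proper partial sums over $[k]$ forbids any proper partial sum from taking the value $r$, and the value $r$ is exactly what must be excluded from proper subsums involving the extra $(k+1)$-st index.
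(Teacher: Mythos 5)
Your proposal is correct and follows essentially the same route as the paper: reduce to the ESS property via Proposition \ref{prop:ESSMA} with $U_k=V_k=\{0\}$, encode $\sum_i\varphi_i(n_i)=r$ as a polynomial-exponential equation with an extra constant term $P_{k+1}=\nv r$, verify hypothesis $(ii)$ using that the $\lambda_i\in\Lambda_0$ are not roots of unity, and invoke Theorem \ref{thm:QLT} with the uniform degree bound coming from the finitely many $\alpha^*_\lambda$. Your complementary-pairing check that no proper subsum over $[k+1]$ vanishes is a point the paper's write-up passes over quickly, and you handle it correctly.
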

\begin{proof}
We will apply Lemma \ref{lem:ESSMA}. In particular, we show that for any $k\geq 1$, there is some $w_k\in\N$ such that $|\N_{0}(\bar{\varphi};r)|\leq w_k$ for any $\bar{\varphi}\in\Phi^k$ and $r\in K^*$. In particular, let $\delta$ be the maximum degree of any $\alpha_i(x)$, for $i\in [d_*]$. Given $k\geq 1$, let $w_k\in\N$ be greater than the $O_{K,k+1,k,\delta_1,\ldots,\delta_{k+1}}(1)$ bound from Theorem \ref{thm:QLT}, for any $\delta_1,\ldots,\delta_{k+1}\leq\delta$. 

Fix $k\geq 1$, $\bar{\varphi}\in\Phi^k$, and $r\in K^*$. For $i\in [k]$, let $c_i\in\{1,\nv 1\}$ and $\lambda_i\in\Lambda_0$ be such that $\varphi_i(x)=c_i\alpha^*_{\lambda_i}(x)\lambda_i^n$, and let $P_i(\xbar)\in K[x_1,\ldots,x_k]$ be the polynomial $c_i\alpha_{\lambda_i}(x_i)$. Let $P_{k+1}(\xbar)=\nv r$. For $i\in[k]$, let $\bar{\lambda}_i=(1,\stackrel{i-1}{\ldots}\,,1,\lambda_i,1,\stackrel{k-i}{\ldots}\,,1)\in (K^*)^k$. Let $\bar{\lambda}_{k+1}=(1,\stackrel{k}{\ldots}\,,1)$. Note that for any $\nbar\in\Z^k$, $\bar{\lambda}_{k+1}^{\nbar}=1$ and $\bar{\lambda}_i^{\nbar}=\lambda_i^{n_i}$ for any $i\in[k]$. In particular, $\N_0(\bar{\varphi};r)$ is precisely the set of solutions to $\sum_{i=1}^m P_i(\xbar)\bar{\lambda}_i^{\xbar}=0$ in $\N^k$ such that $\sum_{i\in I}P_i(\xbar)\bar{\lambda}_i^{\xbar}\neq 0$ for all nonempty $I\subsetneq[k]$.

Suppose $\nbar\in \Z^k$ is such that $\bar{\lambda}_1^{\nbar}=\ldots=\bar{\lambda}_{k+1}^{\nbar}$. Then $\lambda_1^{n_1}=\ldots=\lambda_k^{n_k}=1$, and so $n_i=0$ for all $i\in[k]$ since $\lambda_i$ is not a root of unity. Altogether, by Theorem \ref{thm:QLT}, we have $|\N_0(\bar{\varphi};r)|\leq w_k$. 
\end{proof}

We now assume that no $\lambda\in\Lambda_1$ is a repeated root of $p_A(x)$, and so $\alpha^*_{\lambda}(x)$ is a constant $\alpha^*_{\lambda}\in K^*$. 
Define
\[
B=\left\{\sum_{\lambda\in\Lambda_0}\alpha^*_{\lambda}(n)\lambda^n:n\in\N\right\}\makebox[.5in]{and}F=\left\{\sum_{\lambda\in \Lambda_1}\alpha^*_{\lambda}\lambda^n:n\in\N\right\}.
\]
Note that $B,F\seq K$ and $A\seq B+F$. Moreover, if $\lambda\in\Lambda$ then  $\{\lambda^n:n\in\N\}$ is finite if and only if $\lambda\in\Lambda_1$. So $F$ is finite and $B$ is infinite.

\begin{lemma}\label{lem:BMA}
$B^{\qf}_{\cK}$ is mutually algebraic.
\end{lemma}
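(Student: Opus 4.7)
The plan is to interpret $B^{\qf}_{\cK}$ in $\N^{\Phi}_{\cK}$ via the surjection $\pi\colon\N\to B$ given by $\pi(n) = \sum_{\lambda\in\Lambda_0}\alpha^*_{\lambda}(n)\lambda^n$, and then invoke Lemma \ref{lem:LRRESSMA} together with Corollary \ref{cor:MAint} to conclude.

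First, I would verify that the equivalence relation $n\sim m \iff \pi(n)=\pi(m)$ on $\N$ is $\emptyset$-definable in $\N^{\Phi}_{\cK}$. Expanding, $\pi(n)-\pi(m)=0$ reads
\[
\sum_{\lambda\in\Lambda_0}\alpha^*_{\lambda}(n)\lambda^n + \sum_{\lambda\in\Lambda_0}(\nv\alpha^*_{\lambda}(m))\lambda^m = 0,
\]
which is the atomic relation $R_{\bar{\varphi};0}$ of $\N^{\Phi}_{\cK}$ applied to a suitable tuple in which the first $|\Lambda_0|$ coordinates are identified with $n$ and the remaining $|\Lambda_0|$ with $m$ (each $\alpha^*_{\lambda}(\cdot)\lambda^{\cdot}$ and its negative lie in $\Phi$, since no $\lambda\in\Lambda_0$ is a root of unity and the multiplicity-free assumption on $\Lambda_1$ has no bearing on $\Lambda_0$).

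Next, I would observe that the atomic $k$-ary relations of $B^{\qf}_{\cK}$ are of the form $\{\bar{b}\in B^k : c_1b_1+\cdots+c_kb_k=r\}$ with $c_i\in\{1,\nv 1\}$ and $r\in K$ (by quantifier elimination in the abelian group $\cK$, exactly as in the proof of Proposition \ref{prop:Gunary}). Pulling such a relation back along $\pi$ yields $\{\bar{n}\in\N^k : \sum_{i,\lambda} c_i\alpha^*_{\lambda}(n_i)\lambda^{n_i}=r\}$, which is again an instance of $A(\bar{\varphi};r)$ from $\N^{\Phi}_{\cK}$ with variables appropriately identified (since each $c_i\alpha^*_{\lambda}(\cdot)\lambda^{\cdot}\in\Phi$). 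This pullback is visibly $\sim$-invariant, and so descends to a $\emptyset$-definable relation on $\N/\!\sim\,\cong B$. Thus $B^{\qf}_{\cK}$ is interpretable in $\N^{\Phi}_{\cK}$.

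Finally, since $\N^{\Phi}_{\cK}$ is mutually algebraic by Lemma \ref{lem:LRRESSMA}, it is weakly minimal, so $\N$ has $U$-rank $1$ as its universe. Because $\pi$ is a definable surjection from $\N$ onto $B$ (viewed as $\N/\!\sim$ in $(\N^{\Phi}_{\cK})^{\eq}$) and $U$-rank does not increase under definable surjections in stable theories, $B$ has $U$-rank at most $1$; since $B$ is infinite it has $U$-rank exactly $1$. Corollary \ref{cor:MAint} then yields that $B^{\qf}_{\cK}$ is mutually algebraic. I do not anticipate a serious obstacle: the entire argument is a bookkeeping exercise unpacking $\pi$ into $\Phi$-atomic relations, with all the genuine content already absorbed into Lemma \ref{lem:LRRESSMA} (and hence into the quantitative Laurent-type bound of Schlickewei and Schmidt).
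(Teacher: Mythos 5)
Your proposal is correct and takes essentially the same route as the paper: both arguments pull the atomic linear relations of $B^{\qf}_{\cK}$ back along the natural surjection $\pi\colon\N\to B$, observe that each pullback (with coordinates repeated $|\Lambda_0|$ times) is an instance of a $\Phi$-relation in $\N^{\Phi}_{\cK}$ and that the fiber equivalence relation is the special case $r=0$, and then invoke Lemma \ref{lem:LRRESSMA} with Corollary \ref{cor:MAint}. Your explicit check that $B\cong\N/\!\sim$ has $U$-rank $1$ (as required by Corollary \ref{cor:MAint}) is left implicit in the paper but is a correct and welcome inclusion.
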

\begin{proof}
Let $\Lambda_0=\{\lambda_1,\ldots,\lambda_{\ell}\}$ with $\ell\leq d_*$.
For $k\geq 1$, $\cbar\in\{1,\nv 1\}^k$, and $r\in K$, set
\[
D_{\cbar;r}=\left\{\nbar\in \N^k:\sum_{t=1}^k\sum_{i=1}^{\ell}c_t\alpha^*_{\lambda_i}(n_{t})\lambda_i^{n_{t}}=r\right\}.
\]
Then $D_{\cbar;r}$ is $\emptyset$-definable in $\N^{\Phi}_{\cK}$ since $\nbar\in D_{\cbar;r}$ if and only if, setting   
\[
\nbar^t=(n_t,\stackrel{\ell}{\ldots}\,,n_t)\makebox[.5in]{and}\bar{\varphi}_t=(c_t\alpha^*_{\lambda_1}(x)\lambda_1^x,\ldots,c_t\alpha^*_{\lambda_\ell}(x)\lambda_{\ell}^x)
\]
for $t\in [k]$, we have $(\nbar^1,\ldots,\nbar^k)\in A((\bar{\varphi}_1,\ldots,\bar{\varphi}_k);r)$. Let $E$ be the equivalence relation on $\N$ such that $E(m,n)$ holds if and only if
\[
\sum_{i=1}^{\ell}\alpha^*_{\lambda_i}(m)\lambda_i^{m}=\sum_{i=1}^{\ell}\alpha^*_{\lambda_i}(n)\lambda_i^{n}.
\]
Then $E$ is defined by $D_{(1,\nv1);0}\seq \N^2$, and thus is $\emptyset$-definable in $\N^{\Phi}_{\cK}$. Note also that, for any $\cbar\in\{1,\nv 1\}^k$ and $r\in K$, $D_{\cbar;r}$ is $E$-invariant as a subset of $\N^k$. 

Now $B^{\qf}_{\cK}$ is clearly interdefinable with the structure with universe $\N/E$ and relations $D_{\cbar;r}/E$ for all $k\geq 1$, $\cbar\in\{1,\nv 1\}^k$, and $r\in K$. So $B^{\qf}_{\cK}$ is mutually algebraic by Lemma \ref{lem:LRRESSMA} and Corollary \ref{cor:MAinter}. 
\end{proof}

\begin{corollary}\label{cor:AMA}
$A^{\qf}_{\cK}$ is mutually algebraic.
\end{corollary}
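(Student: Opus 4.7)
The plan is to exhibit $A^{\qf}_{\cK}$ as a structure interpretable in a mutually algebraic expansion of $B^{\qf}_{\cK}$, whose universe has $U$-rank $1$, and then apply Corollary \ref{cor:MAint}.

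First, I expand $B^{\qf}_{\cK}$ by constants for each element of the finite set $F$ and by a unary predicate for $A$; by Theorem \ref{thm:unary}$(b)$ this expansion $\cM$ is mutually algebraic. Consider the $\cM$-definable set $X = \{(b,f)\in B\times F : b+f\in A\}$ together with the equivalence relation
\begin{equation*}
E\bigl((b_1,f_1),(b_2,f_2)\bigr) \iff b_1 - b_2 = f_2 - f_1
\end{equation*}
on $X$. Because $f_2 - f_1$ ranges over the finite set $F-F$, each of whose elements is named by a constant in $\cM$, the relation $E$ is quantifier-free $\cM$-definable.

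The map $[(b,f)]\mapsto b+f$ is a bijection from $X/E$ onto $A$. Under this bijection, every quantifier-free $\cK$-definable relation $R\seq A^k$ pulls back to an $\cM$-definable relation on $X^k/E^k$: a defining equation $\sum_{i=1}^k c_i x_i = r$ becomes $\sum_{i=1}^k c_i b_i = r - \sum_{i=1}^k c_i f_i$, and for each of the finitely many choices of $\fbar\in F^k$ this is a quantifier-free $\cK$-condition on $\bbar$ using the named constants; Boolean combinations handle the general case. Hence $A^{\qf}_{\cK}$ is interpretable in $\cM$.

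Finally, since $\cM$ is mutually algebraic, $B$ has $U$-rank $1$ in $\cM$ by Theorem \ref{thm:MAchar}$(iii)$; as $F$ is finite, the set $X\seq B\times F$ has $U$-rank $1$ in $\cM^{\eq}$, so the same is true of $X/E$. Corollary \ref{cor:MAint} now yields that $A^{\qf}_{\cK}$ is mutually algebraic. The only subtle point is making sure $E$ is definable in a mutually algebraic structure, which is precisely why one must name the elements of $F$ as constants at the outset; had $F$ been infinite this step would be a real obstacle, but the finiteness built into the decomposition $A\seq B+F$ from Lemma \ref{lem:BMA} makes it automatic.
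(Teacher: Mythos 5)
Your overall plan—interpret $A^{\qf}_{\cK}$ in a mutually algebraic expansion of $B^{\qf}_{\cK}$ via a quotient and invoke Corollary \ref{cor:MAint}—is the right one and is essentially the paper's strategy. But there is a genuine gap in the way you set it up: neither $F$ nor $A$ is a subset of $B$. Recall $B=\{\sum_{\lambda\in\Lambda_0}\alpha^*_\lambda(n)\lambda^n : n\in\N\}$ and $F=\{\sum_{\lambda\in\Lambda_1}\alpha^*_\lambda\lambda^n : n\in\N\}$; these are determined by disjoint sets of roots and in general $F\not\seq B$ and $A\seq B+F\not\seq B$. The universe of $B^{\qf}_{\cK}$ is $B$, so ``expanding $B^{\qf}_{\cK}$ by constants for each element of $F$'' does not parse (constants must name elements of the universe), a unary predicate for $A$ on $B$ does not parse, and the set $X=\{(b,f)\in B\times F : b+f\in A\}$ is not a subset of any $B^n$. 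Consequently your $E$ is not a relation on a definable subset of $\cM^{\eq}$, and Corollary \ref{cor:MAint} cannot be applied as stated.

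The paper repairs exactly this point by introducing a finite set $F_0\seq B$ with $|F_0|=|F|$ (possible since $B$ is infinite) and a bijection $\sigma\colon F_0\to F$. Then $D=B\times F_0\seq B^2$ is genuinely $B^{\qf}_{\cK}$-definable of $U$-rank $1$, the relations $\sum_i c_i(b_i+\sigma(f_i))=r$ become $B^{\qf}_{\cK}$-definable because they decompose into finitely many cases over $\bar f\in F_0^k$, each a quantifier-free $\cL_K$-condition on $B^k$, and the equivalence relation $b_1+\sigma(f_1)=b_2+\sigma(f_2)$ is $D_{(1,1,\nv1,\nv1);0}$. The paper also does not try to name $A$ by a predicate: it first shows $\cM:=(B+F)^{\qf}_{\cK}$ is mutually algebraic by the interpretation just described, and only then passes to $A\seq B+F$ by observing $A^{\qf}_{\cK}$ is a reduct of the $\cM$-induced structure $A_{\cM}$, which is mutually algebraic by Theorem \ref{thm:unary}. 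If you insert the $F_0/\sigma$ device and route the last step through $(B+F)^{\qf}_{\cK}$ and its induced structure on $A$ rather than through a (malformed) predicate for $A$, your argument becomes the paper's proof.
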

\begin{proof}
Let $\cM=(B+F)^{\qf}_{\cK}$. Then $A_{\cK}^{\qf}$ is a reduct of $A_{\cM}$, and so, as in the proof of Theorem \ref{thm:MAG-main}, it suffices to show that $\cM$ is mutually algebraic. Fix a finite set $F_0\seq B$ with $|F|=|F_0|$, and let $\sigma\colon F_0\to F$ be a bijection. Let $D=B\times F_0$, and note that $D\seq B^2$ is $B^{\qf}_{\cK}$-definable of $U$-rank $1$. Given $k\geq 1$, $\cbar\in\{1,\nv 1\}^k$, and $r\in K$, define
\[
D_{\cbar;r}=\left\{((b_1,f_1),\ldots,(b_k,f_k))\in D^k:\sum_{i=1}^k c_i(b_i+\sigma(f_i))=r\right\}.
\]
Then, for any $k\geq 1$, $\cbar\in\{1,\nv 1\}^k$, and $r\in K$, we have
\[
D_{\cbar;r}=\bigcup_{\bar{f}\in F_0^k}\left\{((b_1,f_1),\ldots,(b_k,f_k)):\sum_{i=1}^k c_ib_i=r-\sum_{i=1}^kc_i\sigma(f_i)\right\},
\]
and so $D_{\cbar;r}$ is $B^{\qf}_{\cK}$-definable. Moreover, the equivalence relation $E$ on $D$ given by $b_1+\sigma(f_1)=b_2+\sigma(f_2)$ is $B^{\qf}_{\cK}$-definable by $D_{(1,1,\nv 1,\nv 1);0}$, and any $D(\cbar;r)$ is $E$-invariant. Finally, $\cM$ is clearly interdefinable with the structure with universe $D/E$ and relations for $D(\cbar;r)/E$, for any $k\geq 1$, $\cbar\in\{1,\nv 1\}^k$, and $r\in K$. So $\cM$ is mutually algebraic by Lemma \ref{lem:BMA} and Corollary \ref{cor:MAinter}.
\end{proof}

As before, Corollary \ref{cor:AMA}, Theorem \ref{thm:MAG-main}, and Remark \ref{rem:subgroup} yield Theorem \ref{thm:LRR}.

\begin{remark}\label{rem:LRRSS}
Suppose $(G,+)$ is a weakly minimal subgroup of $(\Q^{\alg},+)$, $A\seq \Q^{\alg}$ is enumerated by a  linear recurrence relation, and no repeated root of $p_A(x)$ is a root of unity. Via Remark \ref{rem:SS} and Corollary \ref{cor:interpretG}$(b)$, Corollary \ref{cor:AMA} implies that $\Sigma_n^{\pm}(A)$ has upper Banach density $0$ (in $G$) for all $n\geq 1$.  In fact, if $p_A(x)$ is separable then one can use Theorem \ref{thm:ESS} to show that  for any $n\geq 1$, $\Sigma^{\pm}_n(A)$ does not contain arbitrarily large finite arithmetic progressions (see \cite[Remark 3.6]{CoGG}). 
\end{remark}

\begin{remark}\label{rem:LRRstableunity}
A root of unity appearing as a repeated root of $p_A(x)$ does not necessarily mean $(G,+,A)$ is unstable. For example, $\Z=\text{LRR}(2,0,\nv 1,0;0,0,1,\nv 1)$, which has characteristic polynomial $(x-1)^2(x+1)^2$. This situation  would likely be clarified by focusing on recurrence relations which are \emph{non-degenerate}, i.e., there do not exist distinct roots $\mu_i$ and $\mu_j$ of $p_A(x)$ such that $\mu_i/\mu_j$ is a root of unity. In general, any recurrence relation  can be  effectively partitioned into finitely many non-degenerate pieces (see \cite[Theorem 1.2]{EPSWbook}). Note also that if $A$ is non-degenerate and some root $\mu$ of $p_A(x)$ is a root of unity, then $\mu$ is the unique such root and $\mu\in\{1,\nv 1\}$.
A tentative conjecture is that if $A\seq \Z$ is enumerated by a linear recurrence relation as above, and some repeated root of $p_A(x)$ is a root of unity, then either $(\Z,+,A)$ is unstable or $A$ is degenerate.
\end{remark}

Finally, we point out that the restriction to $\Q^{\alg}$ in Theorem \ref{thm:LRR} is due to the fact that the work of Schlickewei and Schmidt from \cite{SchSchPEE} (namely, Theorem \ref{thm:QLT} above) applies only to number fields. Suppose instead that we have a set $A$ enumerated by a recurrence relation as above, but with $a_0,\ldots,a_{d-1},\beta_1,\ldots,\beta_d$ in an arbitrary algebraically closed field $\K$ of characteristic $0$. In order to carry out the work in this section, one would need a version of Theorem \ref{thm:QLT}, where $O_{K,m,k,\delta_1,\ldots,\delta_m}(1)$ is replaced by some bound depending only on $k$, $m$, and $A$. Such a result is known to hold in the case that $p_A(x)$ is separable, due to various ``specialization" techniques (see \cite{SchLRS}). On the other hand, we can use Theorem \ref{thm:ESS}, and arguments similar to the proof of Theorem \ref{thm:FGM}, to give a more direct argument. 

\begin{theorem}\label{thm:LRRC}
Let $\K$ be an algebraically closed field of characteristic $0$, and let $\cG=(G,+)$ be a weakly minimal subgroup of the additive group $\cK$  of $\K$. Fix $A\seq G$ enumerated by a linear homogeneous recurrence relation with constant coefficients in $\K$ and separable characteristic polynomial. Then, for any finite $F\subset G$ and any $B\seq A+F$, $(\cG,B)$ has nfcp and is superstable of $U$-rank at most $\omega$.
\end{theorem}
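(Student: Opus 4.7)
The plan is to adapt the proof of Theorem \ref{thm:LRR} by substituting Theorem \ref{thm:ESS} for Theorem \ref{thm:QLT}. The separability of $p_A(x)$ is exactly what permits this substitution: in the closed-form expression $a_n = \sum_{i=1}^{d_*}\alpha_i(n)\mu_i^n$, the multiplicity of each $\mu_i$ is $1$, forcing each $\alpha_i(x)$ to be a nonzero constant $\alpha_i \in \K^*$. Hence the partial-sum equations we need to control are linear (not polynomial-exponential) in variables drawn from a finite-rank multiplicative subgroup of $\K^*$, exactly the setting of Theorem \ref{thm:ESS}.

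With $a_n = \sum_i \alpha_i \mu_i^n$ in hand, I would partition $\{\mu_1,\ldots,\mu_{d_*}\} = \Lambda_0 \cup \Lambda_1$ according to whether $\mu_i$ is a root of unity, and define $B = \{\sum_{\lambda\in\Lambda_0}\alpha_\lambda\lambda^n : n\in\N\}$ and $F = \{\sum_{\lambda\in\Lambda_1}\alpha_\lambda\lambda^n : n\in\N\}$, so that $A \seq B+F$ and $F$ is finite (the constants $\alpha_\lambda$ together with the finiteness of $\{\lambda^n : n \in \N\}$ for each $\lambda \in \Lambda_1$ make $F$ finite). Setting $\Phi$ to be the set of maps $x \mapsto c\alpha_\lambda \lambda^x$ with $\lambda \in \Lambda_0$ and $c \in \{1,\nv 1\}$, the heart of the argument is to show that $\N^\Phi_\cK$ is mutually algebraic, where $\cK = (\K, +)$.

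For this, I would directly verify that $\N$ has the ESS property with respect to $\Phi$ and $\cK$ by taking $U_k = V_k = \{0\}$ and applying Theorem \ref{thm:ESS}. Given $\bar{\varphi} = (c_t \alpha_{\lambda_t}\lambda_t^x)_{t \leq k} \in \Phi^k$ and $r \neq 0$, the substitution $y_t := \lambda_t^{n_t}$ transforms
\[
\sum_{t=1}^k c_t\alpha_{\lambda_t}\lambda_t^{n_t} = r,\quad \sum_{t\in I}c_t\alpha_{\lambda_t}\lambda_t^{n_t} \neq 0 \text{ for all nonempty } I\subsetneq[k]
\]
into a linear equation in $(y_t)\in \Gamma^k$ with the analogous nonvanishing condition, where $\Gamma\le\K^*$ is the finite-rank multiplicative subgroup generated by $\Lambda_0$. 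Theorem \ref{thm:ESS} bounds the number of such $(y_t)$ by a constant depending only on $k$ and the rank of $\Gamma$; since no $\lambda \in \Lambda_0$ is a root of unity, $n_t \mapsto \lambda_t^{n_t}$ is injective, and the same bound controls $|\N_0(\bar{\varphi};r)|$. Proposition \ref{prop:ESSMA} then yields that $\N^\Phi_\cK$ is mutually algebraic.

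With $\N^\Phi_\cK$ mutually algebraic, the interpretation arguments used to prove Lemma \ref{lem:BMA} and Corollary \ref{cor:AMA} are purely model-theoretic, depending only on $F$ being finite and on Corollary \ref{cor:MAint}, and so transfer verbatim to give that $A^{\qf}_\cK$ is mutually algebraic. The conclusion then follows from Theorem \ref{thm:MAG-main} via Remark \ref{rem:subgroup}. I do not anticipate a serious obstacle: the technical machinery of Section \ref{sec:ESS} is already in place, and the simplification from polynomial to constant coefficients makes the application of Theorem \ref{thm:ESS} essentially immediate, with the remaining steps following the template of Section \ref{sec:LRR}.
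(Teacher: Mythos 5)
Your proposal is correct and follows essentially the same route as the paper: separability forces each $\alpha_i$ to be a nonzero constant, the substitution $n_t\mapsto\lambda_t^{n_t}$ (injective because no $\lambda\in\Lambda_0$ is a root of unity) carries the solution sets into a finite-rank subgroup of $(\K^*)^k$ where Theorem \ref{thm:ESS} applies, and the conclusion is then assembled exactly as in Lemma \ref{lem:BMA}, Corollary \ref{cor:AMA}, Theorem \ref{thm:MAG-main}, and Remark \ref{rem:subgroup}. The only (immaterial) difference is that you certify mutual algebraicity of $\N^{\Phi}_{\cK}$ by verifying the ESS property with $U_k=V_k=\{0\}$ and invoking Proposition \ref{prop:ESSMA}, whereas the paper bypasses that machinery and checks directly, as in the proof of Theorem \ref{thm:FGM}, that each $\N_0(\bar{\varphi};r)$ is a mutually algebraic subset of $\N^k$.
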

\begin{proof}
We use the same notation for $A$ as above, but with $\Q^{\alg}$ replaced by $\K$. Since $p_A(x)$ is separable, we have $d_*=d$. Moreover, for all $i\in[d]$, $\alpha_i(x)$ is a constant $\alpha_i\in\K^*$, which we also denote by $\alpha^*_{\mu_i}$. Let $\Lambda=\{\mu_1,\ldots,\mu_d\}$, and partition $\Lambda=\Lambda_0\cup\Lambda_1$ as above. Let $\Phi$ denote the set of functions from $\N$ to $\K$ of the form $x\mapsto c\alpha^*_\lambda\lambda^x$ for some $\lambda\in\Lambda_0$ and $c\in\{1,\nv 1\}$.  If we can show that $\N^{\Phi}_{\cK}$ is mutually algebraic, then the rest of the proof follows as above.

To show that $\N^{\Phi}_{\cK}$ is mutually algebraic, we fix $k\geq 1$, $\bar{\varphi}\in\Phi^k$, and $r\in\K$, and show that $\N_0(\bar{\varphi};r)$ is mutually algebraic as a $k$-ary relation on $\N$. Let $\bar{\varphi}=(\varphi_1,\ldots,\varphi_k)$ where $\varphi_i\colon x\mapsto c_i\alpha^*_{\lambda_i}\lambda_i^x$ for $\lambda_i\in\Lambda_0$ and $c_i\in\{1,\nv 1\}$. Let $\Gamma$ be the subgroup of $\K^*$ generated by $\lambda_1,\ldots,\lambda_k$, and let $\Delta$ be the set of $\xbar\in \Gamma^k$ such that $\sum_{i=1}^kc_i\alpha^*_{\lambda_i}x_i=r$ and $\sum_{i\in I}c_i\alpha^*_{\lambda_i}x_i\neq 0$ for all nonempty $I\subsetneq[k]$. Then the map $\sigma\colon \nbar\to (\lambda_1^{n_1},\ldots,\lambda_k^{n_k})$ is well-defined from $\N_0(\bar{\varphi};r)$ to $\Delta$, and  injective since no $\lambda_i$ is a root of unity. So it suffices to show $\Delta$ is mutually algebraic as a $k$-ary relation on $\Gamma$. This follows from Theorem \ref{thm:ESS} exactly as in the proof of Theorem \ref{thm:FGM}.  
\end{proof}

\begin{remark}
A recurrence sequence $(a_n)_{n=0}^\infty$ as above can be extended to to $(a_n)_{n\in\Z}$ using the same recurrence relation, and the representation of $a_n$ using the roots of $p_A(x)$ still holds. Thus the analogues of Theorems \ref{thm:LRR} and \ref{thm:LRRC} hold for a set $A\seq G$ enumerated in this fashion as well. In the proofs one only needs to replace $\N^{\Phi}_{\cK}$ by $\Z^{\Phi}_{\cK}$, where the maps in $\Phi$ are extended to $\Z$ in the obvious way.
\end{remark}

\bibliographystyle{amsplain}
\end{document}